\tikzset{liltext/.style={font=\tiny}}
\renewcommand{\quote}[1]{``#1''}
\newcommand{\beq}{\begin{equation}}
\newcommand{\eeq}{\end{equation}}
\newcommand{\beqst}{\begin{equation*}}
\newcommand{\eeqst}{\end{equation*}}
\newcommand{\la}{\lambda}
\newcommand{\al}{\alpha}
\newcommand{\ba}{\beta}
\newcommand{\si}{\sigma}
\newcommand{\de}{\delta}
\newcommand{\ga}{\gamma}
\newcommand{\om}{\omega}
\newcommand{\ta}{\tau}
\newcommand{\ve}{\varepsilon}
\newcommand{\La}{\Lambda}
\newcommand{\Om}{\Omega}
\newcommand{\Th}{\Theta}
\newcommand\lap{\Delta}
\newcommand{\pa}{\partial}
\newcommand{\iy}{\infty}
\newcommand{\ra}{\rightarrow}
\newcommand{\R}{\mathbb{R}}
\DeclareMathOperator{\diag}{diag}
\DeclareMathOperator{\supp}{supp}
\newcommand\Zhyp{Z_{{hyp}}}
\newcommand\Zosc{Z_{{osc}}}
\newcommand\Zred{Z_{{red}}}
\newcommand\Zell{Z_{{ell}}}
\newcommand\Zdiss{Z_{{diss}}}
\newcommand{\jpnxi}{\langle\xi\rangle_{\la(t),\om(t)}}
\newcommand{\jpn}{\langle\xi\rangle}
\newcommand\D{{\mathcal D}}
\newcommand{\smallO}[1]{\mathop{}\!o{\big(#1\big)}}
\def\fu{\hat u}
\def\om{\omega}
\def\ta{\tau}
\def\D{\mathcal{D}}
\def\R{\mathcal{R}}
\def\Q{\mathcal{Q}}
\def\E{\mathcal{E}}
\def\th{\theta}
\def\Om{\Omega}
\begin{document}
\mainmatter              
\title{The influence of oscillations on energy estimates for damped wave models with time-dependent propagation speed and dissipation}
\titlerunning{The influence of oscillations on energy estimates for damped wave models}  
%
\author{Halit Sevki Aslan$^1$ \and Michael Reissig$^{*1}$}
\authorrunning{H. S. Aslan and M. Reissig} 
\institute{$ ^{1}$ Faculty for Mathematics and Computer Science, Technical University Bergakademie Freiberg, Pr\"uferstr 9 - 09596 Freiberg - Germany \\
\email{$ ^{*}$reissig@math.tu-freiberg.de} }
\maketitle              

\begin{abstract}
The aim of this paper is to derive higher order energy estimates for solutions to the Cauchy problem
for damped wave models with time-dependent propagation speed and dissipation. The model of interest is
\begin{align*}
\begin{cases}
u_{tt}-\la^2(t)\om^2(t)\lap u +\rho(t)\om(t)u_t=0, & (t,x)\in[0,\infty)\times \mathbb{R}^n, \\
u(0,x)=u_0(x), \,\,\,\, u_t(0,x)=u_1(x), & x\in\mathbb{R}^n,
\end{cases}
\end{align*}
The coefficients $\la=\la(t)$ and $\rho=\rho(t)$ are shape functions and $\om=\om(t)$ is a bounded oscillating
function. If $\om(t)\equiv1$ and $\rho(t)u_t$ is an effective dissipation term, then $L^2-L^2$ energy
estimates are proved in \cite{BuiReiBook}. In contrast, the main goal of the present paper is to generalize
the previous results to coefficients including an oscillating function in the time-dependent coefficients.
We will explain how the interplay between the shape functions and oscillating behavior of the coefficient $\om=\om(t)$
will influence energy estimates.
\keywords{Damped wave model, effective dissipation, very fast oscillations, stabilization condition, WKB analysis, energy estimate}
\end{abstract}
\section{Introduction}
Let us consider the following Cauchy problem for the damped wave equation with time-dependent propagation speed and dissipation:
\begin{align} \label{Cauchy.Prob.Bui}
\begin{cases}
u_{tt}-a^2(t)\lap u+b(t)u_t=0, & (t,x)\in[0,\infty)\times \mathbb{R}^n, \\
u(0,x)=u_0(x), \,\,\, u_t(0,x)=u_1(x), & x\in\mathbb{R}^n.
\end{cases}
\end{align}
The total energy of the solution to \eqref{Cauchy.Prob.Bui} is defined by
\[ \mathbb{E}(u,t)=\frac{1}{2}\int_{\mathbb{R}^n}\big( a^2(t)|\nabla u(t,x)|^2+|u_t(t,x)|^2 \big)dx. \]
The properties of the time-dependent propagation speed $a=a(t)$ and the coefficient $b=b(t)$ in the damping term have different effects on the behavior of $\mathbb{E}(u,t)$ as $t\ra\iy$. \\
First we consider the properties of the energy in the case  $b(t)\equiv0$ in \eqref{Cauchy.Prob.Bui}. If $0<a_0\leq a(t)\leq a_1$, in general one cannot expect the so-called \textit{generalized energy conservation} property (GEC), that is, the both sided estimate
\[ C_0\mathbb{E}(u,0)\leq\mathbb{E}(u,t)\leq C_1\mathbb{E}(u,0) \]
holds for the wave energy with positive constants $C_0$ and $C_1$. The main reason is, that an oscillating behavior of $a=a(t)$ may have a very deteriorating influence on the energy behavior of the solutions to \eqref{Cauchy.Prob.Bui} (see \cite{Col.06}, \cite{ReiYag00}). However, if we assume for the oscillating behavior of $a=a(t)$ the assumptions
\[ |a^{(k)}(t)|\leq C_k(1+t)^{-k} \,\,\, \text{for} \,\,\, k=0,1,2, \]
then (GEC) holds (see \cite{ReiSmi05}). In this case only very slow oscillations are allowed. In general, very fast oscillating  coefficients one might expect to destroy the estimates, which are valid  for very slow oscillating  coefficients. However, in \cite{Hir07} even though the oscillations are very fast, the author has proved (GEC) to \eqref{Cauchy.Prob.Bui} under the following additional assumptions to the coefficient:
\begin{equation}
|a^{(k)}(t)|\leq C_k(1+t)^{-kr} \,\,\, \text{for} \,\,\, k=0,1,\cdots,M,
\end{equation}
and
\begin{equation} \label{Stab.Cond.Hir.}
\int_{0}^{t}|a(\ta)-a_{\iy}|d\ta\leq C(1+t)^q,
\end{equation}
for some real $a_\iy$ and $q\in(0,1)$ with $r>q+\dfrac{1-q}{M}$. Here \eqref{Stab.Cond.Hir.} is called the \textit{stabilization condition} (see \cite{Hir07}) and by this condition one can get some benefit of higher order regularity of $a=a(t)$. Moreover, in the case  $b(t)\equiv0$ in the paper \cite{H-W09} the authors studied the Cauchy problem \eqref{Cauchy.Prob.Bui} after introducing $a(t)=\la(t)\om(t)$ with the monotonously increasing shape function $\la=\la(t)$ and the (bounded) oscillating function $\om=\om(t)$. By using the $\mathcal{C}^M$ property of $\la=\la(t)$ and $\om=\om(t)$ and the idea of stabilization condition they proved a two sided estimate
\[ C_0\leq \frac{1}{\la(t)}\mathbb{E}_\la(u,t)\leq C_1, \]
where the non-negative constants $C_0$ and $C_1$ depend on the data and
\[ \mathbb{E}_\la(u,t)=\frac{1}{2}\int_{\mathbb{R}^n}\big( \la^2(t)|\nabla u(t,x)|^2+|u_t(t,x)|^2 \big)dx. \]
Now we discuss the Cauchy problem \eqref{Cauchy.Prob.Bui} in the case of constant speed of propagation $a(t)\equiv1$. Assuming a suitable control of the oscillations in $b=b(t)$, the following classification of damping terms $b(t)u_t$ is proposed in \cite{Wirth.non-eff} and \cite{Wirth.eff.}: \textit{non-effective dissipation, effective dissipation, scattering producing} and \textit{over-damping producing}.
\par Finally, let us consider the Cauchy problem \eqref{Cauchy.Prob.Bui} with time-dependent propagation speed and dissipation. In \cite{BuiReiBook} the authors studied the Cauchy problem \eqref{Cauchy.Prob.Bui} assuming a suitable control of the oscillations of $a=a(t)$ and $b=b(t)$. They proposed a classification of the damping term $b(t)u_t$ in terms of an increasing speed of propagation $a=a(t)$, given in \cite{Wirth.non-eff} and \cite{Wirth.eff.}. For the effective dissipation case, the authors proved the energy of the solution to \eqref{Cauchy.Prob.Bui} satisfies the following $L^2-L^2$ estimate for all $t>0$:
\begin{align*}
\big\| u_t(t,\cdot),a(t)\nabla u(t,\cdot) \big\|_{L^2} \lesssim a(t)\Big( 1+\int_{0}^{t}\frac{a^2(\ta)}{b(\ta)}d\ta \Big)^{-\frac{1}{2}}\big( \|u_0\|_{H^1}+\|u_1\|_{L^2} \big).
\end{align*}
The main goal of this paper is to derive higher order energy estimates for solutions of the Cauchy problem \eqref{Cauchy.Prob.Bui} with time-dependent speed of propagation and time-depending damping term both having a time-dependent oscillation term. For this reason, we assume that the coefficients $a=a(t)$ and $b=b(t)$ in \eqref{Cauchy.Prob.Bui} can be represented by the following products:
\begin{equation*}
a(t)=\la(t)\om(t) \quad \text{and} \quad b(t)=\rho(t)\om(t).
\end{equation*}
Here the time-dependent functions $\la=\la(t)$, $\rho=\rho(t)$ and $\om=\om(t)$ are smooth and strictly positive functions. Thus, in this paper we devote ourselves to the following Cauchy problem:
\begin{align} \label{Equ.first.Cauchy}
\begin{cases}
u_{tt}-\la^2(t)\om^2(t)\lap u +\rho(t)\om(t)u_t=0, & (t,x)\in[0,\infty)\times \mathbb{R}^n, \\
u(0,x)=u_0(x), \,\,\,\, u_t(0,x)=u_1(x), & x\in\mathbb{R}^n,
\end{cases}
\end{align}
where $\la$ is a monotonously increasing nontrivial shape function in the propagation speed, $\rho$ is a nontrivial shape function in the damping term and $\om$ is a bounded oscillating function in both propagation speed and damping term. Throughout this paper, we restrict ourselves to ``effective-like'' (due to oscillating term $\om=\om(t)$) damping $\rho(t)\om(t)u_t$ in the sense of \cite{BuiReiBook} and \cite{Wirth.eff.}. Here effective means that the solution behaves like that of a corresponding heat equation.
\begin{remark}
We assume that the damped wave equation \eqref{Equ.first.Cauchy} has the same oscillating function $\om=\om(t)$ in both speed of the propagation and damping term. Only for this model we are able to develop a suitable approach basing on tools from WKB-analysis. In particular, the division of the extended phase space into regions and zones
(see Section \ref{Sec3}) with reasonable properties of the separating lines depends heavily on the coincidence of the oscillating part in the propagation speed and the dissipation term.
\end{remark}
Then, the main results of this paper are the following estimates for Sobolev solutions to the Cauchy problem \eqref{Equ.first.Cauchy}.
\begin{theorem} [Main Theorem] \label{Par.Dep.Final.Thm.with.s=0}
We assume that $\la=\la(t)$, $\rho=\rho(t)$ and $\om=\om(t)$ satisfy the conditions (A1) to (A5) and (B1) to (B6) (see below). Then, the Sobolev solutions to the Cauchy problem \eqref{Equ.first.Cauchy} satisfy the following estimates with $m\in[1,2)$ and $\sigma\geq 0$:
\begin{align*}
\|u(t,\cdot)\|_{\dot{H}^{\sigma}} &\lesssim \big( 1+B_\la(0,t) \big)^{-\frac{\sigma}{2}-\frac{n}{2}(\frac{1}{m}-\frac{1}{2})}\big( \|u_0\|_{L^m\cap H^{\sigma}}+\|u_1\|_{L^m\cap H^{[\sigma-1]_+}} \big), \\
\|u_t(t,\cdot)\| _{\dot{H}^{\sigma}} &\lesssim \la(t)
\big( 1+B_\la(0,t) \big)^{-\frac{\sigma}{2}-\frac{n}{2}(\frac{1}{m}-\frac{1}{2})-\frac{1}{2}}\big( \|u_0\|_{L^m\cap H^{\sigma+1}}+\|u_1\|_{L^m\cap H^{\sigma}} \big),
\end{align*}
where $B_\la(0,t):=\int_{0}^{t}\frac{\la^2(\ta)}{\rho(\ta)}d\ta$.
\end{theorem}
\begin{remark}
The estimates for $\|u(t,\cdot)\|_{\dot{H}^{\sigma}}$ from Theorem \ref{Par.Dep.Final.Thm.with.s=0} coincide with the ones for solutions to the Cauchy problem \eqref{Equ.first.Cauchy} with $\omega \equiv 1$. For this reason different influences of the conditions for $\omega$ on the estimates for solutions and derivatives (bad influence of the  oscillating behavior (A2) and improving influence of the stabilization condition (A3)) are in equilibrium. Some differences in the decay rate appear in the estimates for  $\|u_t(t,\cdot)\|_{\dot{H}^{\sigma}}$. Here we feel some bad influence of $\omega$.
\end{remark}
The content of the paper is organized as follows:
\begin{itemize}
\item In Section 2 we will introduce our assumptions for the coefficients $\la=\la(t)$, $\rho=\rho(t)$ and $\om=\om(t)$.
\item In Section 3 we apply (hyperbolic and elliptic) WKB-analysis to get representations of solutions. For this reason we divide the extended phase space into some zones and transform the second order equation to a system of first order for suitable micro-energies. This gives us precise information on the structure of fundamental solutions in different parts of the extended phase space.
\item Section 4 is devoted to glue the estimates for the fundamental solutions from different parts of the extended phase space.
\item In Section 5 we derive some estimates for large and small frequencies, which prove the main theorem of the Cauchy problem \eqref{Equ.first.Cauchy}.
\item In Section 6 we explain our assumptions and the main results for some examples of admissible coefficients.
\item Some concluding remarks and open problems in Section 7 complete the paper.
\end{itemize}
\paragraph{Notations} In this paper we use $f\lesssim g$ for positive functions $f$ and $g$ if there exists a constant $C>0$ such that the estimate $f\leq Cg$ is valid. Moreover, $f\approx g$ denotes if $f$ and $g$ satisfy$f\lesssim g$ and  $g\lesssim f$. $f=\smallO{g}$ denotes that $\lim\sup_{x\ra\iy}\big|\frac{f(x)}{g(x)}\big|=0$.  $(|\cdot|)$ denotes for a matrix the matrix of the absolute values of its entries. We introduce $[x]_+:=\max\{x,0\}$. Finally, by $H^\si$ and $\dot{H}^{\si}$, we denote Bessel and Riesz potential spaces based on $L^2$, respectively.
\section{Assumptions}
We assume the following conditions for $\la=\la(t)$ and $\om=\om(t)$ belonging to $\mathcal{C}^M(\mathbb{R}_+)$ with $M\geq 2$ (here we follow some ideas of \cite{H-W09}):
\begin{enumerate}
    \item [\textbf{(A1)}] $\la(t)>0$ and $\la'(t)>0$ for all times $t>0$ and the derivatives of $\la$ satisfy the conditions
    \[ \la_0\frac{\la(t)}{\La(t)}\leq\frac{\la'(t)}{\la(t)}\leq \la_1\frac{\la(t)}{\La(t)},\quad |d_t^k\la(t)|\leq \la_k\la(t)\Big( \frac{\la(t)}{\La(t)} \Big)^k, \,\,\, k=1,2,\cdots,M, \]
    where $\la_0$ and all $\la_k$ are positive constants and $\La(t)=1+\int_{0}^{t}\la(\ta)d\ta$ is a primitive of $\la(t)$;
    \item [\textbf{(A2)}] $0<c_0\leq\om(t)\leq c_1$ and the derivatives of $\om$ satisfy the conditions
    \[ |d_t^k\om(t)|\leq\om_k \Xi^{-k}(t), \,\,\, k=1,2,\cdots,M, \]
    where all $\om_k$ are positive constants and $\Xi=\Xi(t)$ is a positive, monotonous and continuous function satisfying the compatibility condition
    \[ C_1\Th(t)\leq\la(t)\Xi(t)\leq C_2\La(t),\]
    where $\Th=\Th(t)$ is a strictly increasing continuous function  with $\Th(0)=1$, $\Th(t)<\La(t)$ for $t>0$ and $\Th(t)=\smallO{\La(t)}$;
	\item [\textbf{(A3)}] $\om=\om(t)$ is $\la$-stabilizing towards $1$, that is,
	\[ \int_{0}^{t} \la(\ta)|\om(\ta)-1|d\ta \leq C_3\Th(t); \]
	\item [\textbf{(A4)}] for  $M \geq 2$ the following estimate holds:
	\[ \int_{t}^{\iy}\la^{-M}(\ta)\Xi^{-M-1}(\ta)d\ta\leq C_4\Th^{-M}(t); \]
    \item [\textbf{(A5)}] the function $F=F(\La(t))$ is defined by
    \[ \Xi(t)=\frac{F(\La(t))}{\la(t)\sqrt{F'(\La(t))}}. \]
    This implies that
    \[ \frac{1}{F\big( \La(t) \big)} = \int_{t}^{\iy} \la^{-1}(\ta)\Xi^{-2}(\ta)d\ta, \]
    where $F(\La(t))\ra\iy$ as $t\ra\iy$. Here we suppose that the right-hand side exists for all $t \geq 0$.
\end{enumerate}
Cauchy problems with the increasing speed of propagation have been considered in \cite{ReiYag00Lp-Lq}. Following their approach the assumption (A1) is standard. The assumption (A3), which is introduced as a \textit{stabilization condition}, allows us to control a certain amount of very fast oscillations. Especially, due to the deteriorating influence of oscillations on solutions by the aid of this stabilization condition and higher order regularity of the time-dependent coefficients may be compensated ``bad behavior'' of the very fast oscillations. For this reason, this stabilization condition describes an error made from the oscillating behavior of the coefficient $\om=\om(t)$.
\begin{remark}
If we consider very slow oscillations (according to the definitions in \cite{ReiYag00}), that is,  $\Th(t)\equiv\La(t)$ and $F(\La(t))\equiv\La(t)$ the assumptions (A3) to (A5) trivially hold and by these choices the stabilization condition disappears. Hence, the stabilization condition (A3) has a meaning only in the case $M\geq 2$.
\end{remark}
Now motivated by the considerations from \cite{Buithesis} and \cite{BuiReiBook}, in order to study the interaction between the shape functions $\la=\la(t)$, $\rho=\rho(t)$ and the oscillating function $\om=\om(t)$ we assume the following conditions:
\begin{enumerate}
	\item [\textbf{(B1)}] $\rho(t)>0, \quad \rho(t)=\mu(t)\dfrac{\la(t)}{\La(t)}$;
	\item [\textbf{(B2)}] $\big|d_t^k\mu(t)\big|\leq \mu_k\mu(t)\Big( \dfrac{\la(t)}{\La(t)} \Big)^k$ for $k=1,2,\cdots,M$,
    where all $\mu_k$ are
    positive constants;
	\item [\textbf{(B3)}] $\dfrac{\mu(t)}{\La(t)}$ is monotonic and $\mu(t)\ra\iy$ for $t\ra\iy;$
	\item [\textbf{(B4)}] $\dfrac{\la^2(t)}{\rho(t)}=\dfrac{\la(t)\La(t)}{\mu(t)}\notin L^1\mathbb{(R_+)};$
	\item [\textbf{(B5)}] $\big|\big( \rho(t)\om(t) \big)'\big|=\smallO{\left( \rho(t)\om(t) \right)^2}$ as $t\ra \iy$,
    this implies that $\mu(t)\dfrac{\Th(t)}{\La(t)}\ra \iy$ as $t\ra \iy$;
    \item [\textbf{(B6)}] $\displaystyle\int_{0}^{t}\frac{\la^2(\ta)}{\rho(\ta)}d\ta \leq C_5 F^2(\La(t))$,
    where $C_5$ is a positive constant.
\end{enumerate}
The assumptions (B3) and (B4) describe the effective damping case related to a given increasing propagation speed. In particular, (B4) excludes the over-damping case (see \cite{BuiReiBook}). The assumption (B5) allows us to control a certain amount of very fast oscillations in the damping term $\rho(t)\om(t)u_t$.
\section{Representation of solutions} \label{Sec3}
We perform the partial Fourier transformation $\fu(t,\xi)=F_{x\ra\xi}(u)(t,\xi)$ to \eqref{Equ.first.Cauchy} with respect to spatial variables. Then, we have
\begin{align} \label{Eq.first.Fourier}
\begin{cases}
\fu_{tt}+\la^2(t)\om^2(t)|\xi|^2\fu+\rho(t)\om(t)\fu_t=0, & (t,\xi)\in[0,\infty)\times \mathbb{R}^n, \\
\fu(0,\xi)=\fu_0(\xi), \quad \fu_t(0,\xi)=\fu_1(\xi),   & \xi\in\mathbb{R}^n.
\end{cases}
\end{align}
Applying the transformation
\begin{equation*}
\fu(t,\xi) = \exp\Big( -\frac{1}{2}\int_{0}^{t}\rho(\ta)\om(\ta)d\ta \Big)v(t,\xi),
\end{equation*}
transfers the Cauchy problem \eqref{Eq.first.Fourier} into
\begin{align} \label{after.dissipative}
\begin{cases}
v_{tt}+m(t,\xi)v=0, & (t,\xi)\in[0,\infty)\times \mathbb{R}^n, \\
v(0,\xi)=v_0(\xi), \,\,\, v_t(0,\xi)=v_1(\xi),   & \xi\in\mathbb{R}^n,
\end{cases}
\end{align}
where
\[ v_0(\xi)=\fu_0(\xi) \quad \mbox{and} \quad v_1(\xi)=\frac{\rho(0)\om(0)}{2}\fu_0(\xi)+\fu_1(\xi). \]
The coefficient $m=m(t,\xi)$ of the mass term is defined by
\begin{equation} \label{m(t,xi)}
m(t,\xi):=\la^2(t)\om^2(t)|\xi|^2-\frac{1}{4}\big( \rho(t)\om(t) \big)^2-\frac{1}{2}\big( \rho(t)\om(t) \big)'.
\end{equation}
Due to the assumptions (B2), (B3) and (B5) we see that $\big( \rho(t)\om(t) \big)'$ is a negligible term in \eqref{m(t,xi)}, that is, it holds $\big|\big(\rho(t)\om(t)\big)'\big|=\smallO{\left(\rho(t)\om(t)\right)^2}$ as $t\ra\iy$. Hence, we can introduce a separating curve $\Gamma$ as follows:
\[ \Gamma:=\Big\{ (t,\xi)\in\mathbb{R}_+\times\mathbb{R}^n : |\xi|=\frac{1}{2}\frac{\mu(t)}{\La(t)} \Big\}. \]
This curve divides the extended phase space into two regions, the hyperbolic region $\Pi_{hyp}$ and the elliptic region $\Pi_{ell}$, as follows:
\begin{align*}
\Pi_{hyp} = \bigg\{ (t,\xi) : |\xi|>\frac{1}{2}\frac{\mu(t)}{\La(t)} \bigg\} \quad \text{and} \quad \Pi_{ell} & = \bigg\{ (t,\xi): |\xi|<\frac{1}{2}\frac{\mu(t)}{\La(t)} \bigg\}.
\end{align*}
Let us define the auxiliary weight function
\begin{equation*}
\jpnxi := \sqrt{\Big|\la^2(t)\om^2(t)|\xi|^2 - \frac{\rho^2(t)\om^2(t)}{4}\Big|}.
\end{equation*}
\subsubsection{Division of the extended phase space.}
We divide both regions of the extended phase space into zones in order to organize the necessary steps of WKB-analysis. This procedure was developed in \cite{ReiYag00Lp-Lq} and \cite{Yagd.97}. Here we follow some ideas of \cite{BuiReiBook}. However, we shall restrict the considerations to a smaller hyperbolic zone and elliptic zone in the extended phase space to cope with stronger oscillations in $\om=\om(t)$ in our approach. The zones are defined as follows:
\begin{itemize}
\item hyperbolic zone:
\[ \Zhyp(N)=\Big\{ (t,\xi): \jpnxi\geq N\frac{\rho(t)\om(t)}{2} \Big\}\cap\Pi_{hyp}, \]
and $\Th(t)|\xi|\geq N$;
\item oscillation subzone:
\[ \Zosc(N,\ve)=\Big\{ (t,\xi): \ve\frac{\rho(t)\om(t)}{2}\leq \jpnxi \leq N\frac{\rho(t)\om(t)}{2} \Big\} \cap\Pi_{hyp}, \]
$\Th(t)|\xi|\leq N$ and $\La(t)|\xi|\geq N$;
\item reduced zone:
\[ \Zred(\ve) = \Big\{ (t,\xi): \jpnxi \leq \ve\frac{\rho(t)\om(t)}{2} \Big\}; \]
\item elliptic zone:
\[ \Zell(d_0,\ve) = \Big\{ (t,\xi): |\xi|\geq \frac{d_0}{F\big( \La(t) \big)}\Big\}\cap\Big\{ \jpnxi\geq\ve\dfrac{\rho(t)\om(t)}{2} \Big\}\cap\Pi_{ell}; \]
\item dissipative zone:
\[ \Zdiss(d_0)=\Big\{ (t,\xi): |\xi|\leq \frac{d_0}{F\big( \La(t) \big)} \Big\}\cap\Pi_{ell}. \]
\end{itemize}
Here in general, $N$ is a large positive constant and $\ve$ is a small positive constant. Both will be chosen later.
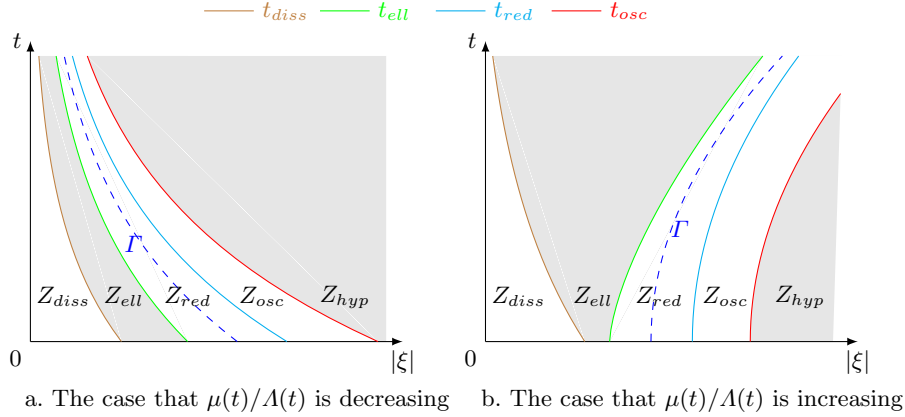
\begin{figure}[H]
	\begin{center}
		\begin{tikzpicture}[>=latex,xscale=1.1]
    \fill[domain=0:3.8, variable=\t,color=black!10!white] plot ({1.1*exp(-\t/1.6)},\t)--(1.1,0) -- (1.9,0)--(1.9, 3.8)--(0.1025,3.8)--cycle;
	\fill[domain=0:3.8,variable=\t,color=white] plot ({1.9*exp(-\t/2.1)},\t)--(1.9,0)--(4.2,0) -- (4.2,3.8) -- (0.311,3.8)--cycle;
	\fill[domain=0:3.8,variable=\t,color=black!10!white] plot ({4.2*exp(-\t/2.1)},\t)--(4.2,0)--(4.3,0) -- (4.3,3.8) -- (0.6875,3.8)--cycle;
	\draw[->] (0,0) -- (4.5,0)node[below]{$|\xi|$};
	\draw[->] (0,0) -- (0,4)node[left]{$t$};
    \node[below left] at(0,0){$0$};
    \draw[-, color=brown]  (2.1,4.4) -- (2.7,4.4)node[right]{$\textcolor{brown}{t_{diss}}$};
	\draw[-, color=green]  (3.5,4.4) -- (4.1,4.4)node[right]{$\textcolor{green}{t_{ell}}$};
    \draw[-, color=cyan]  (4.9,4.4) -- (5.5,4.4)node[right]{$\textcolor{cyan}{t_{red}}$};
    \draw[-, color=red]  (6.3,4.4) -- (6.9,4.4)node[right]{$\textcolor{red}{t_{osc}}$};
	\draw[dashed, domain=0:3.8, color=blue, variable=\t,] plot ({2.5*exp(-\t/2.1)},\t);
	\node[color=blue] at (1.25, 1.3){{\footnotesize $\Gamma$}};
	\draw[domain=0:3.8,color=red,variable=\t] plot ({4.2*exp(-\t/2.1)},\t);
	\node[color=black] at (3.8,0.6){{\footnotesize $Z_{hyp}$}};
	\draw[domain=0:3.8,color=cyan,variable=\t] plot ({3.1*exp(-\t/2.1)},\t);
	\node[color=black] at (2.8, 0.6){{\footnotesize $Z_{osc}$}};
	\node[color=black] at (1.9, 0.6){{\footnotesize $Z_{red}$}};
	\draw[domain=0:3.8,color=green,variable=\t] plot ({1.9*exp(-\t/2.1)},\t);
	\draw[domain=0:3.8,color=brown,variable=\t] plot ({1.1*exp(-\t/1.6)},\t);
	\node[color=black] at (1.13,0.6){{\footnotesize $Z_{ell}$}};
	\node[color=black] at (0.4,0.6){{\footnotesize $Z_{diss}$}};
	\node[below] at (2.5,-0.5) {{\footnotesize a. The case that $\mu(t)/\Lambda(t)$ is decreasing}};		
    \fill[domain=0:3.8,variable=\t,color=black!10!white] plot ({5.3+ 1.4*exp(-\t/2.4)},\t)--(6.7,0) -- (9.8,0)--(9.8, 3.8)--(5.5874,3.8)--cycle;
	\fill[domain=0:3.8,variable=\t,color=white] plot ({7 + 0.250014548*\t*sqrt(\t)},\t)--(7,0) -- (9.9,0)--(9.9, 3.8)--(8.85, 3.8)--cycle;	
	\fill[domain=0:3.3,variable=\t,color=black!10!white] plot ({8.7 + 0.100308642*pow(\t,2)},\t)--(9.7,0) -- (8.7,0)--cycle;
	\draw[->] (5.5,0) -- (10,0)node[below]{$|\xi|$};
	\draw[->] (5.5,0) -- (5.5,4)node[left]{$t$};
	\node[below left] at(5.5,0){$0$};
	\draw[dashed, domain=0:3.8, color=blue, variable=\t,] plot ({7.5 + 0.11*pow(\t,2)},\t);
	\node[color=blue] at (7.85, 1.5){{\footnotesize $\Gamma$}};
	\draw[domain=0:3.3,color=red,variable=\t] plot ({8.7 + 0.100308642*pow(\t,2)},\t);
	\node[color=black] at (9.3, 0.6){{\footnotesize $Z_{hyp}$}};
	\node[color=black] at (8.4, 0.6){{\footnotesize $Z_{osc}$}};
	\node[color=black] at (7.6, 0.6){{\footnotesize $Z_{red}$}};
	\draw[domain=0:3.8,color=cyan,variable=\t] plot ({8 + 0.089*pow(\t,2)},\t);
	\draw[domain=0:3.8,variable=\t,color=green] plot ({7 + 0.250014548*\t*sqrt(\t)},\t);
	\node[color=black] at (6.8,0.6){{\footnotesize $Z_{ell}$}};	
	\draw[domain=0:3.8,color=brown,variable=\t] plot ({5.3+ 1.4*exp(-\t/2.4)},\t);
	\node[color=black] at (5.9,0.6){{\footnotesize $Z_{diss}$}};
	(9.8, .3); \node[below] at
	(8,-.5) {{\footnotesize b. The case that $\mu(t)/\Lambda(t)$ is increasing}};
\end{tikzpicture}
\caption{Division of the extended phase space into zones}
\label{fig.zone}
\end{center}
\end{figure}
\vspace*{-2em}
Let us introduce the separating lines between the dissipative zone and the elliptic zone by $t_{diss}(|\xi|)=:t_{diss}$, between the elliptic zone and the reduced zone by $t_{ell}(|\xi|)=:t_{ell}$, between the reduced zone and oscillation subzone by $t_{red}(|\xi|)=:t_{red}$, between the oscillation subzone and the hyperbolic zone by $t_{osc}(|\xi|)=:t_{osc}$. By the definitions of the zones we can see that these separating lines really exist and can be described by functions due to the monotonicity of the functions $\frac{\mu(t)}{\La(t)}$ and $F(\La(t))$. \\
We define
\begin{align} \label{h_1(t,xi)}
h_1(t,\xi):=\chi\big( |\xi|F(\La(t)) \big)\frac{\la(t)}{F(\La(t))} + \Big( 1-\chi\big( |\xi|F(\La(t)) \big) \Big)\la(t)|\xi|
\end{align}
and
\begin{align} \label{h_2(t,xi)}
h_2(t,\xi):=\chi\bigg( \frac{\jpnxi}{\ve\frac{\rho(t)\om(t)}{2}} \bigg)\ve\frac{\rho(t)\om(t)}{2}+\bigg( 1-\chi\Big( \frac{\jpnxi}{\ve\frac{\rho(t)\om(t)}{2}} \Big) \bigg) \jpnxi,
\end{align}
where $\chi\in\mathcal{C}^{\iy}[0,\iy)$ such that $\chi(t)=1$ for $0\leq t\leq \frac{1}{2}$ and $\chi(t)=0$ for $t\geq 1$.\\
Introducing the micro-energy $U(t,\xi):=\big( h_1(t,\xi)\fu(t,\xi),D_t\fu(t,\xi) \big)^T$ we obtain from \eqref{Eq.first.Fourier} the system of first order
\begin{equation} \label{System.with.h_1(t,xi)}
    D_tU(t,\xi)=\underbrace{\left( \begin{array}{cc}
	\frac{D_t h_1(t,\xi)}{h_1(t,\xi)} & h_1(t,\xi) \\ [5pt]
	\frac{\la^2(t)\om^2(t)|\xi|^2}{h_1(t,\xi)} & i\rho(t)\om(t) \end{array} \right)}_{A(t,\xi)} U(t,\xi),
\end{equation}
with the initial condition $U(0,\xi) = \big( h_1(0,\xi)\fu(0,\xi),D_t\fu(0,\xi) \big)^T$. \\
On the other hand, we define the micro-energy $V(t,\xi):=\big( h_2(t,\xi)v(t,\xi),D_tv(t,\xi) \big)^T$. Then, by \eqref{after.dissipative} we obtain that $V=V(t,\xi)$ satisfies the following system of first order:
\begin{equation} \label{System.with.h_2(t,xi)}
D_tV(t,\xi)=\underbrace{\left( \begin{array}{cc}
\frac{D_t h_2(t,\xi)}{h_2(t,\xi)} & h_2(t,\xi) \\ [5pt]
\frac{m(t,\xi)}{h_2(t,\xi)} & 0 \end{array} \right)}_{A_V(t,\xi)} V(t,\xi),
\end{equation}
with the initial condition $V(0,\xi) = \big( h_2(0,\xi)v(0,\xi),D_tv(0,\xi) \big)^T$.
\begin{definition} \label{Def.Fund.Sol.}
For any $t\geq s\geq 0$, we denote by $E=E(t,s,\xi)$ and $E_V=E_V(t,s,\xi)$ the fundamental solutions of \eqref{System.with.h_1(t,xi)}
and \eqref{System.with.h_2(t,xi)}, respectively, that is, the matrix-valued functions solving the Cauchy problems
\begin{equation*}
D_tE(t,s,\xi)=A(t,\xi)E(t,s,\xi), \quad E(s,s,\xi)=I,
\end{equation*}
and
\begin{equation*}
D_tE_V(t,s,\xi)=A_V(t,\xi)E_V(t,s,\xi), \quad E_V(s,s,\xi)=I.
\end{equation*}
\end{definition}
Hence, it is easy to prove that $U(t,\xi)=E(t,0,\xi)\big( h_1(0,\xi)\fu(0,\xi),D_t\fu(0,\xi) \big)^T$ and $V(t,\xi)=E_V(t,0,\xi)\big( h_2(0,\xi)v(0,\xi),D_tv(0,\xi) \big)^T$.
\begin{remark}
By the previous considerations, after obtaining estimates for $E_V=E_V(t,s,\xi)$ it is sufficient to apply the backward transformation to the original Cauchy problem. That is, we transform back $E_V=E_V(t,s,\xi)$ to estimate the fundamental solution $E=E(t,s,\xi)$ which is related to a system of first order for the micro-energy $\big( \la(t)|\xi|\fu,D_t\fu \big)^T$, which gives the representation
\begin{equation} \label{Ell.Zone.Back.Rep.}
 E(t,s,\xi)=T(t,\xi)E_V(t,s,\xi)T^{-1}(s,\xi),
\end{equation}
where the matrix $T(t,\xi)$ is defined in the following way:
\[ \left( \begin{array}{cc}
\la(t)|\xi|\fu \\
D_t\fu
\end{array} \right)=\underbrace{\left( \begin{array}{cc}
	\frac{\la(t)|\xi|}{\de(t)h_2(t,\xi)} & 0 \\ [5pt]
	i\frac{\rho(t)\om(t)}{2\de(t)h_2(t,\xi)} & \frac{1}{\de(t)}
	\end{array} \right)}_{T(t,\xi)} \left( \begin{array}{cc}
h_2(t,\xi) v \\
D_t v
\end{array} \right) \]
with the inverse matrix
\[ T^{-1}(t,\xi)=\left( \begin{array}{cc}
\frac{\de(t)h_2(t,\xi)}{\la(t)|\xi|} & 0 \\ [5pt]
-i\frac{\rho(t)\om(t)\de(t)}{2\la(t)|\xi|} & \de(t)
\end{array} \right), \]
where the auxiliary function
\begin{equation*}
\de(t):=\exp\bigg( \frac{1}{2}\int_{0}^{t}\rho(\ta)\om(\ta)d\ta \bigg)
\end{equation*}
is related to the transformed damping term $\rho(t)\om(t)\fu_t$.
\end{remark}
\subsection{Considerations in the hyperbolic zone}
In the hyperbolic zone after $M$ steps of diagonalization procedure we can guarantee that the remainder part is uniformly integrable over the hyperbolic zone. Here we follow some ideas of \cite{Hir07}, \cite{H-W09} and \cite{Yagd.97}. First of all let us introduce the following family of symbol classes in the hyperbolic zone.
\begin{definition}
A function $f=f(t,\xi)$ belongs to the hyperbolic symbol class $S_N^l\{m_1,m_2\}$ of limited smoothness if the estimates	
\begin{equation*}
\big| D_t^k f(t,\xi) \big| \leq C_{k}\jpnxi^{m_1}\Xi(t)^{-m_2-k}
\end{equation*}
 are valid for all $(t,\xi)\in \Zhyp(N)$ and all $k=0,1,\cdots,l$ with $l\leq M$. Here $M$ is the order of the regularity of the time-dependent coefficients as well as the number of steps of the diagonalization procedure.
\end{definition}
We note that in $\Zhyp(N)$ the auxiliary symbol $\jpnxi$ can be estimated by
\begin{equation} \label{jpnxi.est.in.Hyp}
\jpnxi \approx \la(t)|\xi|.
\end{equation}
From the definition of the symbol classes we may conclude the following rules.
\begin{proposition} \label{Prop.Symbol.Hyp.}
The following statements are true:
\begin{enumerate}
\item $S_N^l\{m_1,m_2\}$ is a vector space for all non-negative integers $l$;
\item $S_N^l\{m_1,m_2\}\cdot S_N^{l'}\{m_1',m_2'\}\hookrightarrow S_N^{\tilde{l}}\{m_1+m_1',m_2+m_2'\}$
for all non-negative integers $l$ and $l'$ with $\tilde{l}=\min\{l,l'\}$;
\item $D_t^kS_N^l\{m_1,m_2\}\hookrightarrow S_N^{l-k}\{m_1,m_2+k\}$
for all non-negative integers $l$ with $k\leq l$;
\item $S_N^0\{-M,M+1\}\hookrightarrow L_{\xi}^{\iy}L_t^1\big( \Zhyp(N) \big)$ with $M$ from the assumption (A4).
\end{enumerate}
\end{proposition}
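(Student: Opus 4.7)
My plan is to prove the four statements separately, treating (1)--(3) as formal consequences of the estimate \eqref{Def.Symbol.Class.Hyp} essentially as in the classical symbolic calculus, and devoting the main effort to (4), which is the only statement that uses the specific hypotheses (A1)--(A4) on the coefficients.

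For (1), the vector space property is immediate: a linear combination $\al f+\ba g$ with $f,g\in S_N^l\{m_1,m_2\}$ satisfies \eqref{Def.Symbol.Class.Hyp} with constant $|\al|C_k(f)+|\ba|C_k(g)$ by linearity of $D_t^k$. For (2), I would apply the Leibniz rule
\[
D_t^k(fg)=\sum_{j=0}^k \binom{k}{j}(D_t^jf)(D_t^{k-j}g),
\]
valid for $k\le \tilde l=\min\{l,l'\}$, and estimate each term by
$C_j\jpnxi^{m_1}\Xi(t)^{-m_2-j}\cdot C_{k-j}'\jpnxi^{m_1'}\Xi(t)^{-m_2'-(k-j)}$,
so that the product sits in $S_N^{\tilde l}\{m_1+m_1',m_2+m_2'\}$ as claimed. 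For (3), if $f\in S_N^l\{m_1,m_2\}$ and $k\le l$, then for any $j\le l-k$
\[
|D_t^j(D_t^kf)(t,\xi)|=|D_t^{j+k}f(t,\xi)|\le C_{j+k}\jpnxi^{m_1}\Xi(t)^{-m_2-k-j},
\]
which is precisely the defining estimate of $S_N^{l-k}\{m_1,m_2+k\}$.

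The main point is (4). Fix $f\in S_N^0\{-M,M+1\}$. I want to bound
\[
\int_{\{t:(t,\xi)\in \Zhyp(N)\}}|f(t,\xi)|\,dt
\]
uniformly in $\xi$. Using \eqref{jpnxi.est.in.Hyp}, in $\Zhyp(N)$ the inequality $\jpnxi\sim\la(t)|\xi|$ yields
\[
|f(t,\xi)|\lesssim \jpnxi^{-M}\Xi(t)^{-M-1}\lesssim |\xi|^{-M}\la^{-M}(t)\Xi^{-M-1}(t).
\]
Since the $t$-section of $\Zhyp(N)$ is of the form $[t_{osc}(|\xi|),\iy)$ (or empty), assumption (A4) gives
\[
\int_{t_{osc}(|\xi|)}^{\iy}\la^{-M}(\ta)\Xi^{-M-1}(\ta)d\ta\le C_4\,\Th^{-M}(t_{osc}(|\xi|)).
\]
The key observation is now that $(t_{osc}(|\xi|),\xi)$ lies on the boundary of $\Zhyp(N)$ where $\Th(t)|\xi|\ge N$, so $\Th(t_{osc}(|\xi|))\ge N/|\xi|$ and hence $\Th^{-M}(t_{osc}(|\xi|))\le N^{-M}|\xi|^M$. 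Combining these estimates,
\[
\int_{t_{osc}(|\xi|)}^{\iy}|f(t,\xi)|\,dt\lesssim |\xi|^{-M}\cdot N^{-M}|\xi|^M = N^{-M},
\]
which is independent of $\xi$. This shows $f\in L_\xi^{\iy}L_t^1(\Zhyp(N))$ and completes the proof.

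The main obstacle is step (4): one has to see that the apparently unfavorable factor $\jpnxi^{-M}$ in the symbol estimate is exactly counterbalanced by the growth $\Th^{-M}(t_{osc})\lesssim |\xi|^M$ coming from the definition of the hyperbolic zone via the parameter $N$, and that $\Xi^{-M-1}(t)$ provides the missing time integrability via assumption (A4). Matching the exponent $-M-1$ of $\Xi$ to the statement of (A4) is what forces the specific pair $\{-M,M+1\}$ in the formulation of property (4).
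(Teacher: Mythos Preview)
Your proposal is correct and follows essentially the same approach as the paper. The paper only spells out (4) and leaves (1)--(3) implicit; your proof of (4) reproduces the paper's argument line by line: use $\jpnxi\sim\la(t)|\xi|$ in $\Zhyp(N)$, apply (A4) to control the $t$-integral, and cancel the factor $|\xi|^{-M}$ against $\Th^{-M}(t_{osc})\le N^{-M}|\xi|^M$ coming from the zone condition $\Th(t)|\xi|\ge N$.
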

\begin{proof} We only verify the fourth property. Indeed, if $f(t,\xi)\in S_N^0\{-M,M+1\}$, then we have
\begin{align*}
\int_{t_{osc}}^{\iy}\big| f(\ta,\xi) \big|d\ta & \lesssim \int_{t_{osc}}^{\iy}\jpn_{\la(\ta),\om(\ta)}^{-M}\Xi^{-M-1}(\ta)d\ta \\
& \lesssim \int_{t_{osc}}^{\iy}|\xi|^{-M}\la^{-M}(\ta)\Xi^{-M-1}(\ta)d\ta \\
& \lesssim  |\xi|^{-M}\Th^{-M}(t_{osc}) \leq \frac{1}{N^{M}}<\iy,
\end{align*}
where we used \eqref{jpnxi.est.in.Hyp}, the assumption (A4) and the definition of $\Zhyp(N)$. \qed
\end{proof}
\begin{proposition}
Assume the conditions (A1), (A2) and (B1), (B2). Then, the following inequalities hold:
\begin{enumerate}
\item $\big| D_t^k\jpnxi \big|\lesssim \jpnxi\Xi^{-k}(t)$ for all $k=0,1,\cdots,l$ with $l\leq M$;\\
\item $\big| D_t^k\big(\rho(t)\om(t) \big) \big| \lesssim \jpnxi\Xi^{-k}(t)$ for all $k=0,1,\cdots,l$ with $l\leq M$.
\end{enumerate}
\end{proposition}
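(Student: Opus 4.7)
The plan is to reduce everything to the single \emph{master principle} that, for any product of the time-dependent factors $\la(t)$, $\om(t)$, $\mu(t)$ and $1/\La(t)$ (and hence of $\rho(t)\om(t)=\mu(t)\la(t)\om(t)/\La(t)$), each differentiation costs only a factor of $\Xi^{-1}(t)$. First I would upgrade the pointwise bounds in (A1) and (B2) using the compatibility inequality $\la(t)/\La(t)\lesssim \Xi^{-1}(t)$ coming from (A2): this turns them into
\[
|d_t^k\la(t)|\lesssim \la(t)\Xi^{-k}(t),\qquad |d_t^k\mu(t)|\lesssim \mu(t)\Xi^{-k}(t),
\]
while (A2) already gives $|d_t^k\om(t)|\lesssim \Xi^{-k}(t)\lesssim \om(t)\Xi^{-k}(t)$ since $\om\sim 1$. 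A short induction using $d_t(1/\La)=-(\la/\La)(1/\La)$ yields $|d_t^k(1/\La(t))|\lesssim \La^{-1}(t)\Xi^{-k}(t)$, and the Leibniz rule propagates the bound through products, so that
\[
|D_t^k(\rho(t)\om(t))|\lesssim \rho(t)\om(t)\,\Xi^{-k}(t)\qquad\text{for }0\le k\le M.
\]

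For the second inequality of the proposition I would simply combine the previous estimate with the hyperbolic-zone lower bound $\jpnxi\ge N\rho(t)\om(t)/2$, which gives $\rho(t)\om(t)\lesssim \jpnxi$ and hence $|D_t^k(\rho(t)\om(t))|\lesssim \jpnxi\,\Xi^{-k}(t)$, as required.

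For the first inequality I would set $g(t,\xi):=\la^2(t)\om^2(t)|\xi|^2-\rho^2(t)\om^2(t)/4$, so that $\jpnxi=\sqrt{g(t,\xi)}$ in $\Zhyp(N)$. Applying the master principle to the two summands separately gives
\[
|D_t^k(\la^2\om^2|\xi|^2)|\lesssim \la^2(t)|\xi|^2\,\Xi^{-k}(t),\qquad |D_t^k(\rho^2\om^2)|\lesssim \rho^2(t)\,\Xi^{-k}(t),
\]
and using $\rho(t)\om(t)\lesssim \jpnxi\sim \la(t)|\xi|$ in $\Zhyp(N)$ together with \eqref{jpnxi.est.in.Hyp}, both quantities are controlled by $\jpnxi^2\,\Xi^{-k}(t)\sim g(t,\xi)\,\Xi^{-k}(t)$. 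Thus $|D_t^k g|\lesssim g\,\Xi^{-k}$ throughout the hyperbolic zone.

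The remaining step --- which I expect to be the only technically delicate one --- is a \emph{square-root lemma}: if a positive function $g$ satisfies $|D_t^k g|\lesssim g\,\Xi^{-k}$ for $0\le k\le l$, then $|D_t^k\sqrt{g}|\lesssim \sqrt{g}\,\Xi^{-k}$ for the same range. I would prove this by induction on $k$, expanding $D_t^k\sqrt{g}$ via Fa\`a di Bruno's formula as a finite sum of monomials of the form $(D_t^{k_1}g)\cdots(D_t^{k_j}g)/g^{j-1/2}$ with $k_1+\cdots+k_j=k$ and $k_i\ge 1$; each such monomial is then bounded by $g^j\,\Xi^{-k}/g^{j-1/2}=\sqrt{g}\,\Xi^{-k}$. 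Applied to our $g(t,\xi)$ this gives the desired bound $|D_t^k\jpnxi|\lesssim \jpnxi\,\Xi^{-k}$, completing the argument.
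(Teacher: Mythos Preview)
Your argument is correct. The paper states this proposition without proof, so there is no original argument to compare against; what you have written is precisely the kind of verification the authors presumably had in mind and chose to suppress. The key observations---that the compatibility condition in (A2) upgrades the $\la/\La$-scales in (A1) and (B2) to $\Xi^{-1}$-scales, that the resulting ``each derivative costs $\Xi^{-1}$'' principle propagates through products via Leibniz, and that in $\Zhyp(N)$ both $\la(t)|\xi|$ and $\rho(t)\om(t)$ are controlled by $\jpnxi$---are exactly what is needed, and your Fa\`a di Bruno square-root lemma is the clean way to finish part~(1). One minor remark: you rightly use that the proposition is implicitly restricted to $\Zhyp(N)$ (where $g>0$ and the zone inequality $\rho(t)\om(t)\lesssim\jpnxi$ holds); this is clear from context in the paper but worth making explicit in a written proof, since neither inequality is true globally.
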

In the hyperbolic zone we have $h_2(t,\xi)=\jpnxi$. So, we introduce the micro-energy $V=\big(\jpnxi v, D_t v \big)^T$. Then, it holds
\begin{equation*}
D_tV=\left( \begin{array}{cc}
0 & \jpnxi \\ [5pt]
\jpnxi & 0
\end{array} \right)V + \left( \begin{array}{cc}
\frac{D_t\jpnxi}{\jpnxi} & 0 \\ [5pt]
-\frac{(\rho(t)\om(t))'}{2\jpnxi} & 0
\end{array} \right)V.
\end{equation*}
Let us carry out the first step of the diagonalization procedure. The eigenvalues of the first matrix are $\pm\jpnxi$. Thus, the matrix of eigenvectors $P$ and its inverse $P^{-1}$ are
\[ P = \left( \begin{array}{cc}
1 & -1 \\
1 & 1
\end{array} \right), \qquad P^{-1}=\frac{1}{2}\left( \begin{array}{cc}
1 & 1 \\
-1 & 1
\end{array} \right). \]
Defining $V^{(0)}:=P^{-1}V$, we get the transformed system
\begin{equation*}
D_tV^{(0)}=\big( \D_0(t,\xi)+\R_0(t,\xi) \big)V^{(0)},
\end{equation*}
where
\begin{equation*}
\begin{aligned}
\D_0(t,\xi) & =\left( \begin{matrix}
\jpnxi + \frac{D_t\jpnxi}{2\jpnxi}-\frac{( \rho(t)\om(t) )'}{4\jpnxi} \\ 0
\end{matrix}\right.\\
& \qquad \qquad \qquad \qquad \qquad \qquad
\left. \begin{matrix}
0 \\ -\jpnxi + \frac{D_t\jpnxi}{2\jpnxi}+\frac{( \rho(t)\om(t) )'}{4\jpnxi}
\end{matrix} \right)
\end{aligned}
\end{equation*}
and
\begin{equation*}
\R_0(t,\xi) = \left( \begin{array}{cc}
0 & -\frac{D_t\jpnxi}{2\jpnxi}+\frac{( \rho(t)\om(t) )'}{4\jpnxi} \\ [5pt]
-\frac{D_t\jpnxi}{2\jpnxi}-\frac{( \rho(t)\om(t) )'}{4\jpnxi} & 0
\end{array} \right).
\end{equation*}
Note that $\R_0(t,\xi)\in S_N^{M-1}\{0,1\}$. Now we want to carry out further steps of the diagonalization procedure. The goal is to transform the previous system such that the new matrix has diagonal structure and the new remainder belongs to a ``better'' hyperbolic symbol class. \\
The diagonalization procedure for the following lemma is essentially based on the approach used in \cite{ReiYag00} and \cite{Yagd.97} for wave equations with variable speed of propagation.
\begin{lemma}
There exists a zone constant $N>0$ such that for any $k=0,1,\cdots,M$ we can find matrices with the following properties:
\begin{itemize}
\item the matrices $N_k(t,\xi)\! \in \! S_N^{M-k}\{0,0\}$ are invertible and $N_k^{-1}(t,\xi) \!\in\! S_N^{M-k}\{0,0\}$;
\item the matrices $\D_k(t,\xi)\in S_N^{M-k}\{1,0\}$ are diagonal and
\[ \D_k(t,\xi)=\diag \big(\ta_k^+(t,\xi),\ta_k^-(t,\xi) \big) \]
with $|\ta_k^+(t,\xi)-\ta_k^-(t,\xi)|\geq C_k \jpnxi;$
\item the matrices $\R_k(t,\xi)\in S_N^{M-k}\{-k,k+1\}$ are antidiagonal;
\end{itemize}
all these matrices are defined in $\Zhyp(N)$ such that the operator identity
\begin{equation*}
\big( D_t-\D_k(t,\xi)-\R_k(t,\xi) \big)N_k(t,\xi) = N_k(t,\xi)\big( D_t-\D_{k+1}(t,\xi)-\R_{k+1}(t,\xi) \big)
\end{equation*}
is valid.
\end{lemma}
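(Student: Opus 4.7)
The plan is to argue by induction on $k$, with the base case $k=0$ already furnished by the preceding computation: we have $\D_0\in S_N^M\{1,0\}$ diagonal with $|\ta_0^+-\ta_0^-|=2\jpnxi$ and $\R_0\in S_N^{M-1}\{0,1\}$ antidiagonal (which one rewrites as $S_N^{M-1}\{-0,0+1\}$). For the inductive step, assume the matrices $\D_k,\R_k$ with the stated properties have been constructed. Following the standard diagonalization scheme of \cite{Yagd.97}, I would look for a correction of the form $N_k=I+N_k^{(1)}$ with $N_k^{(1)}$ antidiagonal, and determine $N_k^{(1)}$ by demanding that the commutator $\D_kN_k^{(1)}-N_k^{(1)}\D_k$ cancels $\R_k$.

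Writing $\R_k=\bigl(\begin{smallmatrix} 0 & r_k^{(1)}\\ r_k^{(2)} & 0\end{smallmatrix}\bigr)$ and computing, this forces
\[
N_k^{(1)}(t,\xi)=\left(\begin{array}{cc} 0 & -\dfrac{r_k^{(1)}}{\ta_k^+-\ta_k^-}\\[6pt] \dfrac{r_k^{(2)}}{\ta_k^+-\ta_k^-} & 0\end{array}\right),
\]
which is well defined thanks to the spectral gap $|\ta_k^+-\ta_k^-|\gtrsim\jpnxi$. Using Proposition~\ref{Prop.Symbol.Hyp.} (symbol calculus) and the inductive hypothesis $\R_k\in S_N^{M-k}\{-k,k+1\}$, one obtains $N_k^{(1)}\in S_N^{M-k}\{-k-1,k+1\}$. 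Since $(t,\xi)\in\Zhyp(N)$ implies $\jpnxi\,\Xi(t)\geq C_1N$ (by the compatibility condition in (A2) together with the second defining inequality of the hyperbolic zone $\Th(t)|\xi|\geq N$ and \eqref{jpnxi.est.in.Hyp}), the pointwise bound on $N_k^{(1)}$ is $\lesssim(C_1N)^{-k-1}$, so $N_k^{(1)}$ also sits in $S_N^{M-k}\{0,0\}$ and, for $N$ chosen sufficiently large (once and for all for all $k=0,1,\dots,M-1$), the Neumann series for $(I+N_k^{(1)})^{-1}$ converges, yielding $N_k,N_k^{-1}\in S_N^{M-k}\{0,0\}$.

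Next I would plug $N_k=I+N_k^{(1)}$ into the operator identity and expand. Using $D_t\circ N_k^{(1)}=(D_tN_k^{(1)})+N_k^{(1)}\circ D_t$ and the cancellation $\D_kN_k^{(1)}-N_k^{(1)}\D_k=-\R_k$, the identity
\[
(D_t-\D_k-\R_k)N_k=N_k(D_t-\D_{k+1}-\R_{k+1})
\]
reduces to
\[
\D_{k+1}+\R_{k+1}=\D_k+(I+N_k^{(1)})^{-1}\bigl[\R_kN_k^{(1)}-(D_tN_k^{(1)})\bigr].
\]
I would then define $\D_{k+1}$ to be $\D_k$ plus the diagonal part of the bracketed correction, and $\R_{k+1}$ to be its antidiagonal part. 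The class assertions then follow from the rules (1)--(3) of Proposition~\ref{Prop.Symbol.Hyp.}: the derivative term $(D_tN_k^{(1)})$ lies in $S_N^{M-k-1}\{-k-1,k+2\}$, and $\R_kN_k^{(1)}\in S_N^{M-k}\{-2k-1,2k+2\}$, which in $\Zhyp(N)$ again embeds into $S_N^{M-k-1}\{-k-1,k+2\}$ by the same gain $(\jpnxi\Xi)^{-k}\lesssim(C_1N)^{-k}$. Multiplication by $(I+N_k^{(1)})^{-1}\in S_N^{M-k}\{0,0\}$ preserves the class, so $\R_{k+1}\in S_N^{M-k-1}\{-k-1,k+2\}$ and the new diagonal contribution lies in $S_N^{M-k-1}\{-k-1,k+2\}\subset S_N^{M-k-1}\{1,0\}$, which is negligible compared with the leading part $\pm\jpnxi$ of $\D_k$. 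Consequently $\D_{k+1}=\diag(\ta_{k+1}^+,\ta_{k+1}^-)$ with $\ta_{k+1}^{\pm}=\ta_k^{\pm}+O((C_1N)^{-k-1}\jpnxi)$, so the spectral gap $|\ta_{k+1}^+-\ta_{k+1}^-|\geq C_{k+1}\jpnxi$ is preserved after shrinking the constant and, if needed, enlarging $N$.

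The main obstacle I anticipate is bookkeeping: verifying that every term produced by the Neumann expansion of $(I+N_k^{(1)})^{-1}$ and by the non-commutative product $\R_kN_k^{(1)}-(D_tN_k^{(1)})$ lands in a class that, thanks to the zone estimate $\jpnxi\,\Xi(t)\geq C_1N$, can be absorbed into $S_N^{M-k-1}\{-k-1,k+2\}$, and that a single zone constant $N$ suffices simultaneously for the invertibility of all $M$ matrices $N_0,\dots,N_{M-1}$ and for the spectral gaps of all $\D_0,\dots,\D_M$. This is handled by choosing $N$ large enough at the very beginning so that the geometric factors $(C_1N)^{-k-1}$, $k=0,\dots,M-1$, are all dominated by fixed fractions of the base spectral gap and by the unit ball in matrix norm.\qed
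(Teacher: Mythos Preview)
Your proposal is correct and follows exactly the standard diagonalization scheme of \cite{Yagd.97} and \cite{ReiYag00} that the paper invokes without giving details. The inductive construction of $N_k^{(1)}$ via the commutator equation, the use of the zone estimate $\jpnxi\,\Xi(t)\gtrsim N$ to secure invertibility and the embedding of the product term $\R_kN_k^{(1)}$ into the target class, and the diagonal/antidiagonal splitting to define $\D_{k+1}$, $\R_{k+1}$ are all precisely the steps the references carry out; the paper itself omits the proof entirely, so your write-up in fact supplies more detail than the original.
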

Finally, we obtain for $k=M$ the remainder $\R_M=\R_M(t,\xi)\in S_N^{0}\{-M,M+1\}$ which is uniformly integrable over the hyperbolic zone by Proposition \ref{Prop.Symbol.Hyp.}. \\
To complete the derivation of our representation we need more information on the diagonal matrices $\D_k=\D_k(t,\xi)$. An improvement of the diagonalization procedure was developed in \cite{Hir07}. The author performed more diagonalization steps in order to use structural properties of the coefficient matrices by assuming higher regularity of the entries of the matrix. Following \cite{Hir07} and \cite{H-W.Non-effective} we arrive at the following lemma.
\begin{lemma}
The difference of the diagonal entries of $\D_k(t,\xi)$ is real for all $k=0,1,\cdots,M-1$.	
\end{lemma}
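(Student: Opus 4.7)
The plan is to prove the lemma by induction on $k$, carrying along the auxiliary invariant that the off-diagonal entries $r_k^+, r_k^-$ of $\R_k$ obey the anti-Hermitian symmetry $r_k^- = -\overline{r_k^+}$. Together with reality of $\ta_k^+ - \ta_k^-$, this will propagate through the diagonalization procedure.

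For the base case $k=0$, a direct inspection of the explicit formula for $\D_0$ suffices: $\jpnxi$ and $(\rho(t)\om(t))'$ are real-valued while $D_t \jpnxi = -i\pa_t \jpnxi$ is purely imaginary, so
\[ \ta_0^+(t,\xi) - \ta_0^-(t,\xi) = 2\jpnxi - \frac{(\rho(t)\om(t))'}{2\jpnxi} \in \mathbb{R}, \]
and a short computation using $\overline{D_t f} = -D_t f$ for real-valued $f$ yields $r_0^- = -\overline{r_0^+}$ from the off-diagonal entries of $\R_0$.

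For the inductive step I would follow the standard diagonalization scheme: write $N_k = I + N_k^{(1)}$ with $N_k^{(1)}$ antidiagonal and determined by $[N_k^{(1)}, \D_k] = \R_k$, so that its entries are $a_k = -r_k^+/(\ta_k^+-\ta_k^-)$ and $b_k = r_k^-/(\ta_k^+-\ta_k^-)$, which by the induction hypothesis satisfy $b_k = -\overline{a_k}$. Substituting into the operator identity and separating diagonal from antidiagonal blocks couples $F_k := \D_{k+1} - \D_k$ to $\R_{k+1}$ via
\[ F_k = \R_k N_k^{(1)} - N_k^{(1)}\R_{k+1}, \qquad \R_{k+1} = -D_t N_k^{(1)} - N_k^{(1)} F_k; \]
eliminating $\R_{k+1}$ gives $(I - (N_k^{(1)})^2) F_k = \R_k N_k^{(1)} + N_k^{(1)} D_t N_k^{(1)}$, where $(N_k^{(1)})^2 = a_k b_k\, I = -|a_k|^2\, I$ is a real scalar multiple of the identity, so it preserves reality structure. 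A direct computation of the difference of the two diagonal entries on the right-hand side gives a real expression: the entries of $\R_k N_k^{(1)}$ both reduce to $|r_k^+|^2/(\ta_k^+ - \ta_k^-)$ and thus contribute nothing to the difference, while the entries of $N_k^{(1)} D_t N_k^{(1)}$ differ by $a_k \overline{D_t a_k} + \overline{a_k} D_t a_k = 2\,\Reel(a_k \overline{D_t a_k})$. Hence $(\ta_{k+1}^+ - \ta_{k+1}^-) - (\ta_k^+ - \ta_k^-) \in \mathbb{R}$, and the induction hypothesis closes the reality claim. To propagate the auxiliary invariant one checks that $-D_t N_k^{(1)}$ and $-N_k^{(1)} F_k$ individually preserve $r^- = -\overline{r^+}$, using the reality of $F_k$, the symmetry of $N_k^{(1)}$, and $\overline{D_t f} = -D_t f$ for real $f$.

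The main obstacle is the coupled nature of the system $(F_k, \R_{k+1})$: neither matrix is given in closed form at each step. I would resolve this by unfolding $(I - (N_k^{(1)})^2)^{-1}$ as a Neumann series in the symbol class $S_N^{M-k-1}\{-k-1,k+2\}$, whose convergence is ensured by the smallness of $(N_k^{(1)})^2$ in that class, and then verifying termwise that each summand preserves both the reality of the difference of the diagonal entries of $F_k$ and the anti-Hermitian symmetry of $\R_{k+1}$. Together these bookkeeping steps close the induction and yield the lemma for all $k \leq M-1$.
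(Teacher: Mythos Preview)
Your inductive strategy—carrying along the off-diagonal symmetry of $\R_k$ together with reality of $\ta_k^+-\ta_k^-$—is exactly the right idea, and it is essentially the argument in the references the paper cites (the paper itself gives no proof here, only pointers to \cite{Hir07} and \cite{H-W.Non-effective}). However, your execution contains several sign errors that you should repair.

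First, from $a_k=-r_k^+/(\ta_k^+-\ta_k^-)$, $b_k=r_k^-/(\ta_k^+-\ta_k^-)$, the hypothesis $r_k^-=-\overline{r_k^+}$ with $\ta_k^+-\ta_k^-$ real gives $b_k=\overline{a_k}$, \emph{not} $b_k=-\overline{a_k}$. Consequently $(N_k^{(1)})^2=|a_k|^2\,I$, so $I-(N_k^{(1)})^2=(1-|a_k|^2)I$; invertibility now genuinely requires $|a_k|<1$, which follows from the symbol estimate $N_k^{(1)}\in S_N^{M-k}\{-1,1\}$ after enlarging the zone constant $N$. Your version $(1+|a_k|^2)$ hid this point. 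Second, the diagonal entries of $\R_k N_k^{(1)}$ are $-|r_k^+|^2/\delta_k$ and $+|r_k^+|^2/\delta_k$ (with $\delta_k=\ta_k^+-\ta_k^-$), so they do \emph{not} coincide; their difference is $-2|r_k^+|^2/\delta_k$, which is still real, so the conclusion survives. Third, with the corrected relation $b_k=\overline{a_k}$, the check that $-D_tN_k^{(1)}$ and $-N_k^{(1)}F_k$ each produce an antidiagonal matrix with the symmetry $r^-=-\overline{r^+}$ goes through cleanly (using $D_t\overline{f}=-\overline{D_tf}$ and the induced relation $f_2=-\overline{f_1}$ for the entries of $F_k$); with your sign convention it would fail.

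Finally, the ``main obstacle'' you flag is not one: since $(N_k^{(1)})^2$ is a real scalar multiple of the identity, no Neumann series is needed—$(I-(N_k^{(1)})^2)^{-1}=(1-|a_k|^2)^{-1}I$ is explicit and manifestly preserves both the reality of the diagonal difference and the off-diagonal symmetry. Once these corrections are made, your induction closes and the lemma follows.
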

Now we want to construct the fundamental solution $E_{hyp}^V=E_{hyp}^V(t,s,\xi)$ with $0 \leq s\leq t$, for the operator
\[ D_t-\D_0(t,\xi)-\R_0(t,\xi). \]
For this reason after $M$ steps of diagonalization it is sufficient to construct the fundamental solution satisfying the system
\[ D_tE_M(t,s,\xi)=\big( \D_M(t,\xi)+\R_M(t,\xi) \big)E_M(t,s,\xi), \quad E_M(s,s,\xi)=I. \]
At first we solve the diagonal system
\[ D_t\E_M(t,s,\xi)=\D_M(t,s,\xi)\E_M(t,s,\xi), \quad \E_M(s,s,\xi)=I, \quad 0\leq s\leq t. \]
Its fundamental solution is given by
\[ \E_M=\E_M(t,s,\xi)=\exp\Big( i\int_{s}^{t}\D_M(\th,\xi)d\th \Big)=\diag \Big( e^{i\int_{s}^{t}\ta_M^+(\th,\xi)d\th}, e^{i\int_{s}^{t}\ta_M^-(\th,\xi)d\th} \Big). \]
We make the \textit{ansatz} \[ E_M(t,s,\xi)=\E_M(t,s,\xi)\Q_M(t,s,\xi)\] with a uniformly bounded and invertible matrix $\Q_M=\Q_M(t,s,\xi)$. It follows that the matrix $\Q_M$ satisfies the Cauchy problem
\[ D_t\Q_M(t,s,\xi)=R_M(t,s,\xi)\Q_M(t,s,\xi), \quad \Q_M(s,s,\xi)=I \]
with the coefficient matrix
\[ R_M(t,s,\xi)=\E_M(s,t,\xi)\R_M(t,\xi)\E_M(t,s,\xi).\]
Taking account of $\R_M(t,\xi)\in S_N^0\{-M,M+1\}$ we obtain
\[ \big| R_M(t,s,\xi) \big| = \big| \R_M(t,\xi) \big|\lesssim \jpnxi^{-M}\Xi^{-M-1}(t) \lesssim |\xi|^{-M}\la^{-M}(t)\Xi^{-M-1}(t). \]
The solution $\Q_M=\Q_M(t,s,\xi)$ can be represented as Peano-Baker series
\begin{align*}
&\Q_M(t,s,\xi) \\
& \,\,\, = I+\sum_{k=1}^{\iy}i^k\int_{s}^{t}R_M(t_1,s,\xi)\int_{s}^{t_1}R_M(t_2,s,\xi)\cdots\int_{s}^{t_{k-1}}R_M(t_k,s,\xi)dt_k\cdots dt_1.
\end{align*}
Then, we obtain the following statement.
\begin{lemma} \label{Lemma Q_m}
The fundamental solution $E_{hyp}^V=E_{hyp}^V(t,s,\xi)$ is representable in the following form:
\[ E_{hyp}^V(t,s,\xi)=P\Big( \prod_{k=0}^{M-1}N_k(t,\xi) \Big)\E_M(t,s,\xi)\Q_M(t,s,\xi)\Big( \prod_{k=0}^{M-1}N_k^{-1}(s,\xi) \Big)P^{-1} \]
for all $(t,\xi), (s,\xi)\in \Zhyp(N)$, where
\begin{itemize}
\item the matrices $N_k=N_k(t,\xi)$ and $N_k^{-1}=N_k^{-1}(t,\xi)$ are uniformly bounded and invertible;
\item the matrices $\Q_M=\Q_M(t,s,\xi)$ and $\Q_M^{-1}=\Q_M^{-1}(t,s,\xi)$ are uniformly bounded and invertible.
\end{itemize}
\end{lemma}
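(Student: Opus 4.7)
My approach combines two ingredients: the diagonalization lemma already reduces matters to a system whose remainder is uniformly integrable in $t$, and the oscillatory conjugation $\E_M(s,t,\xi)(\cdot)\E_M(t,s,\xi)$ preserves the pointwise size of this remainder, so that a standard Peano--Baker series closes the argument.

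\emph{Step 1: unwinding the diagonalization chain.} Set $V^{(0)}:=\tilde M^{-1}V$, so that $D_tV^{(0)}=(\D_0+\R_0)V^{(0)}$. The operator identity
\begin{equation*}
(D_t-\D_k-\R_k)N_k = N_k(D_t-\D_{k+1}-\R_{k+1})
\end{equation*}
from the preceding diagonalization lemma implies, inductively on $k$, that if $E_{k+1}(t,s,\xi)$ solves $D_tE_{k+1}=(\D_{k+1}+\R_{k+1})E_{k+1}$ with $E_{k+1}(s,s,\xi)=I$, then
\begin{equation*}
E_k(t,s,\xi):=N_k(t,\xi)\,E_{k+1}(t,s,\xi)\,N_k^{-1}(s,\xi)
\end{equation*}
is the fundamental solution at level $k$. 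Iterating for $k=0,1,\ldots,M-1$, then prepending $\tilde M$, appending $\tilde M^{-1}$ and inserting $E_M=\E_M\Q_M$, I recover exactly the product formula in the statement. The uniform boundedness and invertibility of $N_k$ and $N_k^{-1}$ are immediate from $N_k,N_k^{-1}\in S_N^{M-k}\{0,0\}$ by applying \eqref{Def.Symbol.Class.Hyp} with the derivative index $0$.

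\emph{Step 2: boundedness of $\Q_M$ via Peano--Baker series.} The conjugated remainder $R_M(t,s,\xi)=\E_M(s,t,\xi)\R_M(t,\xi)\E_M(t,s,\xi)$ has entrywise absolute value equal to that of $\R_M(t,\xi)$. Indeed, since $\R_M$ is antidiagonal and $\E_M=\diag(e^{i\int_s^t\ta_M^+d\th},e^{i\int_s^t\ta_M^-d\th})$, the conjugation multiplies the $(1,2)$ and $(2,1)$ entries by $e^{\pm i\int_s^t(\ta_M^--\ta_M^+)d\th}$; the reality lemma above guarantees that these phases are of modulus one. Combined with $\R_M\in S_N^0\{-M,M+1\}$ and property $4$ of Proposition~\ref{Prop.Symbol.Hyp.}, I obtain
\begin{equation*}
\int_s^t|R_M(\ta,s,\xi)|\,d\ta \,\leq\, \frac{C}{N^M}
\end{equation*}
uniformly for $(s,\xi),(t,\xi)\in\Zhyp(N)$. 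The Peano--Baker series for $\Q_M$ is then dominated termwise by that of $\exp(C/N^M)$, yielding $\|\Q_M(t,s,\xi)\|\leq e^{C/N^M}$; choosing $N$ large enough so that $e^{C/N^M}-1<1$ makes $\Q_M$ invertible by a Neumann series and produces a uniform bound on $\Q_M^{-1}$ of the same type.

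The key obstacle I expect is ensuring uniformity of the integral estimate in $|\xi|$. This is precisely what forces the restriction to the \emph{smaller} hyperbolic zone defined by the additional condition $\Th(t)|\xi|\geq N$: under this condition the proof of property $4$ in Proposition~\ref{Prop.Symbol.Hyp.} gives exactly the decay $|\xi|^{-M}\Th^{-M}(t_{osc})\leq N^{-M}$ that drives the whole argument. All other steps are structural consequences of the diagonalization lemma and the reality of $\ta_M^+-\ta_M^-$.
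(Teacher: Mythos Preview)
Your proof is correct and follows exactly the approach laid out in the paper: the paper does not give a separate proof of this lemma but rather states it as a summary of the preceding construction (the chain of operator identities producing the product formula, the symbol-class membership $N_k,N_k^{-1}\in S_N^{M-k}\{0,0\}$ giving uniform bounds, and the Peano--Baker estimate using $|R_M(t,s,\xi)|=|\R_M(t,\xi)|$ together with Proposition~\ref{Prop.Symbol.Hyp.}(4)). Your Step~1 and Step~2 simply make these implicit arguments explicit, including the observation that the reality of $\ta_M^+-\ta_M^-$ is what forces the conjugating phases to have modulus one---precisely the mechanism behind the paper's assertion $|R_M|=|\R_M|$.
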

Finally, an estimate for the fundamental solution $E_{hyp}^V=E_{hyp}^V(t,s,\xi)$ is given by the following statement.
\begin{lemma} \label{Lemma.Hyp.Before.Backward.}
Assume the conditions (A1) to (A4) and (B1) to (B3). Then, the fundamental solution $E_{hyp}^V=E_{hyp}^V(t,s,\xi)$ satisfies the estimate
\begin{equation*}
\big( |E_{hyp}^V(t,s,\xi)| \big)\lesssim \frac{\sqrt{\la(t)}}{\sqrt{\la(s)}}\left( \begin{array}{cc}
1 & 1 \\
1 & 1
\end{array} \right)
\end{equation*}
uniformly for all $(s,\xi),(t,\xi)\in \Zhyp(N)$.
\end{lemma}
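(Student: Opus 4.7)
The plan is to start from the factorisation of $E_{hyp}^V$ given by Lemma \ref{Lemma Q_m},
\[
E_{hyp}^V(t,s,\xi)=\tilde M\Big(\prod_{k=0}^{M-1}N_k(t,\xi)\Big)\E_M(t,s,\xi)\Q_M(t,s,\xi)\Big(\prod_{k=0}^{M-1}N_k^{-1}(s,\xi)\Big)\tilde M^{-1}.
\]
Since the outer factors $\tilde M,\tilde M^{-1}$, together with $N_k(t,\xi)$, $N_k^{-1}(s,\xi)$ and $\Q_M(t,s,\xi)$, are uniformly bounded $2\times 2$ matrices with constants independent of $t,s,\xi\in\Zhyp(N)$, it is enough to bound the diagonal exponential $\E_M$ entrywise by $\sqrt{\la(t)/\la(s)}$. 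Matrix multiplication with the remaining bounded factors then produces the all-ones-matrix shape on the right-hand side of the claim.

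For the diagonal matrix $\E_M=\diag\big(e^{i\int_s^t\ta_M^+\,d\th},\,e^{i\int_s^t\ta_M^-\,d\th}\big)$ the modulus of each entry equals $\exp\big(-\int_s^t\Im\ta_M^{\pm}(\th,\xi)\,d\th\big)$. I will use the preceding lemma on the reality of $\ta_k^+-\ta_k^-$, extended to $k=M$ by the same reasoning as for $k\leq M-1$, to conclude that $\Im\ta_M^+=\Im\ta_M^-$. Denoting this common value by $\beta_M(t,\xi)=\tfrac{1}{2}\Im\tr\D_M(t,\xi)$, the two diagonal entries of $\E_M$ share the modulus $\exp\big(-\int_s^t\beta_M(\th,\xi)\,d\th\big)$, so the task reduces to estimating that single integral from below.

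Next I will compare $\beta_M$ to its initial analogue. From the explicit form of $\D_0$ derived just before the diagonalisation one reads off
\[
\tr\D_0(t,\xi)=\frac{D_t\jpnxi}{\jpnxi}=-i\,\frac{\pa_t\jpnxi}{\jpnxi},
\]
so that $\beta_0(t,\xi)=-\tfrac{1}{2}\pa_t\jpnxi/\jpnxi$ and
\[
-\int_s^t\beta_0(\th,\xi)\,d\th=\tfrac{1}{2}\log\big(\jpn_{\la(t),\om(t)}\big/\jpn_{\la(s),\om(s)}\big).
\]
By the equivalence \eqref{jpnxi.est.in.Hyp}, this equals $\tfrac{1}{2}\log(\la(t)/\la(s))$ up to a uniform additive constant, whose exponential is precisely the desired factor $\sqrt{\la(t)/\la(s)}$. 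To dispose of the discrepancy $\beta_M-\beta_0$, I will invoke that each diagonalisation step produces a correction to $\tr\D_k$ belonging to a symbol class strictly better than $\R_k\in S_N^{M-k}\{-k,k+1\}$; after $M$ steps the cumulative correction to $\tr\D_M$ lies in $S_N^0\{-M,M+1\}$ and, by the fourth item of Proposition \ref{Prop.Symbol.Hyp.}, is uniformly integrable in $t$ along each $\xi$-ray of $\Zhyp(N)$. Consequently $\int_s^t(\beta_M-\beta_0)(\th,\xi)\,d\th=O(1)$ uniformly in $\xi$, and exponentiation yields the entrywise bound on $\E_M$.

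The main obstacle is the bookkeeping of the successive diagonal corrections $\tr\D_{k+1}-\tr\D_k$: one must verify that at every step the newly generated trace terms genuinely gain the expected factor $\jpnxi^{-1}\Xi^{-1}$ relative to the previous ones, so that after $M$ steps the accumulated correction lands in $S_N^0\{-M,M+1\}$. This is precisely the refinement of the Yagdjian-type diagonalisation due to Hirosawa and relies crucially on the stabilization hypothesis (A4) feeding Proposition \ref{Prop.Symbol.Hyp.}(4); without (A4) the higher-step corrections would not be integrable uniformly in $\xi$. Once this is in place, combining the estimates above with the uniform boundedness of the non-diagonal factors in Lemma \ref{Lemma Q_m} completes the proof.
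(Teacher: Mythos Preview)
Your overall strategy is correct and matches the paper's implicit argument: start from the factorisation in Lemma~\ref{Lemma Q_m}, observe that all factors except $\E_M$ are uniformly bounded, and reduce the problem to bounding the common modulus of the diagonal entries of $\E_M$ via the reality of $\tau_M^+-\tau_M^-$ and the identity $\beta_0=-\tfrac12\,\partial_t\log\jpnxi$ together with~\eqref{jpnxi.est.in.Hyp}.

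The gap is in your control of $\beta_M-\beta_0$. You claim that the cumulative diagonal correction $\tr\D_M-\tr\D_0$ lies in $S_N^0\{-M,M+1\}$ and then invoke Proposition~\ref{Prop.Symbol.Hyp.}(4). This is not true. At the first step the correction $\D_1-\D_0$ has leading part given by the diagonal of $\R_0N^{(0)}$, with $\R_0\in S_N^{M-1}\{0,1\}$ and $N^{(0)}\in S_N^{M-1}\{-1,1\}$; hence $\D_1-\D_0\in S_N^{M-1}\{-1,2\}$. Subsequent corrections improve, but the sum is dominated by this first term, which stays in $S\{-1,2\}$. Nothing in (A1)--(A4) guarantees that $S\{-1,2\}$ is uniformly integrable over $\Zhyp(N)$ (the examples in Section~6 show that $|\xi|^{-1}\!\int\la^{-1}\Xi^{-2}\,d\theta$ is in general unbounded there), so Proposition~\ref{Prop.Symbol.Hyp.}(4) does not apply.

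The fix is elementary and is the argument intended by the references to \cite{Hir07} and \cite{H-W.Non-effective}. From the operator identity $(D_t-\D_k-\R_k)N_k=N_k(D_t-\D_{k+1}-\R_{k+1})$ and the antidiagonality of $\R_k,\R_{k+1}$ one obtains, by taking traces,
\[
\tr\D_{k+1}-\tr\D_k=-\tr\big(N_k^{-1}D_tN_k\big)=i\,\partial_t\log\det N_k,
\]
and hence $\Im\tr\D_{k+1}-\Im\tr\D_k=\partial_t\log|\det N_k|$. Integrating from $s$ to $t$ and summing over $k=0,\dots,M-1$ gives
\[
\int_s^t\big(\beta_M-\beta_0\big)\,d\theta=\tfrac12\sum_{k=0}^{M-1}\log\frac{|\det N_k(t,\xi)|}{|\det N_k(s,\xi)|}=O(1),
\]
since each $N_k$ and each $N_k^{-1}$ is uniformly bounded on $\Zhyp(N)$ by Lemma~\ref{Lemma Q_m}. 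No symbol-class integrability beyond that of $\R_M$ is needed. With this replacement, your argument goes through and yields the claimed bound.
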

After constructing the fundamental solution $E_{hyp}^V=E_{hyp}^V(t,s,\xi)$ we use the backward \quote{dissipative} transformation to the Fourier transformed original Cauchy problem \eqref{Eq.first.Fourier}. Thus, we get the following estimate for the fundamental solution $E_{hyp}=E_{hyp}(t,s,\xi)$ in the hyperbolic zone.
\begin{corollary} \label{Cor.Est.Hyp.Zone}
The fundamental solution $E_{hyp}=E_{hyp}(t,s,\xi)$ satisfies the estimate
\begin{equation*}
\big( |E_{hyp}(t,s,\xi)| \big)\lesssim \frac{\sqrt{\la(t)}}{\sqrt{\la(s)}}\exp \Big( -\frac{1}{2}\int_{s}^{t}\rho(\th)\om(\th)d\th  \Big)
\left( \begin{array}{cc}
1 & 1 \\
1 & 1
\end{array} \right)
\end{equation*}
for all $(s,\xi),(t,\xi)\in \Zhyp(N)$.
\end{corollary}
\subsection{Considerations in the oscillation subzone}
We have already chosen the hyperbolic zone as large as possible to cope with the stronger oscillating behavior of $\om=\om(t)$. For this reason we have a zone between the reduced zone $\Zred(\ve)$ and the hyperbolic zone $\Zhyp(N)$, the so-called \textit{oscillation subzone} $\Zosc(N,\ve)$. The basic approach in this zone bases on \cite{H-W09}. Essentially, in $\Zosc(N,\ve)$ we relate the fundamental solution $E_{osc}=E_{osc}(t,s,\xi)$ to the fundamental solution $E_\la=E_\la(t,s,\xi)$ to the corresponding model with $\om(t)\equiv 1$ and effective dissipation case. Note that $\Zosc(N,\ve)\subset \Zhyp^{\la}(N)$ such that we can use the known estimates for $E_\la=E_\la(t,s,\xi)$ from \cite{BuiReiBook} (see Lemma 3.5 of \cite{BuiReiBook}). This estimate reads as follows:
\begin{equation*}
\big( |E_\la(t,s,\xi)| \big)\lesssim \frac{\sqrt{\la(t)}}{\sqrt{\la(s)}}\exp \Big( -\frac{1}{2}\int_{s}^{t}\rho(\th)d\th  \Big)
\left( \begin{array}{cc}
1 & 1 \\
1 & 1
\end{array} \right).
\end{equation*}
Let us introduce the micro-energy $U(t,\xi)=\big( \la(t)|\xi|\fu,D_t\fu \big)^T$. Then, we obtain from \eqref{Eq.first.Fourier} the system of first order
\begin{equation} \label{matrix.Osc.zone}
D_tU=\underbrace{\left( \begin{array}{cc}
\frac{D_t\la(t)}{\la(t)} & \la(t)|\xi| \\
\la(t)\om^2(t)|\xi| & i\rho(t)\om(t)
\end{array} \right)}_{A(t,\xi)}U.
\end{equation}
Our aim is to construct the corresponding fundamental solution, that is, the matrix-valued solution of the system
\begin{equation*}
D_tE_{osc}(t,s,\xi)=A(t,\xi)E_{osc}(t,s,\xi), \quad E_{osc}(s,s,\xi)=I, \,\, 0\leq s\leq t.
\end{equation*}
If we set formally $\om(t)\equiv1$ and define the micro-energy to the corresponding model by $U_\la(t,\xi)=\big( \la(t)|\xi|\fu,D_t\fu \big)^T$, then it satisfies the system of first order
\begin{equation*}
D_tU_\la=\underbrace{\left( \begin{array}{cc}
\frac{D_t\la(t)}{\la(t)} & \la(t)|\xi| \\
\la(t)|\xi| & i\rho(t)
\end{array} \right)}_{A_\la(t,\xi)}U_\la.
\end{equation*}
We denote the corresponding fundamental solution as $E_\la=E_\la(t,s,\xi)$, i.e., the solution to
 \begin{equation*}
D_tE_\la(t,s,\xi)=A_\la(t,\xi)E_\la(t,s,\xi), \quad E_\la(s,s,\xi)=I, \,\, 0\leq s\leq t.
\end{equation*}
Hence, in $\Zosc(N,\ve)$ we relate $E_{osc}(t,s,\xi)$ to $E_\la(t,s,\xi)$ and use the \textit{stabilization condition} (A3).
\begin{corollary} \label{Cor.est.osc.zone}
Assume the conditions (A1) to (A3) and (B1). Then, the fundamental solution $E_{osc}=E_{osc}(t,s,\xi)$ satisfies the estimate
\begin{equation*}
\big( |E_{osc}(t,s,\xi)| \big) \lesssim \frac{\sqrt{\la(t)}}{\sqrt{\la(s)}}\exp \bigg(-\frac{1}{2}\int_{s}^{t}\rho(\th)d\th  \bigg)
\end{equation*}
uniformly for $(s,\xi), (t,\xi)\in \Zosc(N,\ve), \, \,0\leq s\leq t$.
\end{corollary}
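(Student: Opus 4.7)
The idea is to relate the fundamental solution $E_{osc}(t,s,\xi)$ to the reference fundamental solution $E_\la(t,s,\xi)$ by a Peano-Baker type perturbation argument, so that the claimed estimate is inherited from the known bound for $E_\la$. The perturbation matrix has the convenient form
\[
A(t,\xi)-A_\la(t,\xi)=\left(\begin{array}{cc} 0 & 0 \\ \la(t)|\xi|(\om^2(t)-1) & i\rho(t)(\om(t)-1)\end{array}\right),
\]
so it only involves the deviation $\om-1$ (with $|\om^2-1|\lesssim|\om-1|$ by boundedness of $\om$ in (A2)). I would make the ansatz $E_{osc}(t,s,\xi)=E_\la(t,s,\xi)\,Q(t,s,\xi)$, for which $Q$ satisfies
\[
D_t Q(t,s,\xi)=E_\la^{-1}(t,s,\xi)\bigl(A(t,\xi)-A_\la(t,\xi)\bigr)E_\la(t,s,\xi)\,Q(t,s,\xi),\,\, Q(s,s,\xi)=I,
\]
and admits a Peano-Baker series representation. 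Uniform boundedness of $Q$ in $\Zosc(N,\ve)$ would then yield the desired bound via $(|E_{osc}|)\leq (|E_\la|)(|Q|)$.

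In the hyperbolic zone of the $\la$-model, after the dissipative transformation and the diagonalization procedure, $E_\la(t,s,\xi)$ factors as the scalar amplification $\frac{\sqrt{\la(t)}}{\sqrt{\la(s)}}\exp(-\frac{1}{2}\int_s^t\rho(\th)d\th)$ times a uniformly bounded, uniformly invertible matrix. Hence the scalar factor cancels in the conjugation $E_\la^{-1}(\cdot)E_\la$ taken at the common time, and the uniform boundedness of $Q$ reduces to showing
\[
\int_s^t \bigl(\la(\ta)|\xi||\om(\ta)-1|+\rho(\ta)|\om(\ta)-1|\bigr)d\ta \leq C
\]
uniformly for $(s,\xi),(t,\xi)\in\Zosc(N,\ve)$.

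To verify this last estimate I would combine the zone geometry with the stabilization condition (A3). Since $\Zosc(N,\ve)\subset\Pi_{hyp}$ we have $|\xi|\geq\tfrac{1}{2}\mu(\ta)/\La(\ta)$, which by (B1) gives $\rho(\ta)\leq 2|\xi|\la(\ta)$; moreover $\Th(t)|\xi|\leq N$ by definition of $\Zosc(N,\ve)$. Combining these with (A3) yields
\[
\int_s^t\la(\ta)|\xi||\om(\ta)-1|d\ta\leq|\xi|\int_0^t\la(\ta)|\om(\ta)-1|d\ta\leq C_3|\xi|\Th(t)\leq C_3 N,
\]
and the damping-term integral is bounded in exactly the same way. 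The main obstacle, and the step I would verify most carefully, is the cancellation of the scalar amplification factors in $E_\la^{-1}(A-A_\la)E_\la$: it requires a matching lower bound on $|E_\la|$ (equivalently a uniform upper bound on $|E_\la^{-1}|$) of the same order as the upper bound quoted from \cite{BuiReiBook}, which is available in the hyperbolic zone of the $\la$-model thanks to the structure of the diagonalization construction underlying Lemma \ref{Lemma.Hyp.Before.Backward.}.
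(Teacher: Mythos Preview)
Your proposal is correct and follows precisely the approach the paper sketches: the paper does not give a detailed proof of this corollary, but states that in $\Zosc(N,\ve)$ one relates $E_{osc}$ to $E_\la$ (the fundamental solution for $\om\equiv 1$) and invokes the stabilization condition (A3), referring to \cite{H-W09} for the methodology. Your perturbation ansatz $E_{osc}=E_\la Q$, the identification of $A-A_\la$, the use of $\Th(t)|\xi|\le N$ together with (A3) to bound the $\la|\xi||\om-1|$ integral, and the use of $\rho(\ta)\le 2|\xi|\la(\ta)$ in $\Pi_{hyp}$ for the damping contribution are exactly the details that the paper's outline omits; the two-sided bound on $E_\la$ needed for the conjugation is indeed available from the diagonalization construction in the hyperbolic zone of the $\la$-model (cf.\ Lemma~3.5 in \cite{BuiReiBook}), as you correctly note.
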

\subsection{Considerations in the elliptic zone}
In a similar manner as in the hyperbolic zone we will try to cope with the stronger oscillating behavior of $\om=\om(t)$ by shrinking the elliptic zone. For this reason, we shall enlarge the dissipative zone in the extended phase space, since we do not propose any oscillation subzone between these two zones. \\
In the elliptic zone we can follow the standard diagonalization procedure. That is, contrary to the hyperbolic zone $\Zhyp(N)$, we will perform two diagonalization steps to derive an estimate for the fundamental solution. The considerations are based on the papers \cite{BuiReiBook} and \cite{Wirth.eff.}. \\
Now let us introduce the following family of symbol classes in $\Zell(d_0,\ve)$.
\begin{definition}
A function $f=f(t,\xi)$ belongs to the elliptic symbol class $S^l\{m_1,m_2\}$ of limited smoothness if the derivatives of $f$ satisfy the estimates
\begin{equation*}
\big| D_t^kf(t,\xi) \big|\leq C_{k}\jpnxi^{m_1}\Xi(t)^{-m_2-k}
\end{equation*}
for all $(t,\xi)\in \Zell(d_0,\ve)$ and all $k\leq l$ with $l \in \mathbb{N}_0$.
\end{definition}
Note that the auxiliary symbol $\jpnxi$ can be estimated in $\Zell(d_0,\ve)$ by
\begin{equation} \label{20}
\jpnxi \approx\frac{\rho(t)}{2}\approx\mu(t)\frac{\la(t)}{2\La(t)}.
\end{equation}
Some useful properties of the symbolic calculus are collected in the following proposition.
\begin{proposition} \label{Prop.Symbol.Ell.}  The following statements are true:
\begin{enumerate}
\item $S^l\{m_1,m_2\}$ is a vector space for all non-negative integers $l$;
\item $S^l\{m_1,m_2\}\cdot S_N^{l'}\{m_1',m_2'\}\hookrightarrow S^{\tilde{l}}\{m_1+m_1',m_2+m_2'\}$
for all non-negative integers $l$ and $l'$ with $\tilde{l}=\min\{l,l'\}$;
\item $D_t^kS^l\{m_1,m_2\}\hookrightarrow S^{l-k}\{m_1,m_2+k\}$
for all non-negative integers $l$ with $k\leq l$;
\item $S^{l-2}\{-1,2\}\hookrightarrow L_{\xi}^{\iy}L_t^1\big( \Zell(d_0,\ve) \big)$ for $l\geq2$.
\end{enumerate}
\end{proposition}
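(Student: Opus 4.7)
The structure mirrors that of Proposition \ref{Prop.Symbol.Hyp.}: items (1)--(3) are elementary bookkeeping directly from the defining inequality \eqref{Def.Symbol.Class.Ell.}, while item (4) is the substantive claim that will draw on both (A5) and the very definition of the separating line $t_{diss}(|\xi|)$.

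For (1), the estimate \eqref{Def.Symbol.Class.Ell.} is preserved under linear combinations since it is a pointwise absolute-value bound with additive constants. For (2), given $f\in S^l\{m_1,m_2\}$ and $g\in S_N^{l'}\{m_1',m_2'\}$, I would apply the Leibniz rule
\[ D_t^k(fg) = \sum_{j=0}^{k}\binom{k}{j}(D_t^jf)(D_t^{k-j}g) \]
and bound each summand by $\jpnxi^{m_1+m_1'}\Xi(t)^{-(m_2+m_2'+k)}$ on the overlap of the two zones, for $k\leq \tilde{l}=\min\{l,l'\}$. For (3), the inclusion follows from $D_t^j(D_t^kf) = D_t^{j+k}f$ and the defining estimate applied with total index $j+k\leq l$, which gives a symbol of type $\{m_1,m_2+k\}$ with smoothness $l-k$.

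The main task is (4). For $f\in S^{l-2}\{-1,2\}$ with $l\geq 2$ I only need the zeroth-order bound $|f(t,\xi)| \leq C\jpnxi^{-1}\Xi^{-2}(t)$. By \eqref{20} we have $\jpnxi \sim \mu(t)\la(t)/(2\La(t))$ in $\Zell(d_0,\ve)$, while the defining inequality of $\Pi_{ell}$, namely $|\xi| < \mu(t)/(2\La(t))$, yields $\La(t)/\mu(t) \leq 1/(2|\xi|)$. Combining these observations,
\[ |f(t,\xi)| \lesssim \frac{\La(t)}{\mu(t)\la(t)\Xi^2(t)} \lesssim \frac{1}{|\xi|\,\la(t)\,\Xi^2(t)}. \]
Integrating in $t$ over the elliptic slice $[t_{diss}(|\xi|),t_{ell}(|\xi|)]$, extending the upper limit to infinity, and invoking the integral representation of (A5), I obtain
\[ \int_{t_{diss}}^{t_{ell}} |f(\ta,\xi)|\,d\ta \;\lesssim\; \frac{1}{|\xi|}\int_{t_{diss}(|\xi|)}^{\iy}\la^{-1}(\ta)\Xi^{-2}(\ta)\,d\ta \;=\; \frac{1}{|\xi|\,F(\La(t_{diss}(|\xi|)))}. \]
The curve $t_{diss}(|\xi|)$ is defined by $|\xi| = d_0/F(\La(t_{diss}))$, so $F(\La(t_{diss})) = d_0/|\xi|$ and the right-hand side collapses to $1/d_0$, uniformly in $\xi$. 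This gives $f\in L_\xi^\iy L_t^1(\Zell(d_0,\ve))$.

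The principal subtlety, as in the hyperbolic analog, is not a technical estimate but the recognition that the boundary $|\xi| = d_0/F(\La(t))$ of $\Zell(d_0,\ve)$ was engineered precisely so that the factor $|\xi|^{-1}$ extracted via \eqref{20} cancels against the factor $|\xi|/d_0$ coming from (A5). This is exactly what forces the dissipative zone to use the cut-off scale $F(\La(t))^{-1}$, and any weaker cut-off would destroy the uniform integrability claimed in (4).
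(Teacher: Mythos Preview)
Your proof is correct and follows essentially the same route as the paper: both use \eqref{20} together with $|\xi|<\mu(t)/(2\La(t))$ to extract the factor $|\xi|^{-1}$, then invoke (A5) to evaluate the remaining $t$-integral as $1/F(\La(t_{diss}))$, which cancels against $|\xi|$ by the definition of $t_{diss}$. The only cosmetic difference is that the paper substitutes the differential form $\Xi^{-2}(t)=\la^2(t)F'(\La(t))/F^2(\La(t))$ and recognizes an explicit antiderivative, whereas you quote the equivalent integral identity $\int_t^\iy\la^{-1}\Xi^{-2}\,d\ta=1/F(\La(t))$ directly after extending the upper limit; both are stated in (A5) and lead to the same bound.
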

\begin{proof}
We only verify the integrability statement. Indeed, if $f=f(t,\xi)\in S^{l-2}\{-1,2\}$, then it holds
\begin{align*}
\int_{t_{diss}}^{t_{ell}} & \big| f(\ta,\xi) \big|d\ta \lesssim \int_{t_{diss}}^{t_{ell}}\frac{\Xi^{-2}(\ta)}{\jpn_{\la(\ta),\om(\ta)}}d\ta \lesssim \int_{t_{diss}}^{t_{ell}} \frac{\La(\ta)}{\mu(\ta)\la(\ta)}\frac{\la^2(\ta)F'(\La(\ta))}{F^2(\La(\ta))}d\ta \\
& \lesssim \frac{1}{|\xi|}\int_{t_{diss}}^{t_{ell}}\frac{\la(\ta)F'(\La(\ta))}{F^2( \La(\ta))}d\ta = -\frac{1}{|\xi|}\frac{1}{F\big( \La(\ta) \big)}\Big|_{t_{diss}}^{t_{ell}} \lesssim \frac{1}{|\xi|F( \La(t_{diss}))}\lesssim 1,
\end{align*}
where we used (\ref{20}), the definition of the elliptic region, $\Xi(t)=\frac{F(\La(t))}{\lambda(t)\sqrt{F'(\La(t))}}$ and $|\xi|F\big( \La(t_{diss}) \big)=d_0$, respectively. \qed
\end{proof}
In the elliptic zone we introduce the micro-energy $V=\big( \jpnxi v,D_t v \big)^T$ for all $t\geq s \geq 0$ and $(t,\xi),(s,\xi)\in \Zell(d_0,\ve)$. Then, the corresponding first order system to the Cauchy problem \eqref{after.dissipative}, with respect to the micro-energy $V$, is stated as
\begin{equation*}
D_tV=\left( \begin{array}{cc}
0 & \jpnxi \\
-\jpnxi & 0
\end{array} \right)V + \left( \begin{array}{cc}
\frac{D_t\jpnxi}{\jpnxi} & 0 \\ [5pt]
-\frac{(\rho(t)\om(t))'}{2\jpnxi} & 0
\end{array} \right)V.
\end{equation*}
Performing the diagonalization procedure we get after the second step of the diagonalization that the entries of the remainder matrix are uniformly integrable over the elliptic zone.
\paragraph{Step 1.} We denote by $P$ the matrix consisting of eigenvectors of the first matrix and its inverse $P^{-1}$. So, we have
\[ P = \left( \begin{array}{cc}
i & -i \\
1 & 1
\end{array} \right), \quad  P^{-1}=\frac{1}{2}\left( \begin{array}{cc}
-i & 1 \\
i & 1
\end{array} \right). \]
Then, defining $V^{(0)}:=\tilde{M}^{-1}V$ we get the system
\begin{equation*}
D_tV^{(0)}=\big( \D(t,\xi)+\R(t,\xi) \big)V^{(0)},
\end{equation*}
where
\begin{equation*}
\D(t,\xi)=\left( \begin{array}{cc}
-i\jpnxi & 0 \\
0 & i\jpnxi
\end{array} \right),
\end{equation*}
and
\begin{equation*}
\R(t,\xi)=\frac{1}{2} \left( \begin{array}{cc}
\frac{D_t\jpnxi}{2\jpnxi}-i\frac{( \rho(t)\om(t) )'}{4\jpnxi} & -\frac{D_t\jpnxi}{2\jpnxi}+i\frac{( \rho(t)\om(t) )'}{4\jpnxi} \\ [5pt]
-\frac{D_t\jpnxi}{2\jpnxi}-i\frac{( \rho(t)\om(t) )'}{4\jpnxi} & \frac{D_t\jpnxi}{2\jpnxi}+i\frac{( \rho(t)\om(t) )'}{4\jpnxi}
\end{array} \right).
\end{equation*}
Then, we obtain
\[ \D(t,\xi)\in S^M\{1,0\}, \quad \R(t,\xi)\in S^{M-1}\{0,1\}. \]
\paragraph{Step 2.} Let us introduce $F_0(t,\xi)=\diag\R(t,\xi)$. Now we carry out the next step(s) of the diagonalization. The difference of the diagonal entries of the matrix $\D(t,\xi)+F_0(t,\xi)$ is
\begin{equation*}
i\al(t,\xi)=2\jpnxi+\frac{\big( \rho(t)\om(t) \big)'}{2\jpnxi}\approx 2\jpnxi+\frac{\smallO{\rho^2(t)\om^2(t)}}{2\jpnxi}\approx \jpnxi
\end{equation*}
for $t\geq t_0$ with a sufficiently large $t_0=t_0(\ve)$ by using $|\big( \rho(t)\om(t) \big)'|=\smallO{\rho^2(t)\om^2(t)}$.
Now we can follow the usual diagonalization procedure. Therefore, we choose a matrix $N^{(1)}=N^{(1)}(t,\xi)$. Let
\begin{align*}
N^{(1)}(t,\xi) & = \left( \begin{array}{cc}
0 & -\frac{\R_{12}}{\al} \\
\frac{\R_{21}}{\al} & 0
\end{array} \right) \\
& \approx \left( \begin{array}{cc}
0 & i\frac{D_t\jpnxi}{4\jpnxi^2}-\frac{(\rho(t)\om(t))'}{8\jpnxi^2} \\
i\frac{D_t\jpnxi}{4\jpnxi^2}+\frac{(\rho(t)\om(t))'}{8\jpnxi^2} &0
\end{array} \right).
\end{align*}
Taking into consideration the rules of the symbolic calculus we have
\[ N^{(1)}(t,\xi)\in S^{M-1}\{-1,1\} \,\,\,\, \text{and} \,\,\,\, N_1(t,\xi)=I+N^{(1)}(t,\xi)\in S^{M-1}\{0,0\}. \]
For a sufficiently large time $t\geq t_0$ the matrix $N_1=N_1(t,\xi)$ is invertible with uniformly bounded inverse $N_1^{-1}=N_1^{-1}(t,\xi)$. Indeed, in the elliptic zone it holds
\[ \big| N_1(t,\xi)-I \big| \leq \frac{\Xi^{-1}(t)}{\jpnxi}\lesssim \frac{\La(t)}{\mu(t)\Th(t)}\ra 0 \,\,\text{for}\,\, t\ra \iy \]
due to the assumption (B5). Let
\[ B^{(1)}(t,\xi)=D_tN^{(1)}(t,\xi)-\big( \R(t,\xi)-F_0(t,\xi) \big)N^{(1)}(t,\xi)\in S^{M-2}\{-1,2\}, \]
\[ \R_1(t,\xi)=-N_1^{-1}(t,\xi)B^{(1)}(t,\xi)\in S^{M-2}\{-1,2\}. \]
Then, we have the following operator identity:
\begin{equation*}
\big( D_t-\D(t,\xi)-\R(t,\xi) \big)N_1(t,\xi)=N_1(t,\xi)\big( D_t-\D(t,\xi)-F_0(t,\xi)-\R_1(t,\xi) \big).
\end{equation*}
Hence, the previous steps of the diagonalization procedure give us the following lemma.
\begin{lemma}
Assume that $\la=\la(t)$, $\om=\om(t)$ satisfy the conditions (A1), (A2) and (A5) and $\rho=\rho(t)$ satisfies the conditions (B1), (B2) and (B5). Then, there exists a sufficiently large $t_0$ such that in $\Zell(d_0,\ve)$ the following statements hold:
\begin{itemize}
\item $N_1 \in S^{M-1}\{0,0\}$, invertible for $(t,\xi)\in\Zell(d_0,\ve)$ with $N_1^{-1} \in S^{M-1}\{0,0\};$
\item $F_0=\diag \Big( \frac{D_t\jpnxi}{2\jpnxi}-\frac{i( \rho(t)\om(t) )'}{4\jpnxi}, \frac{D_t\jpnxi}{2\jpnxi}+\frac{i( \rho(t)\om(t) )'}{4\jpnxi} \Big)\in S^{M-1}\{0,1\};$
\item $\R_1 \in S^{M-2}\{-1,2\}$ with $M\geq 2$.
\end{itemize}
Moreover, the operator identity
\[ \big( D_t-\D(t,\xi)-\R(t,\xi) \big)N_1(t,\xi)=N_1(t,\xi)\big( D_t-\D(t,\xi)-F_0(t,\xi)-\R_1(t,\xi) \big) \]	
holds for all $(t,\xi)\in\Zell(d_0,\ve)$.
\end{lemma}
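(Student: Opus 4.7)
The lemma merely packages the symbolic-calculus and invertibility statements for the objects already constructed in Step~1 and Step~2, so my plan is to verify each of the four claims by applying the rules of Proposition~\ref{Prop.Symbol.Ell.} to the explicit matrices defined above, to establish invertibility of $N_1$ via a Neumann series argument driven by assumption (B5), and to read the operator identity off the construction of $N^{(1)}$.

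I would begin with the symbol class of $F_0$. From the auxiliary estimates $|D_t^k \jpnxi| \lesssim \jpnxi\,\Xi^{-k}(t)$ and $|D_t^k(\rho(t)\om(t))| \lesssim \jpnxi\,\Xi^{-k}(t)$ (valid in $\Zell(d_0,\ve)$ by (A1), (A2), (B1), (B2) combined with the equivalence \eqref{20}), the entry-wise formula for $\R$ from Step~1 yields $\R \in S^{M-1}\{0,1\}$, so the diagonal part $F_0 = \diag \R$ inherits this class, giving $F_0 \in S^{M-1}\{0,1\}$. Next, for $N^{(1)}$ I would exploit that $i\al \sim \jpnxi$ for $t \geq t_0$ large, which follows from $|(\rho\om)'| = \smallO{(\rho\om)^2}$ (this is (B5)); the ellipticity of $\al$ on $\Zell(d_0,\ve)$ combined with $\al \in S^{M-1}\{1,0\}$ then gives $1/\al \in S^{M-1}\{-1,0\}$, and the product rule yields $N^{(1)} \in S^{M-1}\{-1,1\}$, hence $N_1 = I + N^{(1)} \in S^{M-1}\{0,0\}$. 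Invertibility follows from the uniform decay $|N_1(t,\xi) - I| \lesssim \Xi^{-1}(t)/\jpnxi \lesssim \La(t)/(\mu(t)\Th(t)) \to 0$ provided by (B5); thus for a sufficiently large $t_0$ the Neumann series converges and the calculus preserves $N_1^{-1} \in S^{M-1}\{0,0\}$.

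For $\R_1 = -N_1^{-1} B^{(1)}$ with $B^{(1)} = D_t N^{(1)} - (\R - F_0) N^{(1)}$, the differentiation rule of Proposition~\ref{Prop.Symbol.Ell.} gives $D_t N^{(1)} \in S^{M-2}\{-1,2\}$ and the product rule gives $(\R - F_0) N^{(1)} \in S^{M-1}\{-1,2\}$; taking the minimum of the smoothness indices yields $B^{(1)} \in S^{M-2}\{-1,2\}$, and multiplication by $N_1^{-1} \in S^{M-1}\{0,0\}$ preserves the class, producing $\R_1 \in S^{M-2}\{-1,2\}$ — this is exactly where the assumption $M \geq 2$ enters. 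The operator identity then follows by a direct expansion: the entries of $N^{(1)}$ were chosen precisely to solve the commutator equation $[\D, N^{(1)}] = -(\R - F_0)$ modulo the antidiagonal error absorbed into $\R_1$, and grouping the remaining terms in $(D_t - \D - \R)(I + N^{(1)})$ reproduces $N_1(D_t - \D - F_0 - \R_1)$ with $\R_1$ as defined. The main delicacy I anticipate is ensuring that the threshold time $t_0$ for invertibility of $N_1$ can be taken uniform in $\xi$ across $\Zell(d_0,\ve)$; this uniformity is exactly what (B5) provides in the form $\mu(t)\Th(t)/\La(t) \to \iy$.
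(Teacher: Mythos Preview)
Your proposal is correct and follows essentially the same approach as the paper: the lemma is a summary of the explicit construction in Steps~1 and~2, and the paper's proof amounts to exactly the symbol-class bookkeeping via Proposition~\ref{Prop.Symbol.Ell.}, the invertibility argument for $N_1$ from $|N_1-I|\lesssim \La(t)/(\mu(t)\Th(t))\to 0$ by (B5), and the definition $\R_1=-N_1^{-1}B^{(1)}$ that you outline. One small refinement: the off-diagonal entries of $N^{(1)}$ are chosen with the denominator $\alpha$ coming from the difference of the diagonal entries of $\D+F_0$ (not $\D$ alone), so the commutator relation underlying the operator identity is more precisely $[\D+F_0,N^{(1)}]=-(\R-F_0)$; your phrase ``modulo the antidiagonal error absorbed into $\R_1$'' covers this, but it is worth being explicit when you write out the computation.
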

\paragraph{Step 3. Construction of the fundamental solution.} In order to solve the transformed system and construct its fundamental solution we can not follow the considerations from the theory of the hyperbolic zone, since the main diagonal entries are purely imaginary.
\begin{lemma} \label{Lem.Est.Ell.Zone}
Assume the conditions (A1), (A2), (A5) and (B1), (B2), (B5). Then, the fundamental solution $E_{ell}^{V}=E_{ell}^{V}(t,s,\xi)$ to the transformed operator
\[ D_t-\D(t,\xi)-F_0(t,\xi)-\R_1(t,\xi) \]
can be estimated by
\begin{equation*}
\big( |E_{ell}^{V}(t,s,\xi)| \big) \lesssim \frac{\jpnxi}{\jpn_{\la(s),\om(s)}} \exp \bigg( \int_{s}^{t}\jpn_{\la(\ta),\om(\ta)}d\ta \bigg)
\left( \begin{array}{cc}
1 & 1 \\
1 & 1
\end{array} \right),
\end{equation*}
with $(t,\xi),(s,\xi)\in \Zell(d_0,\ve)\cap \{t\geq t_0(\ve)\}, \,0\leq s\leq t$.
\end{lemma}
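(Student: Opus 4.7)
The plan is to integrate the diagonal part $\D + F_0$ of the reduced operator exactly and then treat the remainder $\R_1$ by a Peano--Baker series, exploiting that $\R_1 \in S^{M-2}\{-1,2\} \hookrightarrow L^1_t(\Zell(d_0,\ve))$ by Proposition~\ref{Prop.Symbol.Ell.}.

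First I would introduce the diagonal fundamental solution
\[
\E(t,s,\xi) := \exp\Big( i \int_s^t \big(\D(\ta,\xi) + F_0(\ta,\xi)\big) d\ta \Big)
\]
and compute it explicitly. Since $iD_t=\partial_t$, the matrix $i(\D+F_0)$ is diagonal with real entries $\pm\jpn_{\la(\ta),\om(\ta)} + \partial_\ta \jpn_{\la(\ta),\om(\ta)}/(2\jpn_{\la(\ta),\om(\ta)}) \pm (\rho(\ta)\om(\ta))'/(4\jpn_{\la(\ta),\om(\ta)})$ with matching signs. The first summand integrates to $\pm\int_s^t \jpn_{\la(\ta),\om(\ta)} d\ta$; the second to $\tfrac12 \ln(\jpnxi/\jpn_{\la(s),\om(s)})$, contributing a prefactor $\sqrt{\jpnxi/\jpn_{\la(s),\om(s)}}$; the third I would bound using \eqref{20} and assumption (B5) by another $\tfrac12\ln(\jpnxi/\jpn_{\la(s),\om(s)})$ up to a bounded error, producing a further $\sqrt{\jpnxi/\jpn_{\la(s),\om(s)}}$. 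Collecting the factors gives
\[
|\E(t,s,\xi)| \lesssim \frac{\jpnxi}{\jpn_{\la(s),\om(s)}}\exp\Big(\int_s^t \jpn_{\la(\ta),\om(\ta)} d\ta\Big) I.
\]

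Next, writing $E_{ell}^V = \E\,\Q$ the factor $\Q$ satisfies the Volterra equation $\Q = I + i\int_s^t \E^{-1}\R_1\E\,\Q\,d\ta$. The diagonal entries of $\E^{-1}\R_1\E$ coincide with those of $\R_1$ and are $L^1_t$ by Proposition~\ref{Prop.Symbol.Ell.}; the $(1,2)$ entry carries the decaying ratio $\E_{22}/\E_{11}$ and is integrable as well. The delicate piece, and the main obstacle of the proof, is the $(2,1)$ entry, amplified by the growing factor $\E_{11}/\E_{22}=\exp(2\int_s^\ta\jpn_{\la(\ta'),\om(\ta')} d\ta')$, which prevents a naive uniform bound on $\Q$. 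The resolution is to observe that this exponential amplification is exactly compensated when $E_{ell}^V=\E\,\Q$ is reassembled: the outer decaying factor $\E_{22}$ in the second row absorbs the growth, so every entry of $E_{ell}^V$ inherits the single upper bound of the growing exponential. Equivalently, one runs a Gronwall argument in the weighted $\ell^\infty$-norm $|v|:=\max(|v_1|/|\E_{11}|,|v_2|/|\E_{22}|)$, in which the perturbation is uniformly $L^1_t$ over $\Zell(d_0,\ve)$. The threshold $t\geq t_0(\ve)$ is precisely what makes $|N_1-I|$ small (via (B5)) so that the iteration is well-defined and convergent. Combining the bounds from the two steps entry-wise yields the claimed all-ones matrix estimate.
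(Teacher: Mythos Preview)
Your overall strategy---integrate the diagonal part exactly, treat $\R_1$ as an integrable perturbation, and deal with the exponential amplification in the $(2,1)$ slot---is the standard route the paper has in mind (it simply cites \cite{BuiReiBook} and \cite{Wirth.eff.}). The computation of the diagonal solution $\E$ and the identification of the problematic amplified entry are correct.

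There is, however, a genuine gap in your ``equivalently'' reformulation. In the weighted norm $|v|:=\max\big(|v_1|/|\E_{11}|,|v_2|/|\E_{22}|\big)$ the perturbation is \emph{not} uniformly $L^1_t$: the $(2,1)$ entry of $\E^{-1}\R_1\E$ is $r_{21}\,\E_{11}/\E_{22}$, which carries the growing factor $\exp\big(2\int_s^\ta\jpn_{\la,\om}\,d\ta'\big)$ and cannot be integrated. So a plain Gronwall in this norm fails, and your two descriptions are not equivalent.

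The fix used in \cite{Wirth.eff.} (and implicitly here) is to factor out only the \emph{scalar} dominant weight. Write $E_{ell}^V=\exp\big(i\int_s^t\tau_1\,d\ta\big)\,\Q$ with $\tau_1=(\D+F_0)_{11}$. Then
\[
D_t\Q=\big(\D+F_0-\tau_1 I+\R_1\big)\Q,\qquad \D+F_0-\tau_1 I=\diag\big(0,\tau_2-\tau_1\big),
\]
and $i(\tau_2-\tau_1)=-2\jpnxi-(\rho\om)'/(2\jpnxi)$ has nonpositive real part on $\Zell(d_0,\ve)\cap\{t\ge t_0(\ve)\}$ by (B5). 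A Duhamel representation for each row of $\Q$ then shows $|\Q|$ is uniformly bounded because the diagonal generates a contraction and $\R_1\in S^{M-2}\{-1,2\}\hookrightarrow L^1_t$. The claimed matrix estimate follows from $|E_{ell}^V|\le|\exp(i\int_s^t\tau_1)|\cdot|\Q|$. Your heuristic ``the outer decaying factor $\E_{22}$ absorbs the growth'' is morally this statement, but the rigorous version needs the single-scalar factorization rather than the two-weight norm you wrote.
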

In order to prove Lemma \ref{Lem.Est.Ell.Zone} one can proceed in the same manner as in the papers \cite{BuiReiBook} and \cite{Wirth.eff.}.
\paragraph{Step 4. Transforming back to the original Cauchy problem.} Now we want to obtain an estimate for the energy of the solution to our original Cauchy problem. For this reason we need to transform back to get an estimate of the fundamental solution $E_{ell}=E_{ell}(t,s,\xi)$ which is related to a system of first order for the micro-energy $\big( \la(t)|\xi|\fu,D_t\fu \big)^T.$
\begin{lemma} \label{Lem.Ell.Aux.Est.}
Under the assumptions (B1) to (B3) the following holds:
\begin{itemize}
\item [1.] In the elliptic zone it holds $\jpnxi-\dfrac{\rho(t)\om(t)}{2} \leq -\dfrac{\la^2(t)\om(t)|\xi|^2}{\rho(t)}$,
\item [2.] $\dfrac{\de(s)}{\de(t)}\exp \Big( \displaystyle \int_{s}^{t}\jpn_{\la(\ta),\om(\ta)}d\ta \Big)\leq \exp
\Big( -|\xi|^2\displaystyle \int_{s}^{t}\dfrac{\la^2(\ta)\om(\ta)}{\rho(\ta)}d\ta \Big),$
\end{itemize}
where $\de=\de(t)=\exp\big( \frac{1}{2}\int_{0}^{t}\rho(\ta)\om(\ta)d\ta \big)$.	
\end{lemma}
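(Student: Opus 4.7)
The statement has two parts, and the second is an essentially immediate consequence of the first combined with the definition of $\de(t)$. I would therefore concentrate the work on part 1 and treat part 2 as a short corollary.

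\textbf{Part 1.} The first observation is that in the elliptic zone one has $|\xi|<\frac{1}{2}\frac{\mu(t)}{\La(t)}$, which by the relation $\rho(t)=\mu(t)\la(t)/\La(t)$ from (B1) is exactly $\la(t)|\xi|<\rho(t)/2$. Hence inside the square-root defining $\jpnxi$ the argument is nonpositive, the absolute value can be removed with a sign change, and one obtains the explicit identity
\[
\jpnxi=\frac{\rho(t)\om(t)}{2}\sqrt{1-\frac{4\la^2(t)|\xi|^2}{\rho^2(t)}}.
\]
Setting $x:=4\la^2(t)|\xi|^2/\rho^2(t)\in[0,1]$, the desired inequality reduces, after dividing by $\rho(t)\om(t)/2>0$ and rearranging, to the elementary scalar estimate $\sqrt{1-x}\le 1-x/2$. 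This holds for $0\le x\le 1$ since both sides are nonnegative and squaring gives $1-x\le 1-x+x^2/4$. Undoing the substitution gives exactly the claimed bound, i.e.\ $\jpnxi-\frac{\rho(t)\om(t)}{2}\le -\frac{\la^2(t)\om(t)|\xi|^2}{\rho(t)}$. The only subtle point here is remembering that the factor $\om(t)$ from $\jpnxi$ cancels a corresponding factor in $\rho(t)\om(t)/2$, which is why the final right-hand side carries $\om(t)$ but not $\om^2(t)$.

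\textbf{Part 2.} From the definition $\de(t)=\exp\bigl(\frac12\int_0^t\rho(\ta)\om(\ta)d\ta\bigr)$ one has
\[
\frac{\de(s)}{\de(t)}=\exp\!\Big(-\tfrac{1}{2}\int_s^t\rho(\ta)\om(\ta)d\ta\Big),
\]
so the left-hand side of the inequality in part 2 equals
\[
\exp\!\Big(\int_s^t\bigl[\jpn_{\la(\ta),\om(\ta)}-\tfrac{\rho(\ta)\om(\ta)}{2}\bigr]d\ta\Big).
\]
Applying the pointwise estimate from part 1 inside the integrand (valid on the elliptic zone, where both $(s,\xi)$ and $(t,\xi)$ lie) and using monotonicity of the exponential yields the asserted bound.

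\textbf{Main obstacle.} There is no real obstacle: the argument is essentially an elementary square-root inequality dressed up in notation. The only thing one has to be careful about is tracking the two different roles of $\om$: it appears quadratically inside $\jpnxi$ before the elliptic-zone sign change, and linearly after factoring out $\rho(t)\om(t)/2$. Getting this bookkeeping right is what makes the right-hand side of part 1 land on $\la^2(t)\om(t)|\xi|^2/\rho(t)$ rather than on a power of $\om$ that would not mesh with the $\de$-ratio appearing in part 2.
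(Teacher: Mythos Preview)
Your proposal is correct and follows essentially the same approach as the paper. The paper phrases the key elementary estimate as $\sqrt{x+y}\le\sqrt{x}+\frac{y}{2\sqrt{x}}$ (applied with $x=\rho^2\om^2/4$, $y=-\la^2\om^2|\xi|^2$), which after normalization is exactly your inequality $\sqrt{1-x}\le 1-x/2$; part~2 is derived in both cases by integrating part~1 and invoking the definition of $\de(t)$.
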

\begin{proof} By using the property
\[ \sqrt{x+y}\leq \sqrt{x}+\frac{y}{2\sqrt{x}} \]
for any $x\geq 0$ and $y\geq -x$, the first statement is equivalent to the following inequality:
\[ \sqrt{ \frac{\rho^2(t)\om^2(t)}{4}-\la^2(t)\om^2(t)|\xi|^2 }-\frac{\rho(t)\om(t)}{2}\leq -\frac{\la^2(t)\om(t)|\xi|^2}{\rho(t)}. \]
After integration the second statement follows directly from the first one together with the definition of $\de=\de(t)$. \qed
\end{proof}
From Lemma \ref{Lem.Est.Ell.Zone} we get for $(t,\xi), (s,\xi)\in \Zell(d_0,\ve)$ the estimate
\[ \big( |E_{ell}^V(t,s,\xi)| \big) \lesssim \frac{\rho(t)\om(t)}{\rho(s)\om(s)}\exp \Big( \int_{s}^{t}\jpn_{\la(\ta),\om(\ta)}d\ta \Big)\left( \begin{array}{cc}
1 & 1 \\
1 & 1
\end{array} \right). \]
This yields in combination with \eqref{Ell.Zone.Back.Rep.} the estimate
\begin{align} \nonumber
\big(& |E_{ell}(t,s,\xi)| \big) \\ \nonumber
& \lesssim \left( \begin{array}{cc}
\la(t)|\xi| & 0 \\ [5pt]
\rho(t) & \rho(t)
\end{array} \right)\exp \bigg( \int_{s}^{t}\Big( \jpn_{\la(\ta),\om(\ta)}-\frac{\rho(\ta)\om(\ta)}{2} \Big)d\ta \bigg)
\left( \begin{array}{cc}
1 & 1 \\
1 & 1
\end{array} \right)\left( \begin{array}{cc}
\frac{1}{\la(s)|\xi|} & 0 \\ [5pt]
\frac{1}{\la(s)|\xi|} & \frac{1}{\rho(s)}
\end{array} \right) \\ \label{Ell.Zone.Before.Ref.Est.}
& \lesssim \exp \Big( -|\xi|^2\int_{s}^{t}\frac{\la^2(\ta)\om(\ta)}{\rho(\ta)}d\ta \Big) \left( \begin{array}{cc}
\frac{\la(t)}{\la(s)} & \frac{\la(t)|\xi|}{\rho(s)} \\ [5pt]
\frac{\rho(t)}{\la(s)|\xi|} & \frac{\rho(t)}{\rho(s)}
\end{array} \right).
\end{align}
\begin{remark}
Taking into account the estimates \eqref{Ell.Zone.Before.Ref.Est.}, we see that the estimates for the first row are reasonable. However, the estimates for the second row seem to be not reasonable. Because, the estimate for $|E_{ell}^{(22)}|$ is only reasonable for decreasing coefficients $\rho=\rho(t)$ and the estimate for $|E_{ell}^{(21)}|$ is not optimal since the upper bound for $|E_{ell}^{(21)}|$ is not bounded in the elliptic zone. This \quote{contradicts somehow} the damping effect in our model. For this reason we derive a refined estimate which  we present in the next step.
\end{remark}
\paragraph{Step 5. A refined estimate for the fundamental solution in the elliptic zone.}
\begin{corollary} \label{Cor.Est.Ell.Zone}
The fundamental solution $E_{ell}=E_{ell}(t,s,\xi)$ satisfies the following estimate:
\begin{equation*}
\big( |E_{ell}(t,s,\xi)| \big) \lesssim \exp \Big( -|\xi|^2\int_{s}^{t}\frac{\la^2(\ta)\om(\ta)}{\rho(\ta)}d\ta \Big) \left( \begin{array}{cc}
\frac{\la(t)}{\la(s)} & \frac{\la(t)|\xi|}{\rho(s)} \\ [5pt]
\frac{\la^2(t)|\xi|}{\la(s)\rho(t)} & \frac{\la^2(t)|\xi|^2}{\rho(s)\rho(t)}
\end{array} \right)+\frac{\de^2(s)}{\de^2(t)}\left( \begin{array}{cc}
0 & 0 \\
0 & 1
\end{array} \right)
\end{equation*}
for all $t\geq s$ and $(t,\xi), (s,\xi)\in\Zell(d_0,\ve)$.
\end{corollary}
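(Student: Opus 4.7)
The first-row entries in the desired estimate already coincide with the bounds obtained in \eqref{Ell.Zone.Before.Ref.Est.}, so only the bottom row of $E_{ell}(t,s,\xi)$ needs to be sharpened. My plan is to sidestep the matrix formalism for the second row and work directly with the scalar second-order equation \eqref{Eq.first.Fourier}. Multiplying it by the integrating factor $G(t):=\de^2(t)=\exp\big(\int_0^t\rho(\ta)\om(\ta)d\ta\big)$ gives $\big(G(t)\fu_t(t,\xi)\big)'=-G(t)\la^2(t)\om^2(t)|\xi|^2\fu(t,\xi)$, which integrates to the Duhamel-type identity
\[
\fu_t(t,\xi)=\frac{G(s)}{G(t)}\fu_t(s,\xi)-\frac{|\xi|^2}{G(t)}\int_{s}^{t}G(\ta)\la^2(\ta)\om^2(\ta)\fu(\ta,\xi)\,d\ta
\]
for $0\leq s\leq t$. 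Reading off $E_{ell}^{(21)}$ and $E_{ell}^{(22)}$ from this identity with the two standard normalizations of initial data for the micro-energy produces the claimed estimate, provided the integral on the right is controlled appropriately.

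To estimate that integral I would plug in the sharp first-row bounds of \eqref{Ell.Zone.Before.Ref.Est.}. Set $\psi(\ta):=\exp\big(-|\xi|^2\int_s^\ta\frac{\la^2(\th)\om(\th)}{\rho(\th)}d\th\big)$. The bounds for $E_{ell}^{(11)},E_{ell}^{(12)}$ yield $|\fu(\ta,\xi)|\lesssim\psi(\ta)/(\la(s)|\xi|)$ for initial data corresponding to $(\la(s)|\xi|\fu(s),D_t\fu(s))^T=(1,0)^T$, and $|\fu(\ta,\xi)|\lesssim\psi(\ta)/\rho(s)$ for initial data $(0,1)^T$. Substituting these into the Duhamel identity yields
\[
|E_{ell}^{(21)}(t,s,\xi)|\lesssim\frac{|\xi|}{\la(s)G(t)}\int_{s}^{t}G(\ta)\la^2(\ta)\om^2(\ta)\psi(\ta)\,d\ta
\]
and
\[
|E_{ell}^{(22)}(t,s,\xi)|\lesssim\frac{\de^2(s)}{\de^2(t)}+\frac{|\xi|^2}{\rho(s)G(t)}\int_{s}^{t}G(\ta)\la^2(\ta)\om^2(\ta)\psi(\ta)\,d\ta,
\]
so the corollary reduces to proving the envelope estimate
\[
\frac{1}{G(t)}\int_{s}^{t}G(\ta)\la^2(\ta)\om^2(\ta)\psi(\ta)\,d\ta\lesssim\frac{\la^2(t)}{\rho(t)}\psi(t).
\]

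For this envelope estimate I would integrate by parts against the antiderivative $A(\ta):=G(\ta)\la^2(\ta)\om(\ta)\psi(\ta)/\rho(\ta)$. Using $G'=\rho\om G$ and $\psi'=-|\xi|^2(\la^2\om/\rho)\psi$, the derivative $A'(\ta)$ reproduces the integrand $G\la^2\om^2\psi$, minus a term of the form $|\xi|^2 G\la^4\om^2\psi/\rho^2$, plus a remainder coming from differentiating the slowly varying factor $\la^2\om/\rho$. In the elliptic zone the inequality $|\xi|^2\la^2\leq\rho^2/4$ makes the first extra term at most $\frac{1}{4}G\la^2\om^2\psi$, so it can be absorbed into the left-hand side; the remainder is of lower order by (A1), (A2), (B2) and (B5), hence also absorbable. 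The boundary contribution at $\ta=s$ is controlled by the exponential gap $G(s)/(G(t)\psi(t))\leq\exp\big(-\frac{3}{4}\int_s^t\rho(\th)\om(\th)\,d\th\big)$, which is again a consequence of $|\xi|^2\la^2\leq\rho^2/4$ in the elliptic zone combined with the effective-damping condition (B4).

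The main obstacle is precisely this envelope estimate. The delicate point is to verify that the logarithmic derivative of $A(\ta)=G(\ta)\la^2(\ta)\om(\ta)\psi(\ta)/\rho(\ta)$ is, up to a slowly varying error handled by (B5), essentially $\rho(\ta)\om(\ta)$ on $\Zell(d_0,\ve)$; this is what ultimately trades the naive $\rho(t)$ factor appearing in the second row of \eqref{Ell.Zone.Before.Ref.Est.} for the improved decay $\la^2(t)/\rho(t)$ stated in the corollary.
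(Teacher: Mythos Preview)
Your proposal is correct and follows essentially the same route as the paper. The paper also relates the second-row entries of $E_{ell}$ to the time derivatives $\partial_t\Phi_k$ of the scalar solutions $\Phi_k$ of \eqref{Eq.first.Fourier} and then invokes Duhamel's formula to bound these derivatives in terms of the already-optimal first-row estimates, citing \cite{Wirth.eff.} for the details; your integrating-factor identity $\big(\de^2(t)\fu_t\big)'=-\de^2(t)\la^2(t)\om^2(t)|\xi|^2\fu$ is precisely that Duhamel step written out explicitly, and your envelope estimate is the computation that the paper offloads to the reference. One small correction: the control of the boundary term at $\ta=s$ ultimately rests on the eventual monotonicity of $\ta\mapsto \de^2(\ta)\la^2(\ta)\om(\ta)\psi(\ta)/\rho(\ta)$, which follows from $\mu(t)\to\infty$ in (B3) together with (A1), (B2); condition (B4) is not what is used here.
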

\begin{proof} Let us assume that $\Phi_k=\Phi_k(t,s,\xi)$, $k=1,2,$ are solutions to the equation
\[ \Phi_{tt}+\la^2(t)\om^2(t)|\xi|^2\Phi+\rho(t)\om(t)\Phi_t=0 \]
with initial values $\Phi_k(s,s,\xi)=\de_{1k}$ and $\pa_t\Phi_k(s,s,\xi)=\de_{2k}$. Then, we have
\[ \left( \begin{array}{cc}
    \la(t)|\xi|v(t,\xi) \\
	D_tv(t,\xi)
	\end{array} \right)=\left( \begin{array}{cc}
	\dfrac{\la(t)}{\la(s)}\Phi_1(t,s,\xi) & i\la(t)|\xi|\Phi_2(t,s,\xi) \\ [5pt]
	\dfrac{D_t\Phi_1(t,s,\xi)}{\la(s)|\xi|} & iD_t\Phi_2(t,s,\xi)
	\end{array} \right) \left( \begin{array}{cc}
	\la(s)|\xi|v(s,\xi) \\
	D_tv(s,\xi)
	\end{array} \right). \]
Our basic idea is to relate the entries of the above given estimates to the multipliers $\Phi_k=\Phi_k(t,s,\xi)$ and use Duhamel's formula to improve the estimates for the second row using estimates from the first one (see \cite{Wirth.eff.}). \qed
\end{proof}
\begin{remark}
We are able to derive a refined estimate for the fundamental solution, because in the proof of Corollary \ref{Cor.Est.Ell.Zone} we use only estimates for $E_{ell}^{(11)}$ and $E_{ell}^{(12)}$ and both estimates seem to be optimal with our analytical tools.
\end{remark}
\begin{remark}
If we choose a fixed $s$, then the second summand in Corollary \ref{Cor.Est.Ell.Zone} is dominated by the first one. Indeed, if we set $s=t_{ell}$, then by using $\la(t_{ell})|\xi|\approx \rho(t_{ell})$ we get the following estimate:
\begin{equation*}
\big( |E_{ell}(t,s,\xi)| \big)\lesssim \exp \Big( -|\xi|^2\int_{t_{ell}}^{t}\frac{\la^2(\ta)\om(\ta)}{\rho(\ta)}d\ta \Big)
\left( \begin{array}{cc}
\frac{\la(t)}{\la(t_{ell})} & \frac{\la(t)}{\la(t_{ell})} \\ [5pt]
\frac{\la^2(t)|\xi|}{\la(t_{ell})\rho(t)} & \frac{\la^2(t)|\xi|}{\la(t_{ell})\rho(t)}
\end{array} \right).
\end{equation*}
\end{remark}
\subsection{Considerations in the dissipative zone}
In the dissipative zone we define the micro-energy $U=U(t,\xi)$ by
\[ U=\big( \ga(t)\fu,D_t\fu \big)^T, \quad \ga(t):=\frac{\la(t)}{F( \La(t))}. \]
This seems to be reasonable because we will later need to estimate $\la(t)|\xi|\fu$ and it holds $\la(t)|\xi|\lesssim \dfrac{\la(t)}{F( \La(t))}$ due to the definition of the dissipative zone. Then, the Fourier transformed Cauchy problem \eqref{Eq.first.Fourier} leads to the system of first order
\begin{equation} \label{system.diss.zone}
D_tU=\underbrace{\left( \begin{array}{cc}
\frac{D_t\ga(t)}{\ga(t)} & \ga(t) \\ [5pt]
\frac{\la^2(t)\om^2(t)|\xi|^2}{\ga(t)} & i\rho(t)\om(t)
\end{array} \right)}_{A(t,\xi)}U.
\end{equation}
We are interested in the fundamental solution \[ E_{diss}=E_{diss}(t,s,\xi)=\left( \begin{matrix}
E_{diss}^{(11)} & E_{diss}^{(12)} \\
E_{diss}^{(21)} & E_{diss}^{(22)}
\end{matrix} \right) \]
to the system \eqref{system.diss.zone}, that is, the solution of
\[ D_tE_{diss}(t,s,\xi)=A(t,\xi)E_{diss}(t,s,\xi), \,\,\, E_{diss}(s,s,\xi)=I, \]
for all $0\leq s \leq t$ and $(t,\xi), (s,\xi) \in \Zdiss(d_0)$. Thus, the solution $U=U(t,\xi)$ is represented as
\[ U(t,\xi)=E_{diss}(t,s,\xi)U(s,\xi). \]
We will use the auxiliary function
\[ \de(t)=\exp\Big( \frac{1}{2}\int_{0}^{t}\rho(\ta)\om(\ta)d\ta \Big) \]
which is related to the entry $i\rho(t)\om(t)$ of the coefficient matrix. \\
The entries $E_{diss}^{(kl)}(t,s,\xi)$, $k,l=1,2,$ of the fundamental solution $E_{diss}(t,s,\xi)$ satisfy the following system of Volterra integral equations for $k=1,2$:
\begin{align*}
D_tE_{diss}^{(1l)}(t,s,\xi) & = \frac{D_t\ga(t)}{\ga(t)}E_{diss}^{(1l)}(t,s,\xi)+\ga(t)E_{diss}^{(2l)}(t,s,\xi), \\
D_tE_{diss}^{(2l)}(t,s,\xi) & = \frac{\la^2(t)\om^2(t)|\xi|^2}{\ga(t)}E_{diss}^{(1l)}(t,s,\xi)+i\rho(t)\om(t)E_{diss}^{(2l)}(t,s,\xi)
\end{align*}
together with their initial conditions
\[ \left( \begin{array}{cc}
E_{diss}^{(11)}(s,s,\xi) & E_{diss}^{(12)}(s,s,\xi) \\
E_{diss}^{(21)}(s,s,\xi) & E_{diss}^{(22)}(s,s,\xi)
\end{array} \right)=\left( \begin{array}{cc}
1 & 0 \\
0 & 1
\end{array} \right). \]
Then, by direct calculations we get
\begin{align*}
E_{diss}^{(11)}(t,s,\xi) & = \frac{\ga(t)}{\ga(s)}+i\ga(t)\int_{s}^{t}E_{diss}^{(21)}(\ta,s,\xi)d\ta, \\
E_{diss}^{(21)}(t,s,\xi) & = \frac{i|\xi|^2}{\de^2(t)}\int_{s}^{t}\frac{\la^2(\ta)\om^2(\ta)}{\ga(\ta)}\de^2(\ta)E_{diss}^{(11)}(\ta,s,\xi)d\ta, \\
E_{diss}^{(12)}(t,s,\xi) & = i\ga(t)\int_{s}^{t}E_{diss}^{(22)}(\ta,s,\xi)d\ta, \\
E_{diss}^{(22)}(t,s,\xi) & = \frac{\de^2(s)}{\de^2(t)}+\frac{i|\xi|^2}{\de^2(t)}\int_{s}^{t}\frac{\la^2(\ta)\om^2(\ta)}{\ga(\ta)}\de^2(\ta)E_{diss}^{(12)}(\ta,s,\xi)d\ta.
\end{align*}
The next lemma is important for deriving suitable estimates for the entries $E_{diss}^{(kl)}(t,s,\xi)$, $k,l=1,2$.
\begin{lemma} \label{Diss.Zone.Lemma}
The assumption (B3) implies $\frac{\la(t)}{\de^2(t)}\in L^1\mathbb{(R_+)}$ with
\begin{equation*}
\int_{t}^{\iy}\frac{\la(\ta)}{\de^2(\ta)}d\ta\lesssim \frac{\La(t)}{\de^2(t)}.
\end{equation*}
Moreover, $\frac{\La(t)}{\de^2(t)}$ is monotonously decreasing for large $t$.
\end{lemma}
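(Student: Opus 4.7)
The plan is to reduce everything to a single derivative identity for $\La(t)/\de^2(t)$ and then exploit the fact that assumption (B3) forces $\mu(t)\om(t)$ eventually to exceed any prescribed threshold, so this ratio becomes a positive, monotonically decreasing quantity whose derivative essentially equals $-\la(t)/\de^2(t)$ (up to a harmless factor). Concretely, using $\La'(t)=\la(t)$, $(\de^2)'(t)=\rho(t)\om(t)\de^2(t)$, and $\rho(t)=\mu(t)\la(t)/\La(t)$ from (B1), a direct computation gives
\[
\frac{d}{dt}\frac{\La(t)}{\de^2(t)}
=\frac{\la(t)-\La(t)\rho(t)\om(t)}{\de^2(t)}
=\frac{\la(t)}{\de^2(t)}\bigl(1-\mu(t)\om(t)\bigr).
\]

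Since $\om(t)\geq c_0>0$ by (A2) and $\mu(t)\to\infty$ by (B3), there exists $T_0$ with $\mu(t)\om(t)\geq 2$ for all $t\geq T_0$. On $[T_0,\infty)$ the derivative above is bounded by $-\la(t)/\de^2(t)<0$, which immediately yields the monotonicity statement. Next, I would check that $\La(t)/\de^2(t)\to0$ as $t\to\infty$: from $\rho(t)\om(t)\geq 2\la(t)/\La(t)$ on $[T_0,\infty)$ one obtains $\int_{T_0}^{t}\rho(\ta)\om(\ta)\,d\ta\geq 2\ln\bigl(\La(t)/\La(T_0)\bigr)$, hence $\de^2(t)\gtrsim \La^2(t)$, so $\La(t)/\de^2(t)\lesssim 1/\La(t)\to 0$.

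Integrating the identity from $t$ to $\infty$ for $t\geq T_0$ and using the vanishing limit yields
\[
\frac{\La(t)}{\de^2(t)}=\int_t^{\iy}\frac{\la(\ta)\bigl(\mu(\ta)\om(\ta)-1\bigr)}{\de^2(\ta)}\,d\ta
\;\geq\;\int_t^{\iy}\frac{\la(\ta)}{\de^2(\ta)}\,d\ta,
\]
where the last step uses $\mu(\ta)\om(\ta)-1\geq 1$ throughout the range of integration. This is the desired estimate for $t\geq T_0$; for $t\in[0,T_0]$ I would split $\int_t^{\iy}=\int_t^{T_0}+\int_{T_0}^{\iy}$, bounding the first piece by the compactness of $[0,T_0]$ (on which $\la/\de^2$ is continuous) and the second piece by the case just proved, while noting that $\La(t)/\de^2(t)$ is bounded below by a positive constant on $[0,T_0]$. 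I do not expect any serious obstacle: the only delicate points are verifying that the boundary term at infinity vanishes and handling the pre-stabilization interval, both of which follow directly from (B3) as outlined.
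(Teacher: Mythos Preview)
Your proposal is correct and follows essentially the same approach as the paper: both compute the derivative identity $\frac{d}{dt}\frac{\La(t)}{\de^2(t)}=\frac{\la(t)}{\de^2(t)}\bigl(1-\mu(t)\om(t)\bigr)$, use that $\mu(t)\om(t)$ eventually exceeds a constant larger than $1$ (the paper writes $\mu(t)\geq(1+\ve)/\om(t)$, you use $\mu(t)\om(t)\geq 2$), deduce monotonicity, and then integrate to obtain the tail bound. Your version is in fact slightly more explicit than the paper's about the vanishing of the boundary term at infinity and the treatment of the compact interval $[0,T_0]$.
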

\begin{proof}
From $\mu(t)\ra\iy$ as $t\ra\iy$ it follows $\mu(t)\geq\frac{1+\ve}{\om(t)}$. Then, we may conclude
\[ \de^2(t)=\exp \Big( \int_{0}^{t}\rho(\ta)\om(\ta)d\ta \Big) \gtrsim \exp \Big( (1+\ve) \int_{0}^{t}\frac{\la(\ta)}{\La(\ta)}d\ta \Big)=\La^{1+\ve}(t),\]
which implies the integrability of $\frac{\la(t)}{\de^2(t)}$. Furthermore, for large $t$ we have
\begin{align*}
\frac{1}{C_\varepsilon}&\int_{t}^{\iy}\frac{\la(\ta)}{\de^2(\ta)}d\ta \leq \int_{t}^{\iy}\frac{\ve}{\om(\ta)}\frac{\la(\ta)}{\de^2(\ta)}d\ta \leq \int_{t}^{\iy}\Big( \mu(\ta)-\frac{1}{\om(\ta)}\Big)\frac{\la(\ta)}{\de^2(\ta)}d\ta \\
& = \int_{t}^{\iy} \frac{\mu(\ta)\la(\ta)\om(\ta)-\la(\ta)}{\om(\ta)\de^2(\ta)} d\ta \lesssim \int_{t}^{\iy} \frac{\mu(\ta)\la(\ta)\om(\ta)-\la(\ta)}{\de^2(\ta)} d\ta = \frac{\La(t)}{\de^2(t)}.
\end{align*}
Moreover, we have
\[ \frac{d}{dt}\frac{\La(t)}{\de^2(t)}=\frac{\la(t)-\rho(t)\om(t)\La(t)}{\de^2(t)}=\frac{\la(t)\big( 1-\mu(t)\om(t) \big)}{\de^2(t)} \]
and $\mu(t)\geq \frac{1+\ve}{\om(t)}$ for large $t$, which implies that $\dfrac{\La(t)}{\de^2(t)}$ is decreasing for large $t$. This completes the proof. \qed
\end{proof}
In order to estimate the modulus $\big| E_{diss}^{(kl)}(t,s,\xi) \big|$, $k,l=1,2,$ of the entries $E_{diss}^{(kl)}(t,s,\xi)$ we will use the assumption (B6).
\begin{corollary} \label{Cor.Est.Diss.Zone}
Assume the conditions (A1) for $\la(t)$, (A2) for $\om(t)$, (B3) and (B6) for $\rho(t)$. Then, we have the following estimate in the dissipative zone:
\begin{equation*}
\big( |E_{diss}(t,s,\xi)| \big) \lesssim \frac{\la(t)}{F(\La(t))} \left( \begin{array}{cc}
\frac{F(\La(s))}{\la(s)} & \frac{\La(s)}{\la(s)} \\ [5pt]
\frac{F(\La(s))}{\la(s)} & \frac{\La(s)}{\la(s)}
\end{array} \right),
\end{equation*}
with $(s,\xi),(t,\xi)\in\Zdiss(d_0)$ and $0\leq s\leq t$.
\end{corollary}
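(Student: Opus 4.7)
The plan is to exploit the system of Volterra integral equations for the entries $E_{diss}^{(kl)}$ written out just above, with the dissipative-zone inequality $|\xi| \leq d_0/F(\La(t))$ playing the role of the small parameter that makes a fixed-point iteration converge. The two columns of $E_{diss}$ will be handled separately, since their inhomogeneous terms $\ga(t)/\ga(s)$ and $\de^2(s)/\de^2(t)$ carry the two different decay rates appearing in the target matrix.

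For the first column I would start with the ansatz $|E_{diss}^{(11)}(t,s,\xi)| \leq C\,\ga(t)/\ga(s)$, insert it into the integral equation for $E_{diss}^{(21)}$, and then back into that for $E_{diss}^{(11)}$. After interchanging the order of integration and using $\la^2(\ta)\om^2(\ta)/\ga(\ta) = \la(\ta)\om^2(\ta)F(\La(\ta))$, everything reduces to controlling
\begin{equation*}
|\xi|^2 \int_s^t \frac{1}{\de^2(\ta)} \int_s^\ta \la^2(r)\om^2(r)\de^2(r)\,dr\, d\ta.
\end{equation*}
Integration by parts using $(\de^2)'(\ta) = \rho(\ta)\om(\ta)\de^2(\ta)$, the boundedness of $\om$ from (A2), the key assumption (B6) to control $\int_0^{\cdot}\la^2/\rho\,d\ta$ by $F^2(\La(\cdot))$, and the dissipative-zone smallness $|\xi|^2 F^2(\La(t)) \leq d_0^2$ should bound this expression by a constant. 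A Gronwall-type step then closes the iteration and gives $|E_{diss}^{(11)}| \lesssim \ga(t)/\ga(s)$, after which the bound for $|E_{diss}^{(21)}|$ follows by direct substitution into its defining integral equation.

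For the second column the leading contribution is $\de^2(s)/\de^2(t)$ in $E_{diss}^{(22)}$, which must be dominated by $\ga(t)\La(s)/\la(s)$. The crucial input is Lemma \ref{Diss.Zone.Lemma}: it gives both the integral estimate $\int_t^\infty \la(\ta)/\de^2(\ta)\,d\ta \lesssim \La(t)/\de^2(t)$ and the eventual monotonic decrease of $\La(t)/\de^2(t)$, which together are exactly what is needed to trade the fast exponential decay $\de^2(s)/\de^2(t)$ for the algebraic factor $\La(s)/\la(s)$. With the corresponding ansatz $|E_{diss}^{(22)}(t,s,\xi)| \leq C\,\ga(t)\La(s)/\la(s)$, the same iteration scheme driven by (B6) and the dissipative-zone smallness closes, and the bound for $E_{diss}^{(12)}$ then follows by substitution into its Volterra equation.

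The main obstacle I expect is closing the iterated Volterra loop cleanly --- specifically, showing that the double integral against the competing factors $\de^2$ and $\de^{-2}$ is absorbed by the small parameter $|\xi|^2 \leq d_0^2/F^2(\La(t))$ via (B6), uniformly in $s$ and $t$. Once this fixed-point step is under control, the remaining manipulations are algebraic, and the overall structure of the argument parallels the corresponding dissipative-zone estimate in \cite{BuiReiBook}, just with the oscillating factor $\om(t)$ inserted wherever $1$ previously stood (it enters essentially only through its boundedness).
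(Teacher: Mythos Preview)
Your approach matches the paper's: substitute one Volterra equation into the other within each column, integrate by parts using $(\de^2)'=\rho\om\de^2$, apply Gronwall together with (B6) and the zone bound $|\xi|F(\La(t))\le d_0$, and invoke Lemma~\ref{Diss.Zone.Lemma} for the second column.

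Two small corrections. First, for the second column the paper closes the loop on $E_{diss}^{(12)}$ first (setting $y=E_{diss}^{(12)}/\ga(t)$), so that the inhomogeneous term becomes $\de^2(s)\int_s^t\de^{-2}(\ta)\,d\ta$, which Lemma~\ref{Diss.Zone.Lemma} together with $1/\la(\ta)\le 1/\la(s)$ bounds directly by $\La(s)/\la(s)$; the bound for $E_{diss}^{(22)}$ then follows by substitution. Closing on $E_{diss}^{(22)}$ first as you propose inserts a factor $\int_s^r\ga(\th)\,d\th$ into the iterated kernel, and this integral is in general unbounded (e.g.\ for $F(\La)=\La^\alpha$ with $\alpha<1$), so the fixed-point step does not close in that order. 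Second, the integration by parts against $\de^2$ leaves a remainder $\int(\rho\om)'/(\rho\om)^2\cdot\de^2\,d\ta$; the paper disposes of it via (B5), which gives $(\rho\om)'/(\rho\om)^2=\smallO{1}$, so you should list (B5) among the ingredients as well.
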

\begin{proof}
First let us consider the first column. Plugging the representation for $E_{diss}^{(21)}(t,s,\xi)$ into the integral equation for $E_{diss}^{(11)}(t,s,\xi)$ gives
\[ \frac{\ga(s)}{\ga(t)}E_{diss}^{(11)}(t,s,\xi)=1-|\xi|^2\int_{s}^{t}\int_{s}^{\ta}\la^2(\th)\om^2(\th)\frac{\de^2(\th)}{\de^2(\ta)}\frac{\ga(s)}{\ga(\th)}
E_{diss}^{(11)}(\th,s,\xi)d\th d\ta. \]
By setting $y(t,s,\xi):=\dfrac{\ga(s)}{\ga(t)}E_{diss}^{(11)}(t,s,\xi)$ and applying partial integration we obtain
\begin{align*}
y(t,s,\xi) & = 1+|\xi|^2\int_{s}^{t}\la^2(\th)\om^2(\th)\Big( \frac{1}{\rho(\ta)\om(\ta)}\frac{\de^2(\th)}{\de^2(\ta)}\Big|_{\th}^{t} \\
& \qquad \qquad \qquad \qquad \qquad \qquad + \int_{\th}^{t}\underbrace{\frac{\big( \rho(\ta)\om(\ta) \big)'}{\big( \rho(\ta)\om(\ta) \big)^2}}_{=\smallO{1} \, \mbox{for}\, \tau \to \infty} \frac{\de^2(\th)}{\de^2(\ta)} d\ta \Big) y(\th,s,\xi)d\th \\
& \approx 1+|\xi|^2\int_{s}^{t}\la^2(\th)\om^2(\th)\Big( \frac{1}{\rho(t)\om(t)}\frac{\de^2(\th)}{\de^2(t)}-\frac{1}{\rho(\th)\om(\th)} \Big)y(\th,s,\xi)d\th.
\end{align*}
Then, we have
\[ |y(t,s,\xi)|\lesssim 1+|\xi|^2\int_{s}^{t}\Big( \frac{\la^2(\th)\om^2(\th)}{\rho(t)\om(t)}\frac{\de^2(\th)}{\de^2(t)}+\frac{\la^2(\th)\om(\th)}{\rho(\th)} \Big)|y(\th,s,\xi)|d\th. \]
Applying Gronwall's inequality and partial integration, respectively, we get
\begin{align*}
|y(t,s,\xi)| & \lesssim \exp \Big( \frac{\la^2(t)|\xi|^2}{\rho(t)}\frac{1}{\de^2(t)}\int_{s}^{t}\de^2(\th)d\th+|\xi|^2\int_{s}^{t}\frac{\la^2(\th)}{\rho(\th)}d\th \Big) \\
& = \exp \bigg( \frac{\la^2(t)|\xi|^2}{\rho(t)}\frac{1}{\de^2(t)} \Big( \frac{\de^2(\th)}{\rho(\th)\om(\th)}\Big|_{s}^{t} \\
& \qquad \qquad +\int_{s}^{t}\frac{\big( \rho(\th)\om(\th) \big)'}{\big( \rho(\th)\om(\th) \big)^2}\de^2(\th) d\th \Big) + |\xi|^2\int_{s}^{t}\frac{\la^2(\th)}{\rho(\th)}d\th \bigg) \\
& \lesssim \exp \Big( C\frac{\la^2(t)|\xi|^2}{\rho^2(t)} + C\frac{d_0^2}{F^2\big( \La(t) \big)}\int_{s}^{t}\frac{\la^2(\th)}{\rho(\th)}d\th \Big) \lesssim 1.
\end{align*}
Here we have used $\frac{(\rho(\th)\om(\th))'}{(\rho(\th)\om(\th))^2}=\smallO{1}$ from the assumption (B5), $\la(t)|\xi|\lesssim \rho(t)$ from the definition of the elliptic region, $|\xi|\leq\frac{d_0}{F(\La(t))}$ from the definition of the dissipative zone and the assumption (B6), respectively. Hence, we may conclude
\[ \big| E_{diss}^{(11)}(t,s,\xi) \big| \lesssim \frac{\ga(t)}{\ga(s)}=\frac{\la(t)}{F( \La(t))}\frac{F(\La(s))}{\la(s)}. \]
Now we consider $E_{diss}^{(21)}(t,s,\xi)$. By using the estimate for $\big| E_{diss}^{(11)}(t,s,\xi) \big|$ we obtain
\begin{align*}
\big| E_{diss}^{(21)}(t,s,\xi) \big| & \lesssim |\xi|^2\int_{s}^{t}\frac{\la^2(\ta)}{\ga(\ta)}\frac{\de^2(\ta)}{\de^2(t)}\big| E_{diss}^{(11)}(\ta,s,\xi) \big|d\ta\lesssim \frac{1}{\ga(s)}\frac{\la^2(t)|\xi|^2}{\de^2(t)}\int_{s}^{t}\de^2(\ta)d\ta \\
& = \frac{1}{\ga(s)}\frac{\la^2(t)|\xi|^2}{\de^2(t)}\bigg( \frac{1}{\rho(\ta)\om(\ta)}\de^2(\ta)\Big|_{s}^{t}+\int_{s}^{t}\frac{\big( \rho(\ta)\om(\ta) \big)'}{\big( \rho(\ta)\om(\ta) \big)^2}\de^2(\ta)d\ta \bigg) \\
& \lesssim \frac{1}{\ga(s)}\frac{\la^2(t)|\xi|^2}{\rho(t)\om(t)} \lesssim \frac{1}{\ga(s)}\la(t)|\xi| \lesssim \frac{F\big( \La(s) \big)}{\la(s)}\frac{\la(t)}{F\big( \La(t) \big)}.
\end{align*}
Here we used $\frac{(\rho(\th)\om(\th))'}{(\rho(\th)\om(\th))^2}=\smallO{1}$ from the assumption (B5), $\la(t)|\xi|\lesssim \rho(t)$ from the definition of the elliptic region and $|\xi|\leq\frac{d_0}{F(\La(t))}$ from the definition of the dissipative zone, respectively. \\
Next we consider the entries of the second column. Plugging the representation for $E_{diss}^{(22)}(t,s,\xi)$ into the integral equation for $E_{diss}^{(12)}(t,s,\xi)$ gives
\begin{align*}
& E_{diss}^{(12)}(t,s,\xi) \\
& = i\ga(t)\de^2(s)\int_{s}^{t}\frac{d\ta}{\de^2(\ta)}-
|\xi|^2\ga(t)\int_{s}^{t}\int_{s}^{\ta}\la^2(\th)\om^2(\th)\frac{\de^2(\th)}{\de^2(\ta)}\frac{1}{\ga(\th)}E_{diss}^{(12)}(\th,s,\xi)d\th d\ta.
\end{align*}
By setting $y(t,s,\xi):=\dfrac{1}{\ga(t)}E_{diss}^{(12)}(t,s,\xi)$ and proceeding in the same manner as with $E_{diss}^{(11)}(t,s,\xi)$, after integration by parts we obtain
\begin{align*}
|y(t,s,\xi)| & \lesssim \de^2(s)\int_{s}^{t}\frac{\la(\ta)}{\de^2(\ta)}\frac{1}{\la(\ta)}d\ta \\
& \qquad + |\xi|^2\int_{s}^{t}\Big( \frac{\la^2(\th)\om^2(\th)}{\rho(t)\om(t)}\frac{\de^2(\th)}{\de^2(t)}+\frac{\la^2(\th)\om^2(\th)}{\rho(\th)\om(\th)} \Big)|y(\th,s,\xi)|d\th.
\end{align*}
Now we can use again Gronwall's inequality, since the first integral can be estimated by $\dfrac{\La(s)}{\de^2(s)}\dfrac{1}{\la(s)}$ due to Lemma \ref{Diss.Zone.Lemma}. Then, we get
\begin{align*}
|y(t,s,\xi)| \lesssim \frac{\La(s)}{\la(s)}.
\end{align*}
Thus, we obtain
\[ \big| E_{diss}^{(12)}(t,s,\xi) \big|\lesssim \ga(t)\frac{\La(s)}{\la(s)}=\frac{\la(t)}{F\big( \La(t) \big)}\frac{\La(s)}{\la(s)}. \]
Finally, let us consider $E_{diss}^{(22)}(t,s,\xi)$ by using the estimate for $\big| E_{diss}^{(12)}(t,s,\xi) \big|$. In the same way as in the estimate for $\big| E_{diss}^{(21)}(t,s,\xi) \big|$ we obtain
\begin{align*}
\big| E_{diss}^{(22)}&(t,s,\xi) \big| \lesssim \frac{\de^2(s)}{\de^2(t)}+|\xi|^2\int_{s}^{t}\la^2(\ta)\frac{\de^2(\ta)}{\de^2(t)}\frac{1}{\ga(\ta)}\big| E_{diss}^{(12)}(\ta,s,\xi) \big|d\ta \\
& \lesssim \frac{\de^2(s)}{\de^2(t)}+\frac{\la^2(t)|\xi|^2}{\de^2(t)}\frac{\La(s)}{\la(s)}\int_{s}^{t}\de^2(\ta)d\ta \lesssim \frac{\de^2(s)}{\de^2(t)}+\frac{\La(s)}{\la(s)}\la(t)|\xi|.
\end{align*}
We rewrite the last inequality as
\begin{align*}
\frac{\La(t)}{\la(t)}\big| E_{diss}^{(22)}(t,s,\xi) \big| \lesssim \frac{\La(s)}{\la(s)}+\frac{\La(s)}{\la(s)}\La(t)|\xi|\lesssim \frac{\La(s)}{\la(s)}+\frac{\La(s)}{\la(s)}\frac{\La(t)}{F\big( \La(t) \big)},
\end{align*}
where we used that $\dfrac{\La(t)}{\de^2(t)}$ is decreasing for large $t$ due to Lemma \ref{Diss.Zone.Lemma}. Thus, we get
\begin{align*}
\big| E_{diss}^{(22)}(t,s,\xi) \big| \lesssim \frac{\La(s)}{\la(s)}\frac{\la(t)}{\La(t)}+\frac{\La(s)}{\la(s)}\frac{\La(t)}{F\big( \La(t) \big)}\frac{\la(t)}{\La(t)} \lesssim \frac{\la(t)}{F\big( \La(t) \big)}\frac{\La(s)}{\la(s)},
\end{align*}
where we used $\La(t)\geq F\big( \La(t) \big)$. This completes the proof. \qed
\end{proof}
\subsection{Considerations in the reduced zone}
In the reduced zone we introduce the micro-energy $V=V(t,\xi)$ by
\[ V=\Big( \ve\frac{\rho(t)\om(t)}{2}v, D_t v \Big)^T. \]
Then, by \eqref{after.dissipative} the function $V$ satisfies the following system:
\begin{equation} \label{Red.zone.system.A.V}
D_tV=\underbrace{\left( \begin{array}{cc}
\frac{D_t(\rho(t)\om(t))}{\rho(t)\om(t)} & \ve\frac{\rho(t)\om(t)}{2} \\ [5pt]
\frac{\la^2(t)\om^2(t)|\xi|^2-\frac{1}{4}(\rho(t)\om(t))^2-\frac{1}{2}(\rho(t)\om(t))'}{\ve\frac{\rho(t)\om(t)}{2}} & 0
\end{array} \right)}_{A_V(t,\xi)}V.
\end{equation}
We want to estimate the fundamental solution $E_{red}^V=E_{red}^V(t,s,\xi)$ to \eqref{Red.zone.system.A.V}, that is, the solution to
\[ D_tE_{red}^V(t,s,\xi)=A_V(t,\xi)E_{red}^V(t,s,\xi), \quad E_{red}^V(s,s,\xi)=I. \]
Due to $\big| \big( \rho(t)\om(t) \big)' \big|=\smallO{(\rho(t)\om(t))^2}$ for sufficiently large $t\geq t_0$, it holds
\[ \frac{\big| D_t\big( \rho(t)\om(t) \big) \big|}{\rho(t)\om(t)}\lesssim \ve\rho(t)\om(t). \]
Moreover, we have the estimate
\[ \jpnxi\lesssim \ve\frac{\rho(t)\om(t)}{2}. \]
Hence, we obtain the following estimate:
\begin{align*}
\frac{\big| \la^2(t)\om^2(t)|\xi|^2-\frac{1}{4}(\rho(t)\om(t))^2-\frac{1}{2}(\rho(t)\om(t))' \big|}{\ve\frac{\rho(t)\om(t)}{2}} \lesssim \ve\rho(t)\om(t),
\end{align*}
where we used $\big|\big(\rho(t)\om(t)\big)'\big|=\smallO{\left(\rho(t)\om(t)\right)^2}$. Finally, the norm of the coefficient matrix of \eqref{Red.zone.system.A.V} can be estimated by $\ve\rho(t)\om(t)$ for sufficiently large $t$.
\begin{remark}
From the backward transformation we may conclude that the fundamental solution $E_{red}=E_{red}(t,s,\xi)$ can be estimated as follows:
\begin{align*}
\big( |E_{red}(t,s,\xi)| \big) & \lesssim \exp \Big( -\frac{1}{2}\int_{s}^{t}\rho(\ta)\om(\ta)d\ta \Big)\big( |E_{red}^V(t,s,\xi)| \big).
\end{align*}
\end{remark}
\begin{corollary} \label{Cor.Red.zone}
Under the assumptions (B1), (B2) and (B5) the fundamental solution $E_{red}=E_{red}(t,s,\xi)$ satisfies the following estimate in the reduced zone:
\[ \big( |E_{red}(t,s,\xi)| \big) \lesssim \exp \bigg( \Big( \ve-\frac{1}{2} \Big) \int_{s}^{t}\rho(\ta)\om(\ta)d\ta \bigg)\left( \begin{array}{cc}
    1 & 1 \\
    1 & 1
\end{array} \right) \]
for $t\geq s \geq t_0$ with a sufficiently large $t_0=t_0(\ve)$ and $(t,\xi), (s,\xi)\in\Zred(\ve)$.
\end{corollary}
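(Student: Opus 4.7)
The plan is to bootstrap Corollary \ref{Cor.Red.zone} directly from the matrix-norm bound for $A_V(t,\xi)$ that has already been obtained in the preceding paragraphs, combined with the backward dissipative transformation flagged in the remark just before the statement.

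First I would fix $t \geq s \geq t_0$ with $t_0=t_0(\ve)$ chosen so large that the asymptotic relations $|(\rho(t)\om(t))'|=\smallO{(\rho(t)\om(t))^2}$ (from (B5)) and $|D_t(\rho(t)\om(t))|/(\rho(t)\om(t))\lesssim\ve\rho(t)\om(t)$ (which follows from (B1), (B2)) are valid in a quantitative form with the prescribed $\ve$. In $\Zred(\ve)$ the defining inequality $\jpnxi\leq\ve\rho(t)\om(t)/2$, together with the negligibility of $(\rho(t)\om(t))'$, then yields that every entry of $A_V(t,\xi)$ in \eqref{Red.zone.system.A.V} is dominated by $\ve\rho(t)\om(t)$; in particular $\|A_V(t,\xi)\|\lesssim\ve\rho(t)\om(t)$ uniformly in $(t,\xi)\in\Zred(\ve)$ with $t\geq t_0$.

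Next I would write the fundamental solution to \eqref{Red.zone.system.A.V} as the Peano--Baker series
\[ E_{red}^V(t,s,\xi)=I+\sum_{k=1}^{\iy} i^k\int_s^t A_V(t_1,\xi)\int_s^{t_1} A_V(t_2,\xi)\cdots\int_s^{t_{k-1}}A_V(t_k,\xi)\,dt_k\cdots dt_1, \]
so that termwise absolute value estimation gives
\[ \big(|E_{red}^V(t,s,\xi)|\big)\lesssim \exp\Big(C\ve\int_s^t\rho(\ta)\om(\ta)d\ta\Big)\left(\begin{array}{cc}1 & 1\\ 1 & 1\end{array}\right). \]
Absorbing the harmless constant $C$ into $\ve$ (by replacing $\ve$ by $\ve/C$ in the definition of the reduced zone), one obtains the bound with exponent $\ve\int_s^t\rho(\ta)\om(\ta)d\ta$ for $E_{red}^V$. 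Equivalently, one may apply Gronwall's inequality directly to the integral form $V(t,\xi)=V(s,\xi)+i\int_s^t A_V(\ta,\xi)V(\ta,\xi)d\ta$, which gives the same bound.

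Finally I would invoke the backward dissipative transformation $\fu(t,\xi)=\de^{-1}(t)v(t,\xi)$ with $\de(t)=\exp\big(\tfrac12\int_0^t\rho(\ta)\om(\ta)d\ta\big)$, as recalled in the remark immediately preceding the corollary. This produces the multiplicative factor $\de(s)/\de(t)=\exp\big(-\tfrac12\int_s^t\rho(\ta)\om(\ta)d\ta\big)$ in front of $|E_{red}^V(t,s,\xi)|$, and combining with the previous step yields
\[ \big(|E_{red}(t,s,\xi)|\big)\lesssim \exp\Big(\Big(\ve-\tfrac12\Big)\int_s^t\rho(\ta)\om(\ta)d\ta\Big)\left(\begin{array}{cc}1 & 1\\ 1 & 1\end{array}\right), \]
which is the claim. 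The only delicate point is the bookkeeping of the constants in the Gronwall/Peano--Baker step, so that after redefining $\ve$ the exponent really comes out with coefficient $\ve-\tfrac12$ and not merely $C\ve-\tfrac12$; this is why the threshold $t_0=t_0(\ve)$ must be chosen in dependence of $\ve$, to ensure that $|(\rho\om)'|$ and $|D_t(\rho\om)|/(\rho\om)$ are truly smaller than any prescribed multiple of $\rho(t)\om(t)$.
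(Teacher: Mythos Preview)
Your proposal is correct and follows exactly the route the paper intends: the paper establishes the entrywise bound $\|A_V(t,\xi)\|\lesssim\ve\rho(t)\om(t)$ for $t\geq t_0(\ve)$ and records the backward transformation in the remark, then states the corollary without further proof---you have simply supplied the standard Gronwall/Peano--Baker step that the paper leaves implicit. Your remark about absorbing the constant $C$ into $\ve$ (equivalently, shrinking the zone parameter) is the right way to read the paper's use of $\lesssim$ in the matrix-norm estimate, and your observation that $t_0$ must depend on $\ve$ to make the $o(1)$ relation from (B5) quantitative is exactly the reason the corollary is stated for $t\geq s\geq t_0(\ve)$.
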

\section{Gluing procedure}
In the previous sections we have derived estimates for fundamental fundamental solutions in different zones. Now we have to glue these estimates from Corollaries \ref{Cor.Est.Hyp.Zone}, \ref{Cor.est.osc.zone}, \ref{Cor.Est.Ell.Zone}, \ref{Cor.Est.Diss.Zone} and \ref{Cor.Red.zone}. \\
Taking into account of the estimates in $\Zhyp(N)$, $\Zosc(N,\ve)$ and $\Zred(\ve)$ we can uniformly estimate $\big( |E_{hyp}(t,s,\xi)| \big)$ and $\big( |E_{osc}(t,s,\xi)| \big)$ by the upper bound from the estimate for $\big( |E_{red}(t,s,\xi)| \big)$. Therefore, we can glue $\Zred(\ve)$ to the hyperbolic region and we define new regions by
\begin{align*}
\Pi_{hyp}(N,\ve) & = \Zred(\ve)\cup \Zosc(N,\ve) \cup \Zhyp(N), \\
\Pi_{ell}(d_0,\ve) & = \Zell(d_0,\ve)\cup Z_{diss}(d_0).
\end{align*}
We denote by $t_{diss}(|\xi|)=:t_{diss}$ the separating line between $\Zell(d_0,\ve)$ and $\Zdiss(d_0)$ and by $t(|\xi|)=:t_{|\xi|}$ the separating curve between $\Pi_{ell}(d_0,\ve)$ and $\Pi_{hyp}(N,\ve)$. Due to the definitions of the zones these separating lines really exist and can be described by functions due to the definitions of the zones and the monotonicity of these functions. Indeed, denoting $\eta(t):=\frac{\mu(t)}{2\La(t)}$, this curve is given by
\[ \eta^2(t_{|\xi|})-|\xi|^2 = \ve^2\eta^2(t_{|\xi|}), \quad \text{i.e.,} \quad t_{|\xi|}=\eta^{-1}\Big( \frac{|\xi|}{\sqrt{1-\ve^2}} \Big). \]
\begin{definition}
We denote by $B_\la=B_\la(s,t)$, $0\leq s \leq t$, the primitive of $\dfrac{\la^2(t)}{\rho(t)}$ which vanishes at $t=s$. So, it is defined by
\[ B_\la(s,t):=\int_{s}^{t}\frac{\la^2(\ta)}{\rho(\ta)}d\ta=B_\la(0,t)-B_\la(0,s). \]
\end{definition}
In order to obtain energy estimates, first we establish some auxiliary estimates.
\begin{lemma} \label{Gluing.Auxiliary.Lemma}
Under the assumptions (A1), (B1), (B2) and (B6) the following estimates hold:
\begin{itemize}
\item [1.] Supposing $|\xi|F( \La(t_{diss}))=d_0$ it holds
\[ \exp \Big( -C|\xi|^2\int_{0}^{t_{diss}}\frac{\la^2(\ta)}{\rho(\ta)}d\ta \Big)\approx 1. \]
\item [2.] Supposing $|\xi|=\eta(t_{|\xi|})\sqrt{1-\ve^2}$ it holds
\[ \big|d_{|\xi|}t_{|\xi|}\big| \gtrsim \frac{\mu(t_{|\xi|})}{|\xi|\rho(t_{|\xi|})}. \]
\end{itemize}
\end{lemma}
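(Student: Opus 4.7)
The plan is to treat each part as a direct bookkeeping computation combining one or two of the structural assumptions with the definitions of the separating curves. Neither step requires a new idea, but each needs the right identity to be invoked.

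For part 1, I would start from assumption (B6), which says $\int_{0}^{t_{diss}} \frac{\la^2(\ta)}{\rho(\ta)}d\ta \leq C_5 F^2(\La(t_{diss}))$. Multiplying by $C|\xi|^2$, I obtain
\[ C|\xi|^2 \int_{0}^{t_{diss}} \frac{\la^2(\ta)}{\rho(\ta)} d\ta \leq C C_5 \bigl(|\xi| F(\La(t_{diss}))\bigr)^2. \]
The hypothesis $|\xi| F(\La(t_{diss})) \sim 1$ makes the right-hand side $\lesssim 1$, so the exponent is bounded above in absolute value. Since the exponent is non-positive, the exponential lies in a bounded interval of the form $[c,1]$ with $c>0$, which is exactly the claim $\exp(\cdots) \sim 1$.

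For part 2, I would apply the inverse function theorem to $\eta(t):=\frac{\mu(t)}{2\La(t)}$. From $|\xi|=\eta(t_{|\xi|})\sqrt{1-\ve^2}$ one gets
\[ d_{|\xi|} t_{|\xi|} = \frac{1}{\sqrt{1-\ve^2}\, \eta'(t_{|\xi|})}, \]
so obtaining a lower bound on $|d_{|\xi|} t_{|\xi|}|$ reduces to an upper bound on $|\eta'(t_{|\xi|})|$. A direct differentiation gives $\eta'(t) = \frac{\mu'(t)}{2\La(t)} - \frac{\mu(t)\la(t)}{2\La^2(t)}$, and assumption (B2) (with $k=1$) yields $|\mu'(t)| \lesssim \mu(t)\frac{\la(t)}{\La(t)}$. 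Combining these bounds gives $|\eta'(t)| \lesssim \frac{\mu(t)\la(t)}{\La^2(t)}$, hence
\[ |d_{|\xi|} t_{|\xi|}| \gtrsim \frac{\La^2(t_{|\xi|})}{\mu(t_{|\xi|})\la(t_{|\xi|})}. \]

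The final step is purely algebraic: one rewrites the right-hand side in the form of the claim using (B1) $\rho(t)=\mu(t)\la(t)/\La(t)$ together with $|\xi| \sim \mu(t_{|\xi|})/\La(t_{|\xi|})$ coming from the defining relation $|\xi|=\eta(t_{|\xi|})\sqrt{1-\ve^2}$. Substituting these two identities into $\frac{\mu(t_{|\xi|})}{|\xi|\rho(t_{|\xi|})}$ produces exactly $\frac{\La^2(t_{|\xi|})}{\mu(t_{|\xi|})\la(t_{|\xi|})}$, so the two expressions are equivalent. I expect no real obstacle here; the only thing to watch is the sign of $\eta'$ (which may be positive or negative depending on whether $\mu/\La$ is increasing or decreasing by (B3)), but taking absolute values throughout as above handles both cases uniformly.
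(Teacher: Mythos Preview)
Your proof is correct. The paper actually states this lemma without proof, so there is nothing to compare your argument against; your derivation is the natural one and uses exactly the assumptions listed in the hypothesis (B6) for part 1, and (B1), (B2) together with the defining relation $|\xi|=\eta(t_{|\xi|})\sqrt{1-\ve^2}$ for part 2.
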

\textbf{Case 1: the function $\eta=\eta(t)$ is monotonously decreasing} \smallskip

Now we distinguish between two cases related to the setting of the zones in the extended phase space. \smallskip

\textbf{Large frequencies} \smallskip

In this case the large frequencies are in $\Zhyp(N)$ only. Then, it follows
\[ \big( |E(t,0,\xi)| \big)\lesssim \Big( \frac{1}{\de(t)} \Big)^{1-2\ve}
\left( \begin{array}{cc}
1 & 1 \\
1 & 1 \end{array} \right), \]
where $\de(t)=\exp\big( \frac{1}{2}\int_{0}^{t}\rho(\ta)\om(\ta)d\ta \big)$. \smallskip

\textbf{Small frequencies} \smallskip

In this case, if $\eta=\eta(t)$ is decreasing, in general all zones appear for small frequencies (see Fig. 1, Case a). Then, we have the following three cases: \smallskip

\textit{Case 1.1}:\, $t\leq t_{diss}$ \smallskip

In this case $(t,\xi)$ belongs to $\Zdiss(d_0)$. Then, we have the following estimate from Corollary \ref{Cor.Est.Diss.Zone}:
\[ \big( |E(t,0,\xi)| \big)\lesssim \frac{\la(t)}{F\big( \La(t) \big)}\left( \begin{array}{cc}
1 & 1 \\
1 & 1 \end{array} \right). \]

\textit{Case 1.2}:\, $t_{diss}\leq t\leq t_{|\xi|} $ \smallskip

Now we will glue the estimates in $\Zell(d_0,\ve)$ from Corollary \ref{Cor.Est.Ell.Zone} with those in $\Zdiss(d_0)$ from Corollary \ref{Cor.Est.Diss.Zone}.
\begin{lemma} \label{Par.Dep.Lemma.Case1.2}
The following estimates hold for all $t\in[t_{diss},t_{|\xi|}]$ and $|\xi|\geq \frac{d_0}{F(\La(t))}$:
\begin{align*}
\big( |E(t,0,\xi)| \big) \lesssim \exp \big( -C|\xi|^2B_\la(0,t) \big)
\left( \begin{array}{cc}
\la(t)|\xi| & \la(t)|\xi| \\
\frac{\la^2(t)|\xi|^2}{\rho(t)} & \frac{\la^2(t)|\xi|^2}{\rho(t)}
\end{array} \right).
\end{align*}
\end{lemma}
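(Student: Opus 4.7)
The proof goes by the semigroup property applied at the gluing time $s=t_{diss}$: write
\[ E(t,0,\xi) = E_{ell}(t, t_{diss}, \xi)\, E_{diss}(t_{diss}, 0, \xi), \]
and bound each factor using Corollaries~\ref{Cor.Est.Ell.Zone} and \ref{Cor.Est.Diss.Zone}, multiplying the component-wise upper bounds.

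\textbf{Dissipative factor.} At $s=0$ the quantities $\La(0)=1$, $F(\La(0))=F(1)$ and $\la(0)$ are absolute constants, so Corollary~\ref{Cor.Est.Diss.Zone} reduces to
\[ \big(|E_{diss}(t_{diss}, 0, \xi)|\big) \lesssim \frac{\la(t_{diss})}{F(\La(t_{diss}))} \left(\begin{array}{cc} 1 & 1 \\ 1 & 1 \end{array}\right). \]
The defining relation $|\xi| F(\La(t_{diss})) \sim 1$ of $t_{diss}$ identifies the prefactor with $\la(t_{diss})|\xi|$.

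\textbf{Matrix product.} The key observation is that the leading matrix
\[ M_1(s) = \left(\begin{array}{cc} \la(t)/\la(s) & \la(t)|\xi|/\rho(s) \\ \la^2(t)|\xi|/(\la(s)\rho(t)) & \la^2(t)|\xi|^2/(\rho(s)\rho(t)) \end{array}\right) \]
from Corollary~\ref{Cor.Est.Ell.Zone} has rank one, namely $M_1(s) = \bigl(\la(t),\, \la^2(t)|\xi|/\rho(t)\bigr)^{T}\cdot \bigl(1/\la(s),\, |\xi|/\rho(s)\bigr)$. Multiplying on the right at $s=t_{diss}$ by $\la(t_{diss})|\xi|\left(\begin{array}{cc} 1 & 1 \\ 1 & 1 \end{array}\right)$ therefore produces
\[ \la(t_{diss})|\xi|\Big(\frac{1}{\la(t_{diss})}+\frac{|\xi|}{\rho(t_{diss})}\Big)\left(\begin{array}{c} \la(t) \\ \la^2(t)|\xi|/\rho(t) \end{array}\right)\left(\begin{array}{cc}1 & 1\end{array}\right). \]
The elliptic containment $\la(t_{diss})|\xi|\leq\rho(t_{diss})/2$ bounds the parenthesis by $2/\la(t_{diss})$, so the expression is $\lesssim\left(\begin{array}{cc} \la(t)|\xi| & \la(t)|\xi| \\ \la^2(t)|\xi|^2/\rho(t) & \la^2(t)|\xi|^2/\rho(t) \end{array}\right)$. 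Using assumption (A2) I replace the integrand $\la^2\om/\rho$ by $\la^2/\rho$ in the exponential, and Lemma~\ref{Gluing.Auxiliary.Lemma}.1 then lets me enlarge the integration interval from $[t_{diss},t]$ to $[0,t]$ at the cost of an absolute constant.

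\textbf{Residual contribution.} The main obstacle is to absorb the tail produced by the second, non-exponential summand of Corollary~\ref{Cor.Est.Ell.Zone}, which after multiplication by $E_{diss}(t_{diss},0,\xi)$ reads
\[ \frac{\de^2(t_{diss})}{\de^2(t)}\la(t_{diss})|\xi|\left(\begin{array}{cc} 0 & 0 \\ 1 & 1 \end{array}\right). \]
For this I would square Lemma~\ref{Lem.Ell.Aux.Est.}.2 to obtain
\[ \frac{\de^2(t_{diss})}{\de^2(t)}\leq\exp\Big(-2\int_{t_{diss}}^t \jpn_{\la(\ta),\om(\ta)}\,d\ta\Big)\exp\Big(-c|\xi|^2 B_\la(t_{diss},t)\Big), \]
exploit the elliptic comparison \eqref{20} (giving $\jpn_{\la,\om}\sim\rho/2$ in $\Zell$), and use the growth of $\mu$ from (B3) together with (B5) to show that the surviving factor $\exp\!\big(-\int_{t_{diss}}^t\rho(\ta)\om(\ta)\,d\ta\big)$ supplies the missing ratio $\la(t)|\xi|/\rho(t)$ needed to dominate $\la(t_{diss})|\xi|$ by $\la^2(t)|\xi|^2/\rho(t)$. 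This balancing of the exponential decay against the prefactor $\la(t_{diss})\rho(t)/(\la^2(t)|\xi|)$ is the only delicate estimate; everything else is book-keeping of the matrix product.
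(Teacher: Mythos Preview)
Your main argument—factoring $E(t,0,\xi)=E_{ell}(t,t_{diss},\xi)E_{diss}(t_{diss},0,\xi)$, exploiting the rank-one structure of the leading elliptic matrix, and invoking $|\xi|F(\La(t_{diss}))\sim 1$, $\la(t_{diss})|\xi|\lesssim\rho(t_{diss})$ together with Lemma~\ref{Gluing.Auxiliary.Lemma}.1 to extend $B_\la(t_{diss},t)$ to $B_\la(0,t)$—is exactly the paper's proof. On that part there is nothing to add.

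Where you go further is in treating the second summand $\dfrac{\de^2(t_{diss})}{\de^2(t)}\left(\begin{array}{cc}0&0\\0&1\end{array}\right)$ from Corollary~\ref{Cor.Est.Ell.Zone}; the paper's proof simply omits it. Your sketch for this residual, however, does not close. Evaluate your proposed inequality at $t=t_{diss}^{+}$: the factor $\de^2(t_{diss})/\de^2(t)$ equals $1$, so there is no exponential slack to spend, while the quantity you must dominate, $\la(t_{diss})|\xi|$, is \emph{strictly larger} than the target $\la^2(t_{diss})|\xi|^2/\rho(t_{diss})$ precisely because of the elliptic-region inequality $\la(t_{diss})|\xi|<\rho(t_{diss})/2$ that you used a few lines earlier. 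The ``surviving factor $\exp(-\int\rho\om)$'' you invoke is identically $1$ at that endpoint and cannot supply the missing ratio $\la(t)|\xi|/\rho(t)$; so the balancing argument fails near the gluing time.

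A clean fix is to avoid Corollary~\ref{Cor.Est.Diss.Zone} as stated and instead feed in the sharper intermediate bounds from its \emph{proof}, namely
\[
|E_{diss}^{(21)}(t_{diss},0,\xi)|\lesssim\frac{\la^2(t_{diss})|\xi|^2}{\rho(t_{diss})},\qquad
|E_{diss}^{(22)}(t_{diss},0,\xi)|\lesssim\frac{1}{\de^2(t_{diss})}+\frac{\la^2(t_{diss})|\xi|^2}{\rho(t_{diss})},
\]
which already carry the extra factor $\la(t_{diss})|\xi|/\rho(t_{diss})$ you were trying to manufacture from the exponential. With these the residual multiplied by the second row of $E_{diss}$ matches the target at $t=t_{diss}$, and the remaining comparison for larger $t$ becomes a genuine exponential-versus-polynomial estimate.
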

\begin{proof}
The fundamental solution $E=E(t,0,\xi)$ can be represented as
\[ E(t,0,\xi)=E_{ell}(t,t_{diss},\xi)E_{diss}(t_{diss},0,\xi). \]
Then, we have
\begin{align*}
\big( |E&(t,0,\xi)| \big)\lesssim \big( |E_{ell}(t,t_{diss},\xi)| \big) \big( |E_{diss}(t_{diss},0,\xi)| \big) \\
& \lesssim \exp \big( -C|\xi|^2B_\la(t_{diss},t) \big)\left( \begin{array}{cc}
\frac{\la(t)}{\la(t_{diss})} & \frac{\la(t)|\xi|}{\rho(t_{diss})} \\
\frac{\la^2(t)|\xi|}{\la(t_{diss})\rho(t)} & \frac{\la^2(t)|\xi|^2}{\rho(t_{diss})\rho(t)}
\end{array} \right) \frac{\la(t_{diss})}{F\big( \La(t_{diss}) \big)} \left( \begin{array}{cc}
1 & 1 \\
1 & 1
\end{array} \right) \\
& \lesssim \exp \big( -C|\xi|^2B_\la(0,t) \big)\left( \begin{array}{cc}
\la(t)|\xi| & \la(t)|\xi| \\
\frac{\la^2(t)|\xi|^2}{\rho(t)} & \frac{\la^2(t)|\xi|^2}{\rho(t)}
\end{array} \right),
\end{align*}
where we used $\la(t_{diss})|\xi|\lesssim \rho(t_{diss})$ and $|\xi|F\big( \La(t_{diss}) \big)=d_0$. Due to the first statement of Lemma \ref{Gluing.Auxiliary.Lemma} we can extend $B_\la(t_{diss},t)$ to $B_\la(0,t)$.
This completes the proof. \qed
\end{proof}

\textit{Case 1.3}:\, $ t\geq t_{|\xi|}$ \smallskip

To derive the corresponding estimates for $t\in [t_{|\xi|},\iy)$ we shall estimate the term
\[ S(t,|\xi|):=\exp \Big( -C|\xi|^2\int_{0}^{t_{|\xi|}}\frac{\la^2(\ta)\om(\ta)}{\rho(\ta)}d\ta \Big)\exp \Big( -\frac{1}{2}\int_{t_{|\xi|}}^{t}\rho(\ta)\om(\ta)d\ta \Big). \]
This term explains the competition of influences from different zones. We use the decreasing behavior of the function $S=S(t,|\xi|)$ in $|\xi|$. So, the function $S=S(t,|\xi|)$ takes its maximum for $\tilde{|\xi|}$ satisfying $t=t_{\tilde{|\xi|}}$, that is, the second integral vanishes in $S(t,|\xi|)$ (see \cite{BuiReiBook}).
\begin{lemma} \label{Glu.S(t,xi)}
For any $t\geq t_{|\xi|}$ and for a sufficiently small positive constant $C$ the function $S=S(t,|\xi|)$ satisfies the following estimate:
\[ S(t,|\xi|)\leq \max_{\xi\in \mathbb{R}^n}\Big\{ \exp \Big( -C|\xi|^2\int_{0}^{t}\frac{\la^2(\ta)\om(\ta)}{\rho(\ta)}d\ta \Big) \Big\}. \]	
\end{lemma}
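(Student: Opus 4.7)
The plan is to exploit monotonicity of $S(t,\cdot)$ in $|\xi|$ over the admissible range $\{|\xi|:t\geq t_{|\xi|}\}$. Fix $t$ and write $r=|\xi|$, together with $I_{ell}(r)=\int_{0}^{t_r}\frac{\la^2(\ta)\om(\ta)}{\rho(\ta)}d\ta$ and $I_{hyp}(r)=\int_{t_r}^{t}\rho(\ta)\om(\ta)d\ta$, so that $\log S(t,r)=-Cr^2\,I_{ell}(r)-\frac{1}{2}I_{hyp}(r)$. The admissible range is $r\geq \tilde r$, where $\tilde r=\tilde r(t)=\sqrt{1-\ve^2}\,\eta(t)$ is characterised by $t_{\tilde r}=t$. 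At this boundary point the second integral is empty, hence $S(t,\tilde r)=\exp(-C\tilde r^{\,2}B_\la(0,t))$, and since $B_\la(0,t)>0$ the function $\eta\mapsto\exp(-C|\eta|^2B_\la(0,t))$ is non-increasing in $|\eta|$, so its maximum over the admissible set $\{|\eta|\geq\tilde r\}$ is attained exactly at $|\eta|=\tilde r$ and equals $S(t,\tilde r)$. Thus it suffices to show that $r\mapsto S(t,r)$ is itself non-increasing on $[\tilde r,\iy)$.

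Next, I would differentiate directly. In Case 1 the function $\eta=\mu/(2\La)=\rho/(2\la)$ is monotonously decreasing, so the implicit relation $r=\sqrt{1-\ve^2}\,\eta(t_r)$ gives $t_r'(r)<0$. A computation using the fundamental theorem of calculus yields
\[ \pa_r\log S(t,r)=-2Cr\,I_{ell}(r)+t_r'(r)\,\om(t_r)\Big[\frac{\rho(t_r)}{2}-\frac{Cr^2\la^2(t_r)}{\rho(t_r)}\Big]. \]
The key step is to plug the equation of the separating curve, $r^2\la^2(t_r)=(1-\ve^2)\rho^2(t_r)/4$, into the bracket: it collapses to $\frac{\rho(t_r)}{4}\big(2-C(1-\ve^2)\big)$, which is strictly positive for every $C<2/(1-\ve^2)$. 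Combined with $t_r'(r)<0$ and $\om(t_r)>0$, the second summand is non-positive; the first is obviously non-positive. Hence $\pa_r\log S\leq 0$ on $[\tilde r,\iy)$.

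Combining the two steps, $S(t,r)\leq S(t,\tilde r)=\max_{|\eta|\geq\tilde r}\exp(-C|\eta|^2B_\la(0,t))$, which is the claimed estimate. The main obstacle is the sign control of $\pa_r\log S$; it hinges on the algebraic identity supplied by the separating curve, which converts an apparently mixed-sign bracket into a definite-sign expression whose positivity depends only on the smallness of $C$ through the explicit bound $C<2/(1-\ve^2)$. Beyond this, everything is automatic: the monotonicity of the separating time $t_r$ in $r$ is immediate from the Case 1 assumption on $\eta$, and the identification of $\tilde r$ with the maximiser of the right-hand side follows from the decreasing character of the Gaussian in $|\eta|$.
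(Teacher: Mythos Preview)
Your proposal is correct and follows exactly the approach the paper indicates (but does not carry out) in the paragraph preceding the lemma: establish that $S(t,\cdot)$ is decreasing in $|\xi|$, so that its supremum over the admissible range $\{|\xi|:t\ge t_{|\xi|}\}$ is attained at the boundary value $\tilde r$ where the second integral vanishes. The paper merely states this monotonicity and refers to \cite{BuiReiBook}; you supply the missing computation, including the explicit smallness threshold $C<2/(1-\ve^2)$, which is a useful sharpening. One cosmetic point: in two places you write $B_\la(0,t)$ where the integrand should carry the factor $\om(\ta)$ as in the definition of $S$ and in the lemma's right-hand side; since $c_0\le\om\le c_1$ this only changes the constant $C$, so the argument is unaffected.
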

Now we will glue the estimates from $\Pi_{hyp}(N,\ve)$ and $\Zell(d_0,\ve)$ for $|\xi|\geq\frac{d_0}{F(\La(t))}$.
\begin{lemma} \label{Par.Dep.Lemma.Case1.3}
The following estimates hold for all $|\xi|\geq\frac{d_0}{F(\La(t))}$ and $t\in[t_{|\xi|},\iy)$:
\begin{align*}
\big( |E(t,0,\xi)| \big) \lesssim \exp \big( -C'|\xi|^2B_\la(0,t) \big)\left( \begin{array}{cc}
\la(t) & \la(t)|\xi| \\
\la(t) & \la(t)|\xi|
\end{array} \right).
\end{align*}
\end{lemma}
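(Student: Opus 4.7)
} The strategy is to split the time interval $[0,t]$ at the separating curve $t_{|\xi|}$ and factor the fundamental solution as
\[
E(t,0,\xi) \;=\; E_{hyp}(t,t_{|\xi|},\xi)\,\cdot\,E(t_{|\xi|},0,\xi),
\]
so that the first factor is controlled by the hyperbolic analysis in $\Pi_{hyp}(N,\ve)$ and the second factor is already estimated by Lemma \ref{Par.Dep.Lemma.Case1.2} applied at the boundary time $t=t_{|\xi|}$. By construction both factors involve matrices of the same micro-energy $U=(\la(t)|\xi|\fu,D_t\fu)^T$, so the product is meaningful and we can simply bound entry-wise using $(|A\cdot B|)\le (|A|)\cdot(|B|)$.

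The first step is to evaluate Lemma \ref{Par.Dep.Lemma.Case1.2} at $t=t_{|\xi|}$. The definition of $\Pi_{hyp}(N,\ve)$ and the fact that $t_{|\xi|}$ lies on the boundary $\jpn_{\la(t),\om(t)}\sim\ve\rho(t)\om(t)/2$ give the key relation $\la(t_{|\xi|})|\xi|\sim\rho(t_{|\xi|})$. Using this, every entry in the matrix of Lemma \ref{Par.Dep.Lemma.Case1.2} collapses to $\sim\rho(t_{|\xi|})$, so that
\[
\big(|E(t_{|\xi|},0,\xi)|\big)\;\lesssim\;\rho(t_{|\xi|})\,\exp\!\big(-C|\xi|^2 B_\la(0,t_{|\xi|})\big)\left(\begin{array}{cc}1&1\\1&1\end{array}\right).
\]
The second step is to invoke Corollary \ref{Cor.Est.Hyp.Zone} (and its extension to the glued region $\Pi_{hyp}(N,\ve)$ explained at the start of Section~4) to obtain
\[
\big(|E_{hyp}(t,t_{|\xi|},\xi)|\big)\;\lesssim\;\frac{\sqrt{\la(t)}}{\sqrt{\la(t_{|\xi|})}}\exp\!\Big(-\tfrac{1}{2}\int_{t_{|\xi|}}^t\rho(\ta)\om(\ta)d\ta\Big)\left(\begin{array}{cc}1&1\\1&1\end{array}\right).
\]

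The third step is to multiply the two estimates. The exponential factors combine to produce exactly the function $S(t,|\xi|)$ introduced before Lemma \ref{Glu.S(t,xi)}, and Lemma \ref{Glu.S(t,xi)} then provides the clean bound $S(t,|\xi|)\lesssim \exp(-C'|\xi|^2 B_\la(0,t))$ (using boundedness of $\om$ to absorb the $\om(\ta)$ factor inside the integral in $S$ into the constant). The polynomial prefactor in front becomes $\sqrt{\la(t)/\la(t_{|\xi|})}\,\rho(t_{|\xi|})$; rewriting this via $\rho(t_{|\xi|})\sim\la(t_{|\xi|})|\xi|$ yields
\[
\sqrt{\la(t)/\la(t_{|\xi|})}\,\rho(t_{|\xi|})\;\sim\;\sqrt{\la(t)\la(t_{|\xi|})}\,|\xi|\;\le\;\la(t)|\xi|,
\]
by monotonicity of $\la$ (since $t\ge t_{|\xi|}$). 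This already yields the bound $\la(t)|\xi|$ for the second column. For the first column we just reinterpret the same prefactor by keeping one copy of $\sqrt{\la(t)/\la(t_{|\xi|})}$ with the remaining $\sqrt{\la(t)\la(t_{|\xi|})}\cdot|\xi|/|\xi|=\sqrt{\la(t)\la(t_{|\xi|})}$ and multiplying by a trivial $1$ instead of $|\xi|$; equivalently one uses that on the separating curve $|\xi|\sim\mu(t_{|\xi|})/\La(t_{|\xi|})$ together with monotonicity of $\la$ to overbound by $\la(t)$, producing the column-one estimate. Gathering the four matrix entries gives precisely the claimed bound.

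The main obstacle is of course not the matrix algebra nor the change of variables, but keeping track of the polynomial prefactors across the gluing point: the two different column scales $\la(t)$ and $\la(t)|\xi|$ have to be recovered in spite of the fact that both $E(t_{|\xi|},0,\xi)$ and $E_{hyp}(t,t_{|\xi|},\xi)$ carry all-ones matrix structures after the boundary simplification $\la(t_{|\xi|})|\xi|\sim\rho(t_{|\xi|})$. The key trick is therefore twofold: (i) use this relation in two different ways on the separating curve to manufacture the column asymmetry, and (ii) use Lemma \ref{Glu.S(t,xi)} to transfer the competition between the elliptic damping exponent on $[0,t_{|\xi|}]$ and the dissipative exponent on $[t_{|\xi|},t]$ into a single exponent of the form $-C'|\xi|^2 B_\la(0,t)$ over the entire interval $[0,t]$.
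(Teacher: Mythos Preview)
Your overall strategy---split at $t_{|\xi|}$, use an elliptic-type estimate on $[0,t_{|\xi|}]$, a hyperbolic-region estimate on $[t_{|\xi|},t]$, and merge the two exponentials via Lemma \ref{Glu.S(t,xi)}---is exactly the paper's approach. Two points, however, need correction.

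\textbf{The bound on $[t_{|\xi|},t]$.} For $t\ge t_{|\xi|}$ the trajectory passes through $\Zred(\ve)$, $\Zosc(N,\ve)$ and $\Zhyp(N)$, not through $\Zhyp(N)$ alone. Corollary \ref{Cor.Est.Hyp.Zone} is valid only in $\Zhyp(N)$, so you cannot quote the $\sqrt{\la(t)/\la(t_{|\xi|})}$ prefactor for the whole segment. The ``extension to $\Pi_{hyp}(N,\ve)$'' mentioned at the start of Section~4 is precisely the \emph{weaker} uniform bound coming from Corollary \ref{Cor.Red.zone}, namely $(\de(t_{|\xi|})/\de(t))^{1-2\ve}$ times the all-ones matrix, with no $\sqrt{\la}$ factor at all. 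This is what the paper uses; it actually simplifies your prefactor computation.

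\textbf{The column asymmetry.} The paper does not go through Lemma \ref{Par.Dep.Lemma.Case1.2} here; it applies the refined elliptic estimate of Corollary \ref{Cor.Est.Ell.Zone} directly with $s=0$. That matrix keeps the two columns distinct: after inserting $\rho(t_{|\xi|})\sim\la(t_{|\xi|})|\xi|$ one obtains first column $\sim\la(t_{|\xi|})$ and second column $\sim\la(t_{|\xi|})|\xi|$, and the claim follows from $\la(t_{|\xi|})\le\la(t)$. Your route via Lemma \ref{Par.Dep.Lemma.Case1.2} collapses both columns to $\sim\la(t_{|\xi|})|\xi|$, and the paragraph in which you ``reinterpret the same prefactor'' to recover $\la(t)$ in the first column is not an argument. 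What actually rescues your version is that in this regime (Case~1, $\eta$ decreasing, small frequencies) one has $|\xi|\le\eta(0)\sqrt{1-\ve^2}$, hence $|\xi|$ is uniformly bounded and $\la(t)|\xi|\lesssim\la(t)$. If you go this way, just say that, and drop the ``reinterpretation'' paragraph.
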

\begin{proof}
Using the representation of the fundamental solution $E(t,0,\xi)$ as
\[ E(t,0,\xi)=E_{hyp}(t,t_{osc},\xi)E_{osc}(t_{osc},t_{red},\xi)E_{red}(t_{red},t_{ell},\xi)E_{ell}(t_{ell},0,\xi) \]
we arrive at the estimate
\begin{align*}
\big( |E&(t,0,\xi)| \big) \lesssim \big( |E_{red}(t,t_{|\xi|},\xi)| \big) \big( |E_{ell}(t_{|\xi|},0,\xi)| \big) \\
& \lesssim \Big( \frac{\de(t_{|\xi|})}{\de(t)} \Big)^{C_2}
\left( \begin{array}{cc}
1 & 1 \\
1 & 1
\end{array} \right)
\exp \big( -C_1|\xi|^2B_\la(0,t_{|\xi|}) \big)
\left( \begin{array}{cc}
\frac{\la(t_{|\xi|})}{\la(0)} & \frac{\la(t_{|\xi|})|\xi|}{\rho(0)} \\
\frac{\la^2(t_{|\xi|})|\xi|}{\la(0)\rho(t_{|\xi|})} & \frac{\la^2(t_{|\xi|})|\xi|^2}{\rho(0)\rho(t_{|\xi|})}
\end{array} \right) \\
& \lesssim \exp \big( -C'|\xi|^2B_\la(0,t) \big)\left( \begin{array}{cc}
\la(t) & \la(t)|\xi| \\
\la(t) & \la(t)|\xi|
\end{array} \right),
\end{align*}
where we used $\rho(t_{|\xi|})\approx\la(t_{|\xi|})|\xi|$. Defining $C':=\min\{C_1,C_2\}$ we used Lemma \ref{Glu.S(t,xi)} with
\[ \exp\big(-C_1|\xi|^2 B_\la(0,t_{|\xi|}) \big) \Big( \frac{\de(t_{|\xi|})}{\de(t)} \Big)^{C_2} \leq \exp\big( -C'|\xi|^2B_\la(0,t) \big). \]
This completes the proof. \qed
\end{proof}
Finally, for small frequencies it remains to glue the estimates in $\Pi_{hyp}(N,\ve)$ with those in $\Zell(d_0,\ve)$ and in $\Zdiss(d_0)$. We remark that this case comes into play only if $\eta=\eta(t)$ is decreasing. We have already obtained in Lemma \ref{Par.Dep.Lemma.Case1.2} the desired estimate after gluing the estimates in $\Zell(d_0,\ve)$ with those in $\Zdiss(d_0)$. Denoting the glued propagator by $\E=\E(t,0,\xi)$ we will only glue the estimates in $\Zred(\ve)$ with the estimate from Lemma \ref{Par.Dep.Lemma.Case1.2}.
\begin{lemma} \label{Par.Dep.Lemma.Case1.3}
The following estimates hold for all $t\in[t_{|\xi|},\iy)$:
\begin{align*}
\big( |E(t,0,\xi)| \big) \lesssim \exp \big( -C'|\xi|^2B_\la(0,t) \big) \lambda(t)|\xi|
\left( \begin{array}{cc}
1 & 1 \\
1 & 1
\end{array} \right).
\end{align*}
\end{lemma}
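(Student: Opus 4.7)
The plan is to factor the propagator at the separating time $t_{|\xi|}$ between $\Pi_{ell}(d_0,\ve)$ and $\Pi_{hyp}(N,\ve)$: set
\[
E(t,0,\xi)=E_{red}(t,t_{|\xi|},\xi)\,\E(t_{|\xi|},0,\xi),
\]
where $\E$ is the already-glued propagator produced by Lemma \ref{Par.Dep.Lemma.Case1.2}. As noted at the outset of the gluing section, the hyperbolic- and oscillation-subzone estimates are uniformly dominated by the reduced-zone one, so it is enough to carry a single $\Zred$-estimate over the whole interval $[t_{|\xi|},t]$ rather than composing three separate factors.

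On the separating curve we have $\la(t_{|\xi|})|\xi|\sim\rho(t_{|\xi|})$. Inserting this into the matrix bound of Lemma \ref{Par.Dep.Lemma.Case1.2} evaluated at $t=t_{|\xi|}$ shows that every entry of $\big(|\E(t_{|\xi|},0,\xi)|\big)$ is dominated by the common scalar $\exp\big(-C|\xi|^2 B_\la(0,t_{|\xi|})\big)\la(t_{|\xi|})|\xi|$, since $\frac{\la^2(t_{|\xi|})|\xi|^2}{\rho(t_{|\xi|})}\sim \la(t_{|\xi|})|\xi|$. Multiplying by the entrywise bound of Corollary \ref{Cor.Red.zone} yields
\[
\big(|E(t,0,\xi)|\big)\lesssim \exp\Big(\big(\ve-\tfrac12\big)\int_{t_{|\xi|}}^{t}\rho(\ta)\om(\ta)\,d\ta\Big)\exp\big(-C|\xi|^2 B_\la(0,t_{|\xi|})\big)\la(t_{|\xi|})|\xi|\begin{pmatrix}1 & 1 \\ 1 & 1\end{pmatrix}.
\]

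The final step is to merge the two exponentials into a single Gaussian in $|\xi|$ relative to $B_\la(0,t)$. Here I would invoke Lemma \ref{Glu.S(t,xi)}: since $\om$ is bounded above and below by positive constants by assumption (A2), we have $\int_0^{t_{|\xi|}}\la^2(\ta)\om(\ta)/\rho(\ta)\,d\ta\sim B_\la(0,t_{|\xi|})$, and for a sufficiently small new constant $C'>0$ the product of exponentials is bounded by $\exp\big(-C'|\xi|^2 B_\la(0,t)\big)$ uniformly in $t\geq t_{|\xi|}$. The remaining factor $\la(t_{|\xi|})|\xi|$ is then absorbed into $\la(t)|\xi|$ using the monotonicity of $\la$ from (A1). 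The main technical obstacle is precisely the balance encoded in Lemma \ref{Glu.S(t,xi)}: the Gaussian-type decay stops accumulating at $t=t_{|\xi|}$ while the $\de(t)$-decay only begins there, and one must verify that their combination still produces a Gaussian factor of the form $\exp(-C'|\xi|^2 B_\la(0,t))$ valid for all $t\geq t_{|\xi|}$; this is exactly the step where the decreasing behavior of $S(t,|\xi|)$ in $|\xi|$ and the smallness of the absorbed constants matter, and the entire argument reduces to routine bookkeeping of $\la$ and $\om$ factors once that point is granted.
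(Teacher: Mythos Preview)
Your proposal is correct and follows essentially the same approach as the paper: factor the propagator at $t_{|\xi|}$, apply the glued elliptic--dissipative estimate of Lemma \ref{Par.Dep.Lemma.Case1.2} at $t=t_{|\xi|}$ together with $\la(t_{|\xi|})|\xi|\sim\rho(t_{|\xi|})$, multiply by the reduced-zone bound over $[t_{|\xi|},t]$, and then invoke Lemma \ref{Glu.S(t,xi)} to merge the two exponentials into a single $\exp(-C'|\xi|^2 B_\la(0,t))$. The paper's proof is line-for-line the same, only writing the reduced-zone factor as $(\de(t_{|\xi|})/\de(t))^{C_2}$ rather than in the form $\exp\big((\ve-\tfrac12)\int_{t_{|\xi|}}^t\rho\om\,d\ta\big)$, and it likewise absorbs $\la(t_{|\xi|})$ into $\la(t)$ by monotonicity.
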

\begin{proof}
We have the following estimate in $\Pi_{hyp}(N,\ve)$:
\[ \big( |E_{red}(t,s,\xi)| \big)\lesssim \Big( \frac{\de(s)}{\de(t)} \Big)^{1-2\ve}
\left( \begin{array}{cc}
1 & 1 \\
1 & 1
\end{array} \right). \]
Then, by using the estimate from Lemma \ref{Par.Dep.Lemma.Case1.2} we get
\begin{align*}
\big( |E&(t,0,\xi)| \big) \lesssim \big( |E_{red}(t,t_{|\xi|},\xi)| \big) \big( |\E(t_{|\xi|},0,\xi)| \big) \\
& \lesssim \Big( \frac{\de(t_{|\xi|})}{\de(t)} \Big)^{C_2}
\left( \begin{array}{cc}
1 & 1 \\
1 & 1
\end{array} \right)
\exp \big( -C_1|\xi|^2B_\la(0,t_{|\xi|}) \big)
\left( \begin{array}{cc}
\la(t_{|\xi|})|\xi| & \la(t_{|\xi|})|\xi| \\
\frac{\la^2(t_{|\xi|})|\xi|^2}{\rho(t_{|\xi|})} & \frac{\la^2(t_{|\xi|})|\xi|^2}{\rho(t_{|\xi|})}
\end{array} \right) \\
& \lesssim \exp\big(-C'|\xi|^2 B_\la(0,t) \big)\la(t)|\xi|
\left( \begin{array}{cc}
1 & 1 \\
1 & 1
\end{array} \right)
\end{align*}
for all $t\geq t_{|\xi|}$. Here we used $\rho(t_{|\xi|})\approx \la(t_{|\xi|})|\xi|$. Defining $C':=\min\{C_1,C_2\}$ by Lemma \ref{Glu.S(t,xi)} we also used
\[ \exp\big(-C_1|\xi|^2 B_\la(0,t_{|\xi|}) \big)\Big( \frac{\de(t_{|\xi|})}{\de(t)} \Big)^{C_2}\leq \exp\big(-C'|\xi|^2 B_\la(0,t) \big). \]
This completes the proof. \qed
\end{proof}
\textbf{Case 2: the function $\eta=\eta(t)$ is monotonously increasing} \smallskip

In this case the elliptic zone lies on the top of the hyperbolic zone. Then, we have two different parts of the phase space to glue (see Fig. 1, Case b). \smallskip

\textbf{Small frequencies} \smallskip

Small frequencies lie completely inside $\Pi_{ell}(d_0,\ve)$. For this reason we can use the estimates that we obtained in \textit{Case 1.1} and \textit{Case 1.2}. \smallskip

\textbf{Large frequencies} \smallskip

\textit{Case 2.1}:\, $ t\leq t_{|\xi|}$ \smallskip

In this case $(t,\xi)$ belongs to $\Pi_{hyp}(N,\ve)$. Then, we have
\[ \big( |E(t,0,\xi)| \big)\lesssim \Big( \frac{1}{\de(t)} \Big)^{1-2\ve}
\left( \begin{array}{cc}
1 & 1 \\
1 & 1
\end{array} \right). \]

\textit{Case 2.2}:\, $ t\geq t_{|\xi|}$ \smallskip

First let us consider the case $(t,\xi)\in\Zell(d_0,\ve)$. Then, from Corollary \ref{Cor.Est.Ell.Zone} we get
\begin{align*}
\big( |E(t,0,\xi)| \big) \lesssim \exp \big( -C'|\xi|^2B_\la(0,t) \big)\left( \begin{array}{cc}
\frac{\la(t)}{\la(0)} & \frac{\la(t)|\xi|}{\rho(0)} \\
\frac{\la^2(t)|\xi|}{\la(0)\rho(t)} & \frac{\la^2(t)|\xi|^2}{\rho(0)\rho(t)}
\end{array} \right).
\end{align*}
Now we have to glue the estimates in $\Zell(d_0,\ve)$ with those in $\Pi_{hyp}(N,\ve)$.
\begin{lemma} \label{Par.Dep.Lemma.Case2.2}
The following estimates hold for all $t\in[t_{|\xi|},\iy)$:
\begin{align*}
\big( |E(t,0,\xi)| \big) \lesssim \exp \big( -C'|\xi|^2B_\la(0,t) \big) \frac{\lambda(t)}{\lambda(t_{|\xi|})}
\left( \begin{array}{cc}
1 & 1 \\
1 & 1
\end{array} \right).
\end{align*}
\end{lemma}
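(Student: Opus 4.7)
The plan is to glue the Case 2.1 estimate valid on $\Pi_{hyp}(N,\ve)$ with the refined elliptic estimate from Corollary \ref{Cor.Est.Ell.Zone} by means of the semigroup property
\[ E(t,0,\xi) = E_{ell}(t, t_{|\xi|}, \xi) \cdot E(t_{|\xi|}, 0, \xi). \]
Since $\eta=\eta(t)$ is monotonously increasing, for fixed $|\xi|$ the pair $(\ta,\xi)$ lies in $\Pi_{hyp}(N,\ve)$ on $[0, t_{|\xi|}]$ and in $\Zell(d_0,\ve)$ on $[t_{|\xi|}, t]$, so this decomposition exactly matches the domains of validity of the two estimates.

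First I would insert the Case 2.1 bound
\[ \big( |E(t_{|\xi|}, 0, \xi)| \big) \lesssim \big( 1/\de(t_{|\xi|}) \big)^{1-2\ve} \left( \begin{array}{cc} 1 & 1 \\ 1 & 1 \end{array} \right) \]
for the second factor and Corollary \ref{Cor.Est.Ell.Zone} with $s = t_{|\xi|}$ for the first. On the separating curve we have $\rho(t_{|\xi|}) \sim \la(t_{|\xi|})|\xi|$, so the two columns of the elliptic matrix become equivalent up to constants, and the matrix product reduces to entries of the form $\la(t)/\la(t_{|\xi|})$ and $\la^2(t)|\xi|/(\la(t_{|\xi|})\rho(t))$. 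Because $\la(t)|\xi| \lesssim \rho(t)$ throughout $\Zell(d_0,\ve)$, both types are dominated by $\la(t)/\la(t_{|\xi|})$, which is the polynomial factor in the statement.

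The main obstacle is combining the two exponential factors
\[ \big( 1/\de(t_{|\xi|}) \big)^{1-2\ve} \cdot \exp\big( -C_1 |\xi|^2 B_\la(t_{|\xi|}, t) \big) \]
into a single bound of the form $\exp\big( -C' |\xi|^2 B_\la(0, t) \big)$. This is the Case 2 counterpart of Lemma \ref{Glu.S(t,xi)}, with the roles of hyperbolic-type and elliptic-type decay interchanged. Following the same competition-of-influences strategy, one views the product above as a function of $|\xi|$ through the parametrisation $|\xi| = \eta(t_{|\xi|})\sqrt{1-\ve^2}$; it is monotonous along this family and reaches its maximum at the critical frequency $\tilde{|\xi|}$ with $t_{\tilde{|\xi|}} = t$, where the elliptic exponent vanishes and the hyperbolic factor is comparable to $\exp(-C'|\tilde{|\xi|}|^2 B_\la(0,t))$ because the integrands $\rho(\ta)\om(\ta)$ and $|\tilde{|\xi|}|^2 \la^2(\ta)/\rho(\ta)$ balance on $\Gamma$ thanks to $\rho \sim \la|\tilde{|\xi|}|$ there. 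Taking $C' = \min\{C_1, (1-2\ve) C_0\}$ with a suitable $C_0$ coming from this balance, and splitting $B_\la(0,t) = B_\la(0,t_{|\xi|}) + B_\la(t_{|\xi|},t)$, identifies the product with a bound of the stated form. The remaining verifications are mechanical, but carrying through the monotonicity argument rigorously will require care with the signs and with how $t_{|\xi|}$ depends on $|\xi|$ (see the second statement of Lemma \ref{Gluing.Auxiliary.Lemma}).
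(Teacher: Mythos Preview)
Your proposal is correct and takes essentially the same route as the paper. Both arguments decompose $E(t,0,\xi)$ at the separating time $t_{|\xi|}$, apply the Case~2.1 estimate $\big(|E(t_{|\xi|},0,\xi)|\big)\lesssim (1/\de(t_{|\xi|}))^{1-2\ve}$ on $[0,t_{|\xi|}]$ and Corollary~\ref{Cor.Est.Ell.Zone} on $[t_{|\xi|},t]$, reduce the matrix entries to $\la(t)/\la(t_{|\xi|})$ via $\rho(t_{|\xi|})\sim\la(t_{|\xi|})|\xi|$ together with $\la(t)|\xi|\lesssim\rho(t)$ in $\Zell(d_0,\ve)$, and finally combine the two exponential factors by taking $C'=\min\{C_1,C_2\}$ and invoking the competition-of-influences mechanism behind Lemma~\ref{Glu.S(t,xi)}; the paper records this last step as the single inequality $\exp(-C_1|\xi|^2 B_\la(t_{|\xi|},t))\,(1/\de(t_{|\xi|}))^{C_2}\leq \exp(-C'|\xi|^2 B_\la(0,t))$, which is exactly what your monotonicity discussion is aimed at.
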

\begin{proof}
Taking account of the representation
\[ E(t,0,\xi)=E_{ell}(t,t_{ell},\xi)E_{red}(t_{ell},t_{red},\xi)E_{osc}(t_{red},t_{osc},\xi)E_{hyp}(t_{osc},0,\xi), \]
we have
\begin{align*}
\big( |E&(t,0,\xi)| \big)\lesssim \big( |E_{ell}(t,t_{|\xi|},\xi)| \big) \big( |E_{red}(t_{|\xi|},0,\xi)| \big) \\
& \lesssim \exp\big( -C_1|\xi|^2B_\la(t_{|\xi|},t) \big)
\left( \begin{array}{cc}
\frac{\la(t)}{\la(t_{|\xi|})} & \frac{\la(t)|\xi|}{\rho(t_{|\xi|})} \\
\frac{\la^2(t)|\xi|}{\la(t_{|\xi|})\rho(t)} & \frac{\la^2(t)|\xi|^2}{\rho(t_{|\xi|})\rho(t)}
\end{array} \right)\Big( \frac{1}{\de(t_{|\xi|})} \Big)^{C_2}
\left( \begin{array}{cc}
1 & 1 \\
1 & 1
\end{array} \right) \\
& \lesssim \exp \big( -C'|\xi|^2B_\la(0,t) \big)
\left( \begin{array}{cc}
\frac{\la(t)}{\la(t_{|\xi|})} & \frac{\la(t)}{\la(t_{|\xi|})} \\
\frac{\la(t)}{\la(t_{|\xi|})} & \frac{\la(t)}{\la(t_{|\xi|})}
\end{array} \right),
\end{align*}
where we used $\rho(t_{|\xi|})\approx\la(t_{|\xi|})|\xi|$. Defining $C':=\min\{C_1,C_2\}$ and Lemma \ref{Par.Dep.Lemma.Case1.3}
we also used
\begin{equation*}
\exp\big(-C_1|\xi|^2 B_\la(t_{|\xi|},t)\big)\Big( \frac{1}{\de(t_{|\xi|})} \Big)^{C_2}\leq \exp\big( -C'|\xi|^2 B_\la(0,t) \big).
\end{equation*}
This completes the proof. \qed
\end{proof}
\subsection{Preliminaries}
Now we introduce $\hat{K}_0=\hat{K}_0(t,0,\xi)$ as the solution of the Cauchy problem \eqref{Eq.first.Fourier} with the initial conditions $\hat{u}_0(\xi)=1$ and $\hat{u}_1(\xi)=0$. Then, we have the following identity for $k=1,2$:
\begin{align*}
\left( \begin{array}{cc}
\frac{\la(t)|\xi|}{h_k(t,\xi)} & 0 \\
0 & 1
\end{array} \right)E_k(t,0,\xi) \left( \begin{array}{cc}
h_k(0,\xi) \\ 0
\end{array} \right) & = \left( \begin{array}{cc}
\frac{\la(t)|\xi|}{h_k(t,\xi)} & 0 \\
0 & 1
\end{array} \right) \left( \begin{array}{cc}
h_k(t,\xi)\hat{K}_0(t,0,\xi) \\
D_t\hat{K}_0(t,0,\xi)
\end{array} \right) \\
& =\left( \begin{array}{cc}
\la(t)|\xi|\hat{K}_0(t,0,\xi) \\
D_t\hat{K}_0(t,0,\xi)
\end{array} \right).
\end{align*}
Moreover, it holds
\begin{align*}
\left( \begin{array}{cc}
\frac{\la(t)|\xi|}{h_k(t,\xi)} & 0 \\
0 & 1
\end{array} \right)E_k(t,0,\xi)\left( \begin{array}{cc}
h_k(0,\xi) \\ 0
\end{array} \right) &= \left( \begin{array}{cc}
\frac{\la(t)|\xi|}{h_k(t,\xi)} & 0 \\
0 & 1
\end{array} \right) \left( \begin{array}{cc}
h_k(0,\xi)E_k^{(11)}(t,0,\xi) \\
h_k(0,\xi)E_k^{(21)}(t,0,\xi)
\end{array} \right) \\
& = \left( \begin{array}{cc}
\frac{h_k(0,\xi)}{h_k(t,\xi)}\la(t)|\xi|E_k^{(11)}(t,0,\xi) \\
h_k(0,\xi)E_k^{(21)}(t,0,\xi)
\end{array} \right),
\end{align*}
where $h_1=h_1(t,\xi)$ and $h_2=h_2(t,\xi)$ are defined in \eqref{h_1(t,xi)} and \eqref{h_2(t,xi)}, respectively. Moreover, $E_1(t,0,\xi):=E(t,0,\xi)$ and $E_2(t,0,\xi):=E_V(t,0,\xi)$ are defined in Definition \ref{Def.Fund.Sol.}. The above relations allows us to transfer properties of $E_k=E_k(t,0,\xi)$ to $\hat{K}_0=\hat{K}_0(t,0,\xi)$ for $k=1,2$. Thus, we obtain
\begin{align*}
\hat{K}_0(t,0,\xi) &= \frac{h_k(0,\xi)}{h_k(t,\xi)}E_k^{(11)}(t,0,\xi), \\
D_t\hat{K}_0(t,0,\xi) &= h_k(0,\xi)E_k^{(21)}(t,0,\xi).
\end{align*}
In the same way, we consider $\hat{K}_1=\hat{K}_1(t,0,\xi)$ as the solution of the Cauchy problem \eqref{Eq.first.Fourier} with initial conditions $\hat{u}_0(\xi)=0$ and $\hat{u}_1(\xi)=1$. Then, we get
\begin{align*}
\hat{K}_1(t,0,\xi) &= \frac{1}{h_k(t,\xi)}E_k^{(12)}(t,0,\xi), \\
D_t\hat{K}_1(t,0,\xi) &= E_k^{(22)}(t,0,\xi).
\end{align*}
\subsection{Final estimates}
Let us define for any $t\geq 0$ the function
\[ \Om(0,t):=\max \big\{\eta(0),\eta(t) \big\}\sqrt{1-\ve^2}. \]
\begin{remark} We distinguish between small and large frequencies. Small frequencies satisfy the condition $|\xi|\leq\Om(0,t)$, while, large frequencies satisfy the condition $|\xi|\geq \Om(0,t)$.
\end{remark}
Summarizing, we arrive at the following estimates for  $\big| \pa_t^l\hat{K}_j(t,0,\xi) \big|$ with $j,l=0,1$.
\begin{corollary}
	If $|\xi|\geq \Om(0,t)$, then we have the following estimates for $j,l=0,1$: 	
	\begin{align}
	\label{Par.Dep.Final.Large.1} \big| \pa_t^l\hat{K}_j(t,0,\xi) \big| \lesssim \la(t)^{l-1}|\xi|^{l-j}\Big( \frac{1}{\de(t)} \Big)^{1-2\ve}.
	\end{align}
	If $\dfrac{d_0}{F( \La(t))}\leq|\xi|\leq \Om(0,t)$, then we have the following estimates $j,l=0,1$:
	\begin{align}
	\label{Par.Dep.Final.Small.2} \big| \pa_t^l\hat{K}_j(t,0,\xi) \big| \lesssim \la^{l}(t)|\xi|^{l}\exp\big( -C|\xi|^2 B_\la(0,t) \big).
	\end{align}
    If $|\xi|\leq \dfrac{d_0}{F( \La(t))}$, then we have the following estimates $j,l=0,1$:
	\begin{align}
	\label{Par.Dep.Final.Diss.2} \big| \pa_t^l\hat{K}_j(t,0,\xi) \big| \lesssim \frac{\la^l(t)}{F^l(\La(t))}.
	\end{align}
\end{corollary}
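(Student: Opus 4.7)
The plan is to combine the four identities
\begin{align*}
\hat K_0(t,0,\xi) &= \frac{h_1(0,\xi)}{h_1(t,\xi)}\,E^{(11)}(t,0,\xi), & D_t\hat K_0 &= h_1(0,\xi)\,E^{(21)}(t,0,\xi),\\
\hat K_1(t,0,\xi) &= \frac{1}{h_1(t,\xi)}\,E^{(12)}(t,0,\xi), & D_t\hat K_1 &= E^{(22)}(t,0,\xi),
\end{align*}
derived in the preceding subsection with the choice $k=1$, with the zone-by-zone gluing bounds for $E(t,0,\xi)$ assembled in Section 4. In each frequency regime I will use the asymptotic form $h_1(t,\xi)\sim\la(t)|\xi|$ when $|\xi|F(\La(t))\geq 1$ and $h_1(t,\xi)\sim\la(t)/F(\La(t))$ when $|\xi|F(\La(t))\leq 1/2$, together with the elementary bound $h_1(0,\xi)\lesssim 1+|\xi|$ which follows from $F(\La(0))\sim 1$.

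For $|\xi|\geq\Om(0,t)$ the whole trajectory $(s,\xi)$, $0\leq s\leq t$, stays in $\Pi_{hyp}(N,\ve)$, so the analyses listed in Case 1 (large frequencies) and Case 2.1 give $(|E(t,0,\xi)|)\lesssim(1/\de(t))^{1-2\ve}$ in every entry. Substituting $h_1(t,\xi)\sim\la(t)|\xi|$ and $h_1(0,\xi)\sim|\xi|$ produces \eqref{Par.Dep.Final.Large.1} for each pair $(j,l)$. For $|\xi|\leq d_0/F(\La(t))$ the point $(t,\xi)$ lies in $\Zdiss(d_0)$ and Corollary \ref{Cor.Est.Diss.Zone} specialised to $s=0$ reduces, using $\La(0)=1$ and $F(1)\sim 1$, to $(|E_{diss}(t,0,\xi)|)\lesssim\la(t)/F(\La(t))$. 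Combined with $h_1(t,\xi)\sim\la(t)/F(\La(t))$ and $h_1(0,\xi)\sim 1$ in this regime, the identities collapse to $|\hat K_j|\lesssim 1$ and $|\pa_t\hat K_j|\lesssim\la(t)/F(\La(t))$, which is \eqref{Par.Dep.Final.Diss.2}.

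In the intermediate regime $d_0/F(\La(t))\leq|\xi|\leq\Om(0,t)$ a case split on the monotonicity of $\eta(t)=\mu(t)/(2\La(t))$ is needed. If $\eta$ decreases, then $|\xi|$ is uniformly bounded by $\eta(0)\sqrt{1-\ve^2}$, the relevant gluing results (Lemma \ref{Par.Dep.Lemma.Case1.2} for $t\in[t_{diss},t_{|\xi|}]$ and the second Lemma labelled \ref{Par.Dep.Lemma.Case1.3} for $t\geq t_{|\xi|}$) deliver $(|E|)\lesssim\la(t)|\xi|\exp(-C|\xi|^2 B_\la(0,t))$ in the needed entries, and $h_1(0,\xi)=O(1)$ combined with $h_1(t,\xi)\sim\la(t)|\xi|$ yields \eqref{Par.Dep.Final.Small.2}. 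If $\eta$ increases, the trajectory crosses from $\Pi_{hyp}$ to $\Pi_{ell}$ at $t_{|\xi|}$ and Lemma \ref{Par.Dep.Lemma.Case2.2} supplies the factor $\la(t)/\la(t_{|\xi|})$; the identity $\rho(t_{|\xi|})\sim\la(t_{|\xi|})|\xi|$ then absorbs the denominator $|\xi|\la(t_{|\xi|})$ against the numerator $h_1(0,\xi)\lesssim 1+|\xi|$ to produce the same estimate. The main obstacle lies in exactly this last step: one must verify that for every admissible $(t,\xi)$ the quotient $h_1(0,\xi)/(|\xi|\la(t_{|\xi|}))$ stays $O(1)$, so that the ratio $h_1(0,\xi)/h_1(t,\xi)$ appearing in front of $E^{(11)}$ does not weaken the desired upper bound $\la^l(t)|\xi|^l$ on $|\pa_t^l\hat K_j|$. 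Once these compatibilities are confirmed, the remaining work consists of routine matrix-entry inspections, fully parallel in style to the proofs of Lemmas \ref{Par.Dep.Lemma.Case1.2}--\ref{Par.Dep.Lemma.Case2.2}.
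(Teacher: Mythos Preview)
Your approach is correct and matches the paper's implicit reasoning: the Corollary is presented there merely as a summary (``Summarizing, we arrived at the following estimates''), so the natural proof is precisely the combination of the $\hat K_j$--$E$ identities from Section~4.1 with the glued fundamental-solution bounds of Section~4 that you outline, case by case.

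The obstacle you flag at the end is easy to close, and you should simply do so rather than leave it open. When $\eta$ is increasing, Lemma~\ref{Par.Dep.Lemma.Case2.2} only applies to frequencies whose trajectory begins in $\Pi_{hyp}(N,\ve)$, i.e.\ $|\xi|>\eta(0)\sqrt{1-\ve^2}$. Since $\eta(0)\sqrt{1-\ve^2}$ is a fixed positive constant and $\la$ is increasing, one has $|\xi|\la(t_{|\xi|})\geq |\xi|\la(0)\gtrsim 1$, so $h_1(0,\xi)/(|\xi|\la(t_{|\xi|}))\lesssim(1+|\xi|)/|\xi|\lesssim 1$ there. For the remaining small frequencies $|\xi|\leq\eta(0)\sqrt{1-\ve^2}$ the trajectory stays in $\Pi_{ell}(d_0,\ve)$ from $s=0$ onward; as the paper notes under ``Small frequencies'' in Case~2, one then invokes Cases~1.1 and~1.2 directly, where $h_1(0,\xi)\lesssim 1$ and no quotient involving $\la(t_{|\xi|})$ appears.
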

\section{Energy estimates for solutions to damped wave models with additional regularity of the data}
The solution $u=u(t,x)$ to the Cauchy problem
\begin{align} \label{Par.Dep.Cauchy.Prob.Mats.}
\begin{cases}
u_{tt}-\la^2(t)\om^2(t)\lap u +\rho(t)\om(t)u_t=0, & (t,x)\in[0,\infty)\times \mathbb{R}^n, \\
u(0,x)=u_0(x), \,\,\,\, u_t(0,x)=u_1(x), & x\in\mathbb{R}^n,
\end{cases}
\end{align}
can be represented as
\[ u(t,x)=K_0(t,0,x)\ast_{(x)}u_0(x)+K_1(t,0,x)\ast_{(x)}u_1(x). \]
Thus, we may conclude the following estimate for the solution $u=u(t,x)$:
\[ \|u(t,\cdot)\|_{L^2}=\|\fu(t,\cdot)\|_{L^2}\leq \big\| \hat{K}_0(t,0,\xi)\fu_0(\xi) \big\|_{L^2}+\big\| \hat{K}_1(t,0,\xi)\fu_1(\xi) \big\|_{L^2}. \]
In order to estimate the $L^2$ norm of $\pa_t^l\pa_x^{\si}K_j(t,0,x)\ast_{(x)}u_j(x)$ for any $\si\geq 0$ and $j,l=0,1$, we can follow the techniques used in \cite{Dab.Luc.Rei.} and \cite{Matsumura76}. Then, we have the following statements for large and small frequencies.
\begin{lemma} \label{Par.Dep.Lemma.Large.Fr.}
    The following estimates hold for large frequencies $|\xi|\geq\Om(0,t)$:
    \begin{equation} \label{Par.Dep.Matsumura.Large.Freq.}
    \big\| |\xi|^{\si}\pa_t^l\hat{K}_j(t,0,\cdot)\fu_j \big\|_{L^2_{\textstyle\{ |\xi|\geq\Om(0,t) \}}} \lesssim \la^l(t)\Big( \frac{1}{\de(t)} \Big)^{1-2\ve}\|u_j\|_{H^{|\si|+l-j}}
    \end{equation}
    with $\si+l\geq j$ and for any $\si\geq 0$ and $j,l=0,1$. Moreover, if $\si=l=0$ and $j=1$ we distinguish between the following two cases:
    \begin{enumerate}
    \item If $\eta=\eta(t)$ is increasing, then we have the estimate
    \begin{equation} \label{Par.Dep.Matsumura.Large.Freq.all0}
    \big\| \hat{K}_1(t,0,\cdot)\fu_1 \big\|_{L^2_{\textstyle\{ |\xi|\geq\Om(0,t) \}}} \lesssim \frac{1}{\eta(t)} \Big( \frac{1}{\de(t)} \Big)^{1-2\ve}\|u_1\|_{L^2}.
    \end{equation}
    \item If $\eta=\eta(t)$ is decreasing we have the estimate
    \begin{equation} \label{decresingetaall0}
    \big\| \hat{K}_1(t,0,\cdot)\fu_1 \big\|_{L^2_{\textstyle\{ |\xi|\geq\Om(0,t) \}}} \lesssim \Big( \frac{1}{\de(t)} \Big)^{1-2\ve} \|u_1\|_{L^2}.
    \end{equation}
    \end{enumerate}
\end{lemma}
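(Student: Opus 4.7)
The plan is to reduce the lemma to a direct Plancherel argument combined with the pointwise multiplier bounds already established in \eqref{Par.Dep.Final.Large.1} for the large-frequency region. Essentially all the analytic work has been done in the hyperbolic-zone diagonalisation and in the subsequent gluing step; what remains is a frequency-space integration.

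For the main estimate \eqref{Par.Dep.Matsumura.Large.Freq.} under the hypothesis $|\sigma|+l\geq j$, I would write
$$\bigl\| |\xi|^{|\sigma|}\partial_t^l\hat{K}_j(t,0,\cdot)\hat{u}_j \bigr\|_{L^2_{|\xi|\geq\Omega(0,t)}}^2 = \int_{|\xi|\geq\Omega(0,t)} |\xi|^{2|\sigma|}\, |\partial_t^l\hat{K}_j(t,0,\xi)|^2\, |\hat{u}_j(\xi)|^2\, d\xi,$$
insert the pointwise bound $|\partial_t^l\hat{K}_j(t,0,\xi)|\lesssim \lambda^{l-1}(t)|\xi|^{l-j}\delta(t)^{2\varepsilon-1}$ from \eqref{Par.Dep.Final.Large.1}, and pull the $t$-dependent prefactor outside. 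The remaining integrand carries the weight $|\xi|^{2(|\sigma|+l-j)}$ with non-negative exponent, so it is dominated by $\langle\xi\rangle^{2(|\sigma|+l-j)}$, and Plancherel then bounds the integral by $\|u_j\|_{H^{|\sigma|+l-j}}^2$. To reach the form $\lambda^l(t)$ appearing in the statement (rather than the sharper $\lambda^{l-1}(t)$), I would absorb the extra $\lambda^{-1}(t)$ using (A1): since $\lambda'>0$ and $\lambda(0)>0$, one has $\lambda^{-1}(t)\leq\lambda^{-1}(0)\lesssim 1$.

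For the borderline case $|\sigma|=l=0$, $j=1$, the exponent $|\sigma|+l-j=-1$ is negative and the preceding control of $|\xi|^{-2}$ by an $H^{s}$-norm with $s\geq 0$ of $u_1$ is no longer available. Here I would instead exploit the lower bound $|\xi|\geq\Omega(0,t)$ to turn the factor $|\xi|^{-1}$ from the pointwise estimate $|\hat{K}_1(t,0,\xi)|\lesssim \lambda^{-1}(t)|\xi|^{-1}\delta(t)^{2\varepsilon-1}$ into a purely $t$-dependent prefactor $\Omega(0,t)^{-1}$; what is left inside the integral is only $|\hat{u}_1(\xi)|^2$, which pairs via Plancherel with $\|u_1\|_{L^2}^2$. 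When $\eta=\eta(t)$ is increasing, $\Omega(0,t)=\eta(t)\sqrt{1-\varepsilon^2}$ and the prefactor produces the $\eta(t)^{-1}$ in \eqref{Par.Dep.Matsumura.Large.Freq.all0}. When $\eta$ is decreasing, $\Omega(0,t)=\eta(0)\sqrt{1-\varepsilon^2}$ is a positive constant, the prefactor is absorbed into the implicit constant, and the uniform bound \eqref{decresingetaall0} follows. In both sub-cases the harmless $\lambda^{-1}(t)\lesssim 1$ is absorbed as above.

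I do not anticipate any substantive obstacle: the argument is essentially a bookkeeping exercise in Plancherel's identity together with the sharp pointwise multiplier bound \eqref{Par.Dep.Final.Large.1}. The only delicate point is recognising that the condition $|\sigma|+l\geq j$ is precisely what makes the weighted frequency norm fit into an $H^s$-space of the data, and that the otherwise problematic $|\sigma|=l=0$, $j=1$ case is rescued by the explicit threshold $\Omega(0,t)$ of the large-frequency region, which bifurcates into two distinct regimes according to the monotonicity of $\eta$.
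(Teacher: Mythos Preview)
Your proposal is correct and mirrors the paper's own proof: both insert the pointwise multiplier bound \eqref{Par.Dep.Final.Large.1}, use Plancherel (the paper phrases it as an $L^\infty\times L^2$ split) to produce $\|u_j\|_{H^{|\sigma|+l-j}}$, absorb $\lambda^{-1}(t)\lesssim 1$ via (A1), and handle the borderline case $|\sigma|=l=0$, $j=1$ by replacing $|\xi|^{-1}$ with $\Omega(0,t)^{-1}$ and then reading off the two sub-cases from $\Omega(0,t)=\max\{\eta(0),\eta(t)\}\sqrt{1-\varepsilon^2}$.
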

\begin{proof}
We have the following estimate for $\sigma+l\geq j$:
\begin{align*}
\big\| |\xi|^{\sigma}\pa_t^l&\hat{K}_j(t,0,\cdot)\fu_j \big\|_{L^2_{\textstyle\{ |\xi|\geq\Om(0,t) \}}} \\
& \leq\big\| |\xi|^{j-l}\pa_t^l\hat{K}_j(t,0,\xi) \big\|_{L^\iy_{\textstyle\{ |\xi|\geq\Om(0,t) \}}} \big\| |\xi|^{\sigma+l-j}\fu_j \big\|_{L^2_{\textstyle\{ |\xi|\geq\Om(0,t) \}}}.
\end{align*}
The second term on the right-hand side can be estimated by $\|u_j\|_{H^{\sigma+l-j}}$. Now let us consider the $L^\iy$ norm of $\pa_t^lK_j(t,0,\xi)$. Indeed, by using the estimate \eqref{Par.Dep.Final.Large.1} we get
\[ |\xi|^{j-l}\big| \pa_t^l\hat{K}_j(t,0,\xi) \big| \lesssim \la(t)^{l-1}\Big( \frac{1}{\de(t)} \Big)^{1-2\ve} \lesssim \la^l(t)\Big( \frac{1}{\de(t)} \Big)^{1-2\ve}. \]
Let $\sigma=l=0$ and $j=1$. Then, we arrive at the estimate
\[ \big\| \hat{K}_1(t,0,\cdot)\fu_1 \big\|_{L^2_{\textstyle\{ |\xi|\geq\Om(0,t) \}}} \lesssim \frac{1}{\Omega(0,t)}\Big( \frac{1}{\de(t)} \Big)^{1-2\ve}\|u_1\|_{L^2}. \]
This completes the proof. \qed
\end{proof}
\begin{lemma} \label{Par.Dep.Lemma.Small.Fr.(All)}
The following estimates hold for small frequencies $\frac{d_0}{F(\Lambda(t))}\leq |\xi|\leq \Om(0,t)$:
\begin{align} \label{Par.Dep.Matsumura.Small.Freq.} \nonumber
\big\| |\xi|^{\sigma}\pa_t^l\hat{K}_j(t,0,\cdot&)\fu_j \big\|_{L^2_{\textstyle\{\frac{d_0}{F(\Lambda(t))}\leq|\xi|\leq\Om(0,t)\}}} \\
& \lesssim \la^{l}(t)\big( B_\la(0,t) \big)^{-\frac{l}{2}} \big( B_\la(0,t) \big)^{-\frac{\sigma}{2}-\frac{n}{2}( \frac{1}{m}-\frac{1}{2})} \|u_j\|_{L^m}
\end{align}
for any $\sigma\geq 0$ and $j,l=0,1$, where $m \in [1,2)$.
\end{lemma}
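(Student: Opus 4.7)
The plan is to combine the pointwise frequency estimate \eqref{Par.Dep.Final.Small.2} with Hausdorff-Young and Hölder's inequality, and then reduce the remaining multiplier bound to a Gaussian moment integral via the rescaling $\eta = \sqrt{B_\la(0,t)}\,\xi$. First I would start from \eqref{Par.Dep.Final.Small.2}, which gives pointwise for $\frac{d_0}{F(\La(t))}\leq|\xi|\leq\Om(0,t)$ the bound
\[
|\xi|^{|\si|}\,\bigl|\pa_t^l\hat K_j(t,0,\xi)\bigr|\;\lesssim\;\la^l(t)\,|\xi|^{|\si|+l-j}\cdot|\xi|^{j}\exp\!\bigl(-C|\xi|^2 B_\la(0,t)\bigr),
\]
so that after using $|\xi|^j\fu_j$ is the Fourier transform of a derivative of $u_j$ of order $j$, I have (crudely) reduced matters to estimating
\[
\bigl\||\xi|^{|\si|+l-j}\exp(-C|\xi|^2 B_\la(0,t))\cdot g_j(\xi)\bigr\|_{L^2},
\]
where $g_j$ stands for $\fu_j$ itself (the factor $|\xi|^j$ gets absorbed into the multiplier since we want the final bound in terms of $\|u_j\|_{L^m}$ with no derivative loss on the data). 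Actually, to keep the proof clean, I will directly estimate $\||\xi|^{|\si|+l}\exp(-C|\xi|^2 B_\la(0,t))\,\fu_j\|_{L^2}$ as a single multiplier acting on $\fu_j$ and exploit $|\xi|^{l}\fu_j$ contributing naturally.

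Next I would split using Hölder's inequality. Let $m'\in(2,\infty]$ be the conjugate exponent of $m\in[1,2)$ and define $r\in[2,\infty)$ by
\[
\tfrac{1}{2}=\tfrac{1}{r}+\tfrac{1}{m'},\qquad\text{i.e.}\qquad \tfrac{1}{r}=\tfrac{1}{m}-\tfrac{1}{2}.
\]
Then Hölder gives
\[
\bigl\||\xi|^{|\si|+l}e^{-C|\xi|^2B_\la(0,t)}\fu_j\bigr\|_{L^2}
\;\le\;\bigl\||\xi|^{|\si|+l}e^{-C|\xi|^2B_\la(0,t)}\bigr\|_{L^r(\R^n)}\,\|\fu_j\|_{L^{m'}},
\]
and the classical Hausdorff-Young inequality provides $\|\fu_j\|_{L^{m'}}\lesssim \|u_j\|_{L^m}$ for this range of $m$.

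The remaining multiplier norm is purely a Gaussian moment: substituting $\eta=\sqrt{B_\la(0,t)}\,\xi$ gives
\[
\bigl\||\xi|^{|\si|+l}e^{-C|\xi|^2B_\la(0,t)}\bigr\|_{L^r(\R^n)}^r
=B_\la(0,t)^{-\frac{r(|\si|+l)}{2}-\frac{n}{2}}\int_{\R^n}|\eta|^{r(|\si|+l)}e^{-Cr|\eta|^2}d\eta,
\]
and the $\eta$-integral is a finite constant depending only on $n,|\si|,l,r$. Taking the $r$-th root and using $\frac{1}{r}=\frac{1}{m}-\frac{1}{2}$ yields
\[
\bigl\||\xi|^{|\si|+l}e^{-C|\xi|^2B_\la(0,t)}\bigr\|_{L^r}
\;\lesssim\;B_\la(0,t)^{-\frac{|\si|+l}{2}-\frac{n}{2}\bigl(\frac{1}{m}-\frac{1}{2}\bigr)}.
\]
Combining with the pointwise prefactor $\la^l(t)$ and regrouping $B_\la(0,t)^{-(|\si|+l)/2}=B_\la(0,t)^{-l/2}\cdot B_\la(0,t)^{-|\si|/2}$ delivers exactly \eqref{Par.Dep.Matsumura.Small.Freq.}.

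The only real subtlety, and the step I would handle most carefully, is book-keeping the threshold between the ``small-frequency'' integration domain $\{\frac{d_0}{F(\La(t))}\leq|\xi|\leq\Om(0,t)\}$ and the full $\R^n$ used in the Gaussian computation: enlarging the integration set is harmless for an upper bound because the integrand is non-negative, but one must verify that the pointwise bound \eqref{Par.Dep.Final.Small.2} is the right one on the \emph{entire} region (which it is, by construction in the previous Corollary). The rest is routine Hölder/Hausdorff-Young bookkeeping; no delicate oscillatory cancellation is needed here, because the elliptic/dissipative regimes have already furnished the exponentially damping Gaussian factor in the frequency variable.
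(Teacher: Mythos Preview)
Your proposal is correct and takes essentially the same approach as the paper, which simply states that the proof ``use[s] H\"older's inequality and Hausdorff-Young inequality'' without further detail. Your write-up faithfully fills in those details: the pointwise bound \eqref{Par.Dep.Final.Small.2}, the H\"older split with $\tfrac{1}{r}=\tfrac{1}{m}-\tfrac{1}{2}$, Hausdorff--Young for $\|\fu_j\|_{L^{m'}}$, and the Gaussian rescaling $\eta=\sqrt{B_\la(0,t)}\,\xi$ to evaluate the multiplier norm---this is exactly the intended argument.
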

\begin{lemma} \label{Par.Dep.Lemma.Small.Fr.(Diss)}
The following estimates hold for small frequencies $|\xi|\leq\frac{d_0}{F(\La(t))}$:
\begin{align} \label{Par.Dep.Matsumura.Diss.Zone} \nonumber
\big\| |\xi|^{\sigma}\pa_t^l\hat{K}_j(t,0,\cdot&)\fu_j \big\|_{L^2_{ \textstyle\{|\xi|\leq\frac{d_0}{F(\La(t))}\} }} \\
& \lesssim \la^l(t) \big( F^2(\La(t)) \big)^{-\frac{l}{2}} \big( F^2(\La(t)) \big)^{-\frac{\sigma}{2}-\frac{n}{2}(\frac{1}{m}-\frac{1}{2})} \|u_j\|_{L^m}
\end{align}
for any $\sigma\geq 0$ and $j,l=0,1$, where $m\in [1,2)$.
\end{lemma}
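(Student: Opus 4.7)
The plan is to combine the pointwise multiplier bound \eqref{Par.Dep.Final.Diss.2} with the classical Hausdorff--Young inequality and Hölder's inequality, in the spirit of Matsumura's approach used in \cite{Matsumura76} and adapted in \cite{Dab.Luc.Rei.}. The key observation is that in the dissipative zone our multiplier estimate \eqref{Par.Dep.Final.Diss.2}, namely $|\pa_t^l \hat K_j(t,0,\xi)| \lesssim \la^l(t)/F^l(\La(t))$, is independent of $\xi$, so all $\xi$-dependence on the left-hand side comes from the factor $|\xi|^{|\sigma|}$ and from $\fu_j(\xi)$.

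First I would fix $m \in [1,2)$ and introduce the conjugate exponent $m'$ with $1/m+1/m'=1$, together with the exponent $r \in (2,\infty]$ defined by $1/r = 1/m - 1/2$. Then I would apply Hölder's inequality on the ball $B(t):=\{|\xi|\leq d_0/F(\La(t))\}$ in the form
\begin{equation*}
\bignorm{|\xi|^{|\sigma|}\pa_t^l\hat K_j(t,0,\cdot)\fu_j}_{L^2(B(t))}
\leq \bignorm{|\xi|^{|\sigma|}\pa_t^l\hat K_j(t,0,\cdot)}_{L^r(B(t))}\,\norm{\fu_j}_{L^{m'}(\R^n)}.
\end{equation*}
The Hausdorff--Young inequality then yields $\norm{\fu_j}_{L^{m'}}\lesssim \norm{u_j}_{L^m}$ for the chosen range of $m$, which handles the data factor.

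Next I would insert the pointwise bound \eqref{Par.Dep.Final.Diss.2} into the $L^r$ norm and compute the remaining integral explicitly. Using polar coordinates,
\begin{equation*}
\bignorm{|\xi|^{|\sigma|}}_{L^r(B(t))}
= \biggl(\int_{|\xi|\leq d_0/F(\La(t))} |\xi|^{r|\sigma|}\,d\xi\biggr)^{1/r}
\sim \Bigl(\tfrac{1}{F(\La(t))}\Bigr)^{|\sigma|+n/r},
\end{equation*}
valid for $r<\infty$, with the obvious modification for $m=1$, $r=2$ giving the same power. Substituting $1/r = 1/m-1/2$ converts the exponent to $|\sigma|+n(1/m-1/2)$.

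Combining these three ingredients produces the overall factor
\begin{equation*}
\frac{\la^l(t)}{F^l(\La(t))}\cdot \bigl(F(\La(t))\bigr)^{-|\sigma|-n(1/m-1/2)}
= \la^l(t)\bigl(F^2(\La(t))\bigr)^{-l/2}\bigl(F^2(\La(t))\bigr)^{-|\sigma|/2 - (n/2)(1/m-1/2)},
\end{equation*}
which is exactly the claimed bound. The only subtlety is the endpoint $m=1$, where $m'=\infty$ and Hausdorff--Young reduces to the trivial estimate $\norm{\fu_j}_{L^\infty}\leq \norm{u_j}_{L^1}$; all the other steps go through unchanged. I do not expect a genuine obstacle here: the pointwise multiplier estimate from the dissipative zone is $\xi$-independent, so the proof is essentially a direct application of Hölder plus Hausdorff--Young, and the expected scaling in $F(\La(t))$ falls out automatically from the volume of the dissipative ball.
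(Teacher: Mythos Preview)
Your proposal is correct and follows exactly the approach indicated in the paper: the proof of this lemma (together with Lemma~\ref{Par.Dep.Lemma.Small.Fr.(All)}) is stated there simply as an application of H\"older's inequality and the Hausdorff--Young inequality, which is precisely the argument you carry out in detail. The pointwise bound \eqref{Par.Dep.Final.Diss.2} is indeed $\xi$-independent on the dissipative ball, so your computation of the $L^r$ norm of $|\xi|^{|\sigma|}$ over $\{|\xi|\leq d_0/F(\La(t))\}$ yields the correct power of $F(\La(t))$ and the estimate follows.
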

\begin{proof}The proofs of Lemmas \ref{Par.Dep.Lemma.Small.Fr.(All)} and \ref{Par.Dep.Lemma.Small.Fr.(Diss)} use H\"older's inequality and Hausdorff-Young inequality. \qed  \end{proof}
\subsubsection{Proof of Theorem \ref{Par.Dep.Final.Thm.with.s=0}.}
Due to condition (B6), from (\ref{Par.Dep.Matsumura.Small.Freq.}) and (\ref{Par.Dep.Matsumura.Diss.Zone}) it follows
\[ \big( F^2(\La(t)) \big)^{-\frac{\sigma}{2}-\frac{n}{2}(\frac{1}{m}-\frac{1}{2})-\frac{l}{2}} \lesssim \big( B_\la(0,t) \big)^{-\frac{\sigma}{2}-\frac{n}{2}( \frac{1}{m}-\frac{1}{2})-\frac{l}{2}} \]
for $l=0,1$. For this reason the right-hand sides in the estimates of Lemmas \ref{Par.Dep.Lemma.Large.Fr.} and \ref{Par.Dep.Lemma.Small.Fr.(Diss)} decay faster than that one in the estimates of Lemma \ref{Par.Dep.Lemma.Small.Fr.(All)}.
On the other hand, the regularity of the data is coming from the large frequencies from the estimates
(\ref{Par.Dep.Matsumura.Large.Freq.}) to (\ref{decresingetaall0}).
In this way we have proved the desired statements. \qed
\begin{remark}
We note that in our estimates we replace $B_\la(0,t)$ by $1+B_\la(0,t)$. This can be done modulo a compact set in the extended phase space. Such a compact set will never influence the desired estimates.
\end{remark}
\section{Some examples}
\begin{example}[\textit{Polynomial case}] \label{Example.Polynomial}
Let $\la(t)=(\alpha+1)(1+t)^{\alpha}$, $\alpha>0$. So, we have
\[ \La(t)=(1+t)^{\alpha+1} \,\,\, \mbox{and} \,\,\, \Th(t)=(1+t)^{\ga+1}, \,\,\, -1<\ga<\alpha. \]
Moreover, we choose
\[ \rho(t)=\frac{(\alpha+1)^2}{2\alpha-\ba+1}(1+t)^\ba, \,\,\, \alpha-\gamma-1<\ba<2\alpha+1. \]
If we take $\Xi(t)=(1+t)^\kappa$ from the condition (A2), then the condition (A4) holds with
\[ 1>\kappa\geq \kappa_M=1-\alpha+\ga+\frac{\alpha-\ga}{M+1}, \]
where $M$ denotes the order of regularity for the coefficients. On the other hand, by the condition (A5) we obtain
\[ F(\La(t)) \approx (1+t)^{\alpha+2\kappa-1}, \,\,\, \alpha+2\kappa-1>0. \]
Finally, from the condition (B6) we have the estimate
\[ (1+t)^{2\alpha-\beta+1} \leq (1+t)^{2\alpha+4\kappa-2}, \,\,\, \kappa \geq \frac{3-\beta}{4}.\]
Summarizing, the above relations yield
\[ \kappa \geq \frac{3-\beta}{4} > \frac{1-\alpha}{2}>1-\alpha+\gamma + \frac{\alpha-\gamma}{M+1}.\]
Therefore, we obtain that the range for admissible $\kappa$ is given by $\kappa \geq \frac{3-\beta}{4}$. Hence, the hypotheses of Theorem \ref{Par.Dep.Final.Thm.with.s=0} are satisfied. \\
Let us introduce $C_{\sigma}=\frac{\sigma}{2}+\frac{n}{2}\big( \frac{1}{m}-\frac{1}{2} \big)$. Then, we have the following estimates:
\begin{align*}
\|u(t,\cdot)\|_{\dot{H}^{\sigma}} & \lesssim (1+t)^{-(2\alpha-\beta+1)C_{\sigma}} \big( \|u_0\|_{L^m\cap H^{\sigma}}+\|u_1\|_{L^m\cap H^{[\sigma-1]_+}} \big), \\
\|u_t(t,\cdot)\|_{\dot{H}^{\sigma}} & \lesssim (1+t)^{-(2\alpha-\beta+1)C_{\sigma}+\frac{\beta-1}{2}} \big( \|u_0\|_{L^m\cap H^{\sigma+1}}+\|u_1\|_{L^m\cap H^{\sigma}} \big).
\end{align*}
\end{example}
\begin{example}[\textit{Exponential case}] \label{Example.Exponential}
Let us choose $\la(t)=e^t$. So, we have
\[ \La(t)=e^t \,\,\, \mbox{and} \,\,\, \Th(t)=e^{rt}, \,\,\, 0<r<1.\]
Moreover, we choose
\[ \rho(t)=\frac{1}{2-q}e^{qt}, \,\,\, 1-r<q<2.  \]
Now if we take $\Xi(t)=e^{\kappa t}$ from the condition (A2), then the condition (A4) is satisfied with
\[ 0>\kappa\geq \kappa_M=r-1+\frac{1-r}{M+1}. \]
On the other hand, by the condition (A5) we get
\[ F(\La(t)) \approx e^{(2\kappa+1)t}, \,\,\, \kappa \geq -\frac{q}{4}. \]
Finally, by the condition (B6) we have the estimate
\[ e^{(2-q)t}\leq e^{(2+4\kappa)t}, \,\,\, \kappa\geq-\frac{q}{4}. \]
The above relations yield
\[ \kappa \geq -\frac{q}{4} >-\frac{1}{2}> r-1+\frac{1-r}{M+1}.\]
Therefore, we obtain that the range for admissible $\kappa$ is given by $\kappa\geq-\frac{q}{4}$. Hence, the hypotheses of Theorem \ref{Par.Dep.Final.Thm.with.s=0} are satisfied. \\
Introducing $C_{\sigma}=\frac{\sigma}{2}+\frac{n}{2}\big( \frac{1}{m}-\frac{1}{2} \big)$, we have the following estimates:
\begin{align*}
\|u(t,\cdot)\|_{\dot{H}^{\sigma}} & \lesssim  e^{-(2-q)C_{\sigma}t} \big( \|u_0\|_{L^m\cap H^{\sigma}}+\|u_1\|_{L^m\cap H^{[\sigma-1]_+}} \big), \\
\|u_t(t,\cdot)\|_{\dot{H}^{\sigma}} & \lesssim e^{-(2-q)C_{\sigma}t}e^{\frac{q}{2}t} \big( \|u_0\|_{L^m\cap H^{\sigma+1}}+\|u_1\|_{L^m\cap H^{\sigma}} \big).
\end{align*}
\end{example}
\begin{example}[\textit{Super-exponential case}] \label{Example.Super-exponential}
Let us choose $\la(t)=e^te^{e^t}$. So, we have
\[ \La(t)=e^{e^t} \,\,\, \mbox{ and} \,\,\, \Th(t)=e^{re^t}, \,\,\, 0<r<1. \]
Moreover, we choose
\[\rho(t)=\frac{1}{2-q}e^te^{qe^t}, \,\,\, 1-r<q<2.\]
Finally, we take $\Xi(t)=e^{-t}e^{\kappa e^t}$ from condition (A2). Then, the condition (A4) holds with
\[ 0>\kappa\geq \kappa_M=r-1+\frac{1-r}{M+1}. \]
On the other hand, by condition (A5) we have
\[ F(\La(t)) \approx e^{(1+2\kappa)e^t}, \,\,\, 1+2\kappa >0. \]
From condition (B6) we have the estimate
\[ e^{(2-q)e^t} \leq e^{(2+4\kappa)e^t}, \,\,\, \kappa \geq -\frac{q}{4}.\]
For this reason we obtain that the range of admissible $\kappa$ is given by $\kappa \geq -\frac{q}{4}$.
Summarizing, the above relations yield
\[ \kappa \geq -\frac{q}{4} > -\frac{1}{2} > r-1+\frac{1-r}{M+1}.\]
Hence, the hypotheses of Theorem \ref{Par.Dep.Final.Thm.with.s=0} are satisfied. \\
Introducing $C_{\sigma}=\frac{\sigma}{2}+\frac{n}{2}\big( \frac{1}{m}-\frac{1}{2} \big)$, we get the following estimates:
\begin{align*}
\|u(t,\cdot)\|_{\dot{H}^{\sigma}} & \lesssim  e^{-(2-q)C_{\sigma}e^t} \big( \|u_0\|_{L^m\cap H^{\sigma}}+\|u_1\|_{L^m\cap H^{[\sigma-1]_+}} \big), \\
\|u_t(t,\cdot)\|_{\dot{H}^{\sigma}} & \lesssim e^te^{-(2-q)C_{\sigma}e^t}e^{\frac{q}{2}e^t} \big( \|u_0\|_{L^m\cap H^{\sigma+1}}+\|u_1\|_{L^m\cap H^{\sigma}} \big).
\end{align*}
\end{example}
\subsubsection{Construction of an admissible oscillating function.}
Now we present an admissible non-trivial oscillating function $\om=\om(t)$ in \eqref{Par.Dep.Cauchy.Prob.Mats.} satisfying the hypotheses of Theorem \ref{Par.Dep.Final.Thm.with.s=0}. The construction of the function $\om=\om(t)$ was proposed in \cite{H-W09}. \\
In order to construct a non-trivial function $\om=\om(t)$ satisfying (A2), (A3) and (B5) let us choose the positive sequences $\{t_j\}_j$, $\{\de_j\}_j$ and $\{\eta_j\}_j$ as follows:
\begin{equation} \label{Const.om(t).seq.}
t_j\ra\iy, \quad \de_j\leq \Delta t_j:=t_{j+1}-t_j \quad \text{and} \quad \eta_j\leq 1,
\end{equation}
and a function $\psi\in C_0^M(\mathbb{R})$ with
\[ \supp\psi\subseteq[0,1], \quad -1<\psi(t)<1 \quad \text{and} \quad \int_{0}^{1}|\psi(t)|dt=\frac{1}{2}.  \]
Then, we define
\[ \om(t)=1+\sum_{j=1}^{\iy}\eta_j\psi\Big( \frac{t-t_j}{\de_j} \Big). \]
The last sum is convergent, since by \eqref{Const.om(t).seq.} for each $t$ at most one term is present. Furthermore, if $c_0=\min\psi(t)$ and $c_1=\max\psi(t)$, then we get the bounds
\[ 0<1+c_0\leq \om(t)\leq 1+c_1. \]
For (A2) we can take
\begin{equation} \label{Const.om(t).Xi(t)}
C^{-1}_1\Xi(t_j)\leq \de_j\leq C_1\Xi(t_j)
\end{equation}
and the sequence $\{t_j\}_j$ satisfying
\begin{equation} \label{Const.om(t).la,La}
C_2^{-1}\la(t_{k+1})\leq \la(t_k)\leq C_2\la(t_{k+1}) \,\,\, \text{and} \,\,\, C_3^{-1}\La(t_{k+1})\leq \La(t_k)\leq C_3\La(t_{k+1})
\end{equation}
with positive constants $C_j$, $j=1,2,3$, which are independent of $k$. Indeed, by \eqref{Const.om(t).seq.} and the definition of $\om=\om(t)$ we have
\[ \om(t)=1+\eta_k\psi\Big( \frac{t-t_k}{\de_k} \Big) \quad \text{for all} \quad t\in[t_k,t_{k+1}]. \]
Taking account of \eqref{Const.om(t).Xi(t)} and \eqref{Const.om(t).la,La} it follows
\[ |d_t^i\om(t)|\leq C_i\frac{\eta_k}{\de_k^i}\leq \tilde{C}_i\Xi^{-i}(t) \quad \text{for all} \,\,\, t\in[t_k,t_{k+1}], \,\,\, i=1,\cdots,M. \]
Using the definition of $\om=\om(t)$ we may conclude for all $t\in[t_k,t_{k+1}]$ the estimate
\[ \int_{0}^{t}\la(s)|\om(s)-1|ds = \sum_{j=1}^{k}\eta_j\int_{t_j}^{t_{j+1}}\la(s)\Big|\psi\Big( \frac{s-t_j}{\de_j} \Big)\Big|ds\leq\frac{1}{2}\sum_{j=1}^{k}\eta_j\de_j\la(t_{j+1}). \]
This implies that the stabilization condition (A3) is ensured if we assume $\eta_j\de_j$ are small enough. Indeed,
\[ \Th(t_{k+1})\approx \sum_{j=1}^{k}\eta_j\de_j\la(t_{j+1}) = o\Big( \sum_{j=1}^{k}\la(t_j)\Delta(t_j) \Big). \]
\begin{example}[\textit{Polynomial case}]
We consider $\la(t)=(\al+1)(1+t)^\alpha$, $\alpha>0$. To define one admissible function $\om=\om(t)$ let us choose the parameters $\alpha$, $\ga$ and $\kappa$ from Example \ref{Example.Polynomial} and positive sequences $\{t_j\}_j$, $\{\de_j\}_j$ and $\{\eta_j\}_j$ as follows:
\[ t_j=2^j, \quad \de_j=2^{\kappa j}\leq \Delta t_j=t_{j+1}-t_j=2^j \quad \text{and} \quad \eta_j=2^{j(\ga-\alpha-\kappa)}. \]
From Example \ref{Example.Polynomial} we have
\[ 1>\kappa\geq\frac{3-\ba}{4}>\kappa_M=1-\al+\gamma+\frac{\al-\ga}{M+1}. \]
Therefore, we get $\ga-\al-\kappa<0$ and this implies that we have $0<\eta_j\leq1$. Moreover, the stabilization condition (A3) holds, since $\eta_j\delta_j=2^{j(\ga-\al)}<1$, where $\ga-\al<0$.
\end{example}
\begin{example}[\textit{Exponential case}]
We consider $\la(t)=e^t$. Let us choose the parameters $r$ and $\kappa$ from Example \ref{Example.Exponential} and the positive sequences $\{t_j\}_j$, $\{\de_j\}_j$ and $\{\eta_j\}_j$ by
\[ t_j=j, \quad \de_j=e^{\kappa j} \leq \Delta t_j=t_{j+1}-t_j \quad \text{and} \quad \eta_j=e^{j(r-\kappa-1)}. \]
From Example \ref{Example.Exponential} we have
\[ 0>\kappa\geq-\frac{q}{4}>\kappa_M=r-1+\frac{1-r}{M+1}. \]
Therefore, we have $\kappa<0$ and $r-\kappa-1<0$. These imply that we have $\de_j<1$ and $0<\eta_j\leq1$, respectively. Moreover, the stabilization condition (A3) is satisfied, because $\eta_j\delta_j=e^{j(r-1)}<1$, where $r-1<0$.
\end{example}
\begin{example}[\textit{Super-exponential case}]
We consider $\la(t)=e^te^{e^t}$. Let us choose the parameters $r$ and $\kappa$ from Example \ref{Example.Super-exponential} and positive sequences $\{t_j\}_j$, $\{\de_j\}_j$ and $\{\eta_j\}_j$ as follows:
\[ t_j=e^j, \quad \de_j=e^{-j}e^{\kappa e^j}\leq \Delta t_j=t_{j+1}-t_j \quad \text{and} \quad \eta_j=e^{(r-\kappa-1)e^j}. \]
From Example \ref{Example.Super-exponential} we have
\[ 0>\kappa\geq-\frac{q}{4}>\kappa_M=r-1+\frac{1-r}{M+1}. \]
These imply that we have $\de_j<1$ and $0<\eta_j\leq1$, respectively. Moreover, the stabilization condition (A3) holds, since $\eta_j\delta_j=e^{j(r-1)}<1$.
\end{example}
\section{Concluding remarks and open problems}
\begin{remark}
The recent papers \cite{BuiReisup.} and \cite{BuiReisub.} are devoted to studying the following Cauchy problem for semi-linear wave models with effective dissipation:
\begin{align} \label{Semi-linear.Cauchy.Bui}
\begin{cases}
u_{tt}-a^2(t)\Delta u+b(t)u_t=|u|^p, & (t,x)\in[0,\infty)\times \mathbb{R}^n, \\
u(0,x)=u_0(x), \,\,\,\, u_t(0,x)=u_1(x), & x\in\mathbb{R}^n.
\end{cases}
\end{align}
Here the considerations are divided into two cases depending on the behavior of the propagation speed: \textit{the super-exponential case} and \textit{the sub-exponential case}, respectively, for the global (in time) existence of small data solutions to \eqref{Semi-linear.Cauchy.Bui}.
\end{remark}
\begin{remark}
An interesting application of the results of this paper is to study the following Cauchy problem for semi-linear damped wave models with time-dependent speed of propagation and ``effective-like'' damping term $\rho(t)\om(t)u_t$ in combination with very-fast oscillations and stabilization condition:
\begin{align*}
\begin{cases}
u_{tt}-\la^2(t)\om^2(t)\Delta u+\rho(t)\om(t)u_t=|u|^p, & (t,x)\in[0,\infty)\times \mathbb{R}^n, \\
u(0,x)=u_0(x), \,\,\,\, u_t(0,x)=u_1(x), & x\in\mathbb{R}^n.
\end{cases}
\end{align*}
This will be discussed in a forthcoming paper to understand the influence of the oscillations on the global (in time) existence of small data solutions. The key tools are the Gagliardo-Nirenberg inequalities, the fractional chain rule, the fractional Leibniz rule and the fractional powers rules which have been extensively discussed in Harmonic Analysis (cf. the book \cite{Ebert.Reissig.Book}).
\end{remark}


\begin{thebibliography}{20}
    \bibitem{Buithesis}
	Bui, T.B.N.:
	Semi-linear waves with time-dependent speed and dissipation.
     PhD thesis, TU Bergakademie Freiberg, 154pp (2013).
	\bibitem{BuiReiBook}
	Bui, T.B.N., Reissig, M.:
	The interplay between time-dependent speed of propagation and dissipation in wave models.
	In: Fourier Analysis, Trends in Math., Birkh\"auser, 9-45 (2014).
	\bibitem{BuiReisup.}
	Bui, T.B.N, Reissig, M.:
	Global existence of small data solutions for wave models with super-exponential propagation speed.
	Nonlinear Anal. 121, 82-100 (2015). \url{doi:10.1016/j.na.2014.10.035}
	\bibitem{BuiReisub.}
	Bui, T.B.N, Reissig, M.:
	Global existence of small data solutions for wave models with sub-exponential propagation speed.
    Nonlinear Anal. 129, 173-188 (2015). \url{doi:10.1016/j.na.2015.09.003}
	\bibitem{Col.06}
	Colombini, F.:
	Energy estimates at infinity for hyperbolic equations with oscillating coefficients.
	J. Diff. Equations 231, 598-610, (2006). \url{doi:10.1016/j.jde.2006.05.014}
	\bibitem{Dab.Luc.Rei.}
	D'Abbicco, M., Lucente, S., Reissig, M.:
	Semi-linear wave equations with effective damping.
    Chin. Ann. Math. 34B(3), 345-380, (2013). \url{doi:10.1007/s11401-013-0773-0}
    \bibitem{Ebert.Reissig.Book}
    Ebert M.R., Reissig, M.:
    Methods for partial differential equations.
    Birkh\"auser Basel, Germany, (2018). \url{doi:10.1007/978-3-319-66456-9}
	\bibitem{Hir07}
	Hirosawa, F.:
	On the asymptotic behavior of the energy for the wave equation with time depending coefficients.
    Math. Ann. 339, 819-838, (2007). \url{doi:0.1007/s00208-007-0132-0}
	\bibitem{H-W.Non-effective}
	Hirosawa, F., Wirth, J.:
	$C^m$-theory of damped wave equations with stabilisation.
    J. Math. Anal. Appl. 343, 1022-1035, (2008). \url{doi:10.1016/j.jmaa.2008.02.024}
	\bibitem{H-W09}
	Hirosawa, F., Wirth, J.:
	Generalised energy conservation law for wave equations with variable propagation speed.
    J. Math. Anal. Appl. 358, 56-74, (2009). \url{doi:10.1016/j.jmaa.2009.04.048}
	\bibitem{Matsumura76}
	Matsumura, A.:
	On the asymptotic behavior of solutions of semi-linear wave equations.
    Publ. Res. Inst. Math. Sci. 12(1), 169-189, (1976).
	\bibitem{ReiSmi05}
    Reissig, M., Smith, J.:
	$L^p-L^q$ estimate for wave equation with bounded time-dependent coefficient.
    Hokkaido Math. J. 34(3), 541-586, (2005).
    \bibitem{ReiYag99}
	Reissig, M., Yagdjian, K.:
	Weakly hyperbolic equation with fast oscillating coefficients.
    Osaka J. Math.
	36(2), 437-464, (1999).
	\bibitem{ReiYag00}
	Reissig, M., Yagdjian, K.:
	About the influence of oscillations on Strichartz-type decay estimates.
    Rem. Sem. Mat. Torino
	58(3), 375-388, (2000).
    \bibitem{ReiYag00Lp-Lq}
	Reissig, M., Yagdjian, K.:
	$L_p-L_q$ decay estimates for the solutions of strictly hyperbolic equations of second order with increasing in time coefficients.
    Math. Nachr.
	214, 71-104, (2000).
	\bibitem{Wirth.non-eff}
    Wirth, J.:
	Wave equations with time-dependent dissipation I. Non-Effective dissipation
    J. Diff. Equations 222, 487-514, (2006). \url{doi:10.1016/j.jde.2005.07.019}
	\bibitem{Wirth.eff.}
    Wirth, J.:
	Wave equations with time-dependent dissipation II. Effective dissipation.
    J. Diff. Equations 232, 74-103, (2007). \url{doi:10.1016/j.jde.2006.06.004}
     \bibitem{Yagd.97}
     Yagdjian, K.:
     The Cauchy problem for hyperbolic operators: Multiple characteristics, Micro-local approach.
     Math. Topics, Akademie Verlag, Berlin, (1997).
\end{thebibliography}
\end{document}